\newcommand{\bb}[1]{\mathbb{#1}}
\newcommand{\R}{\mathbb{R}}
\newcommand{\N}{\mathbb{N}}
\newcommand{\C}{\mathbb{C}}
\newcommand{\1}{\mathds{1}}
\newcommand{\Ai}{\text{Ai}}
\newcommand{\cov}[0]{\text{Cov}}
\newcommand{\var}[0]{\text{Var}}
\newcommand{\argmax}{\text{arg}\,\text{max}}
\newtheorem{theorem}{Theorem}
\newtheorem{proposition}{Proposition}[section]
\newtheorem{corollary}[proposition]{Corollary}
\newtheorem{lemma}[proposition]{Lemma}
\numberwithin{equation}{section}
\theoremstyle{definition}
\newtheorem*{definition}{Definition}
\theoremstyle{remark}
\theoremstyle{definition}
\theoremstyle{definition}
\newtheorem{remark}{Remark}[section]
\title{On the Hard Edge Limit of the Zero Temperature Laguerre $\beta$-Corners Process}
\author{Matthew Lerner-Brecher}
\begin{document}
\maketitle
\begin{abstract}
We study the hard edge limit of a multilevel extension of the Laguerre $\beta$-ensemble at zero temperature. In particular, we show that asymptotically the ensemble is given by Gaussians with covariance matrix expressible in terms of the Fourier-Bessel series. These Gaussians also have an explicit representation as the partition functions of additive polymers arising from a random walk on roots of the Bessel functions. Our approach builds off of the one introduced in \cite{GK} and is rooted in using the theory of dual and associated polynomials to diagonalize transition matrices relating levels of the ensemble.
\end{abstract}

\tableofcontents

\section{Introduction}

\subsection{Background}
The general $\beta$-ensemble is a distribution on decreasing tuples $(x_1,\ldots, x_n)$ of $n$ reals with density of the form
\begin{equation} \label{eq:general_beta_dist}
\frac{1}{Z} \prod_{i=1}^n w(x_i) \prod_{1 \le i < j \le n} |x_i - x_j|^{\beta},
\end{equation}
where $Z$ is a normalizing constant and $w$ is some positive-valued weight function. Such ensembles have a rich history of study on account of their connections to both statistical mechanics and random matrix theory. In the former, \eqref{eq:general_beta_dist} describes both the distribution of particles in a Coulomb log-gas system and the squared ground state wave function of the Calogero-Sutherland model (see \cite{Ox} for a primer). Correspondingly, $\beta$ is known as the \textit{inverse temperature} of the model. \par 
In random matrix theory, the impetus for studying these distributions comes from three special cases referred to as the \textit{classical ensembles}, which are defined by
\[ 
w(x) = \begin{cases} \exp\left(-\beta x^2/4\right) & \text{ Hermite/Gaussian Ensemble} \\
x^{\beta(\alpha+1)/2-1}\exp(-\beta x/2) & \text{ Laguerre Ensemble} \\
x^{\beta(p+1)/2-1}(1-x)^{\beta(q+1)/2-1} & \text{ Jacobi Ensemble}
\end{cases} 
\]
Here $\alpha, p, q > -1$, and the respective supports of $w(x)$ are $\R, \R_{>0}, (0,1)$. Each of the classical ensembles can be realized as the distribution of the eigenvalues of a random tridiagonal matrix \cite{ED, KN}. This tridiagonal approach is a more recent innovation however, and historically, the interest in these three ensembles comes from alternative matrix distributions in the cases $\beta = 1,2,4$. To describe these, let $X$ be an $(n+\alpha) \times  n$ matrix, whose entries are independent standard normal Gaussians and either real, complex, or quaternionic corresponding to $\beta = 1,2,4$ respectively. The distribution of the squared singular values of $X$ has density given by the Laguerre ensemble, and when $\alpha = 0$, the distribution of the eigenvalues of the Hermitian matrix $\frac{1}{2}(X+X^*)$ has density given by the Hermite ensemble. The realization of the Jacobi ensemble is less relevant to our discussion, but can be found in \cite{For10} for the interested reader. 
\par 
Motivated by these matrix distributions at $\beta = 1,2,4$, one can extend the single level general $\beta$-ensemble to a multilevel ensemble. Indeed, if one sets $\lambda^k = (\lambda_{1,k},\ldots, \lambda_{k,k})$ to be the decreasing eigenvalues of the top left $k \times k$ corner of $\frac{1}{2}(X+X^*)$ (resp. $X^*X$), then for each $k$, $\lambda^k$ will be an instance of the Hermite (resp. Laguerre) ensemble. The joint distribution of $\lambda^1,\cdots, \lambda^n$ will be referred to as a \textit{$\beta$-corners process} (or $\beta$-minors process) and each individual $\lambda^i$ as a level of the ensemble. These multilevel distributions have been the attention of much recent study due to their connection to tiling models and symmetric polynomials. For the former, consider tiling a regular hexagon by lozenges (rhombuses) whose sides have equal length and are parallel to those of the hexagon. If one selects a tiling of this hexagon uniformly at random, then with probability approaching one as the length of the lozenges goes to zero, the tiling will be separated into different regions by a deterministic curve. In a ``frozen'' region, only one type of lozenge will occur, whereas in a ``liquid'' region all types occur with varying densities. The Gaussian corners process with $\beta = 2$ describes the asymptotic behavior of the tiling at a point of tangency between the boundary of the domain and the deterministic curve. See \cite{OR, JN} for the original appearance of GUE corners in tiling models and \cite{AG} for figures and a generalization of this result to domains other than a regular hexagon.
\par 
On the algebraic side, corners processes have a close relationship to Macdonald polynomials and their degenerations to the Heckman-Opdam and multivariate Bessel functions \cite{BC14}. Notably, these ideas were exploited in \cite{GM} to obtain expected characteristic polynomials for sums and products of random matrices and in \cite{BG15} to prove that the global fluctuations of the multilevel Jacobi ensemble are described by the Gaussian Free Field. The density of the Laguerre $\beta$-corners process will be presented in Proposition \ref{prop:laguerre_infinity_corners}; for the form of the density in the Hermite, Jacobi, and general cases see the aforementioned papers \cite{GM, BG15} and for the original study on the joint distribution of matrix corners see \cite{Ner}. All of these distributions can be naturally interpolated to $\beta$ other than 1,2,4 allowing one to define multilevel $\beta$-ensembles in general.
\par 
Unfortunately, for general $\beta$ these multilevel ensembles are more difficult to study, in part because tridiagonal approach of \cite{ED} is no longer applicable. However, recent work indicates that there may be another special $\beta$ value as amenable to study and relevant to the general theory as $\beta = 1,2,4$: the zero temperature limit $\beta\to\infty$. In addition to having significance from a statistical mechanics standpoint, this limit exhibits crystallization to a deterministic lattice, Gaussian fluctuations about said lattice \cite{GM}, and single level covariance matrix diagonalizable by orthogonal polynomials \cite{V, AV}. Additionally, \cite{ED04} showed that one can well-approximate the Hermite and Laguerre $\beta$-ensembles by Taylor expanding about the zero temperature limit (see \cite{GK} for relevant figures).
\par 
The focus of this paper will be on the zero temperature limit of these multilevel ensembles, particularly in the Laguerre case. However, before we restrict our attention to this case, there is another interesting regime we feel we should mention: the high temperature limit. The straightforward limit where $n$ is fixed and $\beta \to 0$ is less novel as the interactions between the eigenvalues disappear. Instead, attention in the literature has been focused on the case where $n\to\infty$ and $\beta n$ converges to a constant $\gamma$. In this limit, one obtains multiple law of large number results differing from the standard ones for $\beta n \to \infty$ \cite{BCG, TT}. Recent results also suggest that this high temperature limit is closely connected to the zero temperature one particularly for the Gaussian ensemble. In particular, the limiting empirical measure for the Gaussian ensemble at high temperature equals the orthogonality measure of the associated Hermite polynomials, which will play an important role for us in section \ref{sec:Q_m_properties} \cite{BCG}. Additionally, in the two regimes, one can identify parameters arising from various convolutions associated to matrix addition \cite{BCG, Xu}, and a recent paper by Forrester \cite{For22b} found an equivalence between linear statistics. See discussions in \cite{BCG, Xu} for further exploration of this topic.

\subsection{The Main Results}
When studying the extremal eigenvalues of the classical ensembles at $\beta = 1,2,4$, different behavior emerges depending on whether the eigenvalues are constrained by an endpoint of the support, as is the case at 0 for the Laguerre ensemble and 0,1 for the Jacobi ensemble, or whether they remain unbounded. The former case is referred to as a hard edge, and under proper scaling, the $n \to \infty$ limit of the correlation kernel of the eigenvalues nearest to the endpoint can be evaluated in terms of the Bessel functions. Meanwhile in the latter case, referred to as a soft edge, a similar limit yields an analogous expression involving the Airy function \cite{For10}. In addition to being of general interest in random matrix theory, see e.g. \cite{Ram, CMS13}, edge limits play an important role in quantum chromodynamics, see e.g. \cite{SV93, DHCR}; statistical mechanics, see e.g. \cite{LDS, Seo}; and numerical analysis, see e.g. \cite{EGP}.
\par 
In this paper we follow the lead of two groups of research: \cite{GK}, which studied zero temperature edge limits of the Gaussian $\beta$-corners process, and \cite{AHV,And21}, which together analyzed similar single level limits for several of the classical ensembles. In particular, this paper will study the hard edge limit of the zero temperature Laguerre $\beta$-corners processes. As the aforementioned papers, particularly the former, are vital to our methods, we will give a more in-depth discussion of them but forestall it until the next section.
\par 
To describe our results, let $X_1,X_2,\ldots,X_n$ be a sequence of matrices whose entries are independent standard normal Gaussians. As before these entries can be real, complex, or quanternionic corresponding to $\beta = 1,2,4$ respectively. We also assume $X_k$ is $N \times k$ and forms the leftmost block of $X_{k+1}$. Let $\lambda^k = (\lambda_{1,k}, \cdots, \lambda_{N \wedge k,k})$ be the nonzero squared singular values of $X_k$ listed in decreasing order where $N \wedge k = \min(N,k)$. We will refer to the joint distribution of $\lambda^1,\ldots, \lambda^n$ as the \textit{Laguerre $\beta$-corners process with $n$ rows centered at $N$}. In Section \ref{sec:zero_temp}, it will be shown that for $\beta = 1,2,4$ this process has density proportional to:
\begin{equation} \label{eq:intro_laguerre_corners_density}
\prod_{i=1}^N \lambda_{i,N}^{\frac{\beta}2-1}e^{-\frac{\beta\lambda_{i,n}}2}\prod_{1 \le i < j \le N} (\lambda_{i,n} - \lambda_{j,n}) \prod_{k=1}^{n-1}\prod_{1 \le i < j \le N \wedge k} (\lambda_{i,k} - \lambda_{j,k})^{2-\beta} \prod_{k=1}^{n-1}\prod_{a = 1}^{k \wedge N}\prod_{b=1}^{k+1 \wedge N} |\lambda_{a,k} - \lambda_{b,k+1}|^{\frac{\beta}2 - 1}.
\end{equation}
Hence, the ensemble can be naturally defined for all $\beta > 0$. Note for $k < N$, $X_k^*X_k$ forms the top $k \times k$ corner of $X_{k+1}^*X_{k+1}$ so this is indeed an extension of the corners procedure of the previous section. 
\par 
In order to state our main theorem we need a few final definitions. Let the generalized Laguerre polynomial $L^{(\alpha)}_n(x)$ be the polynomial solution to
\begin{equation} \label{eq:laguerre_diffeq}
xy'' + (\alpha + 1 - x)y' + ny = 0,
\end{equation}
with leading coefficient $\frac{(-1)^n}{n!}$.\footnote{For $\alpha \ge 0$, $\{L^{(\alpha)}_i(x)\}_{i \in \N}$ equivalently are the orthogonal polynomials with respect to the weight $x^{\alpha}e^{-x}$ on $\R_{+}$.} Additionally, define $J_s(x)$ to be the Bessel function of the first kind of order $s$; let $j_{1,s} < j_{2,s} < \cdots$ be the real roots of $J_s(x)$ for $s \ge 0$; and set $j_{i-s, s} = j_{i,-s}$ for $s < 0$. These objects are relevant to our theory as the singular value $\lambda_{k+1-i,k}$ tends to the $i$-th smallest root of the Laguerre polynomial $L^{(N-k)}_k(x)$ as $\beta\to\infty$. When $k-N$ and $i$ equal fixed constants, as $N\to\infty$, this root is approximately $\frac{j_{i,N-k}^2}{4N}$. Indeed, the reason for the offset in defining $j_{i-s, s}$ for $s<0$ is so that this approximation holds even when $N-k < 0$. We are now ready to state our main result:
\begin{theorem} \label{thm:main_hard_edge} For given positive integers $n \ge N$, let reals $\lambda_{1,k} > \lambda_{2,k} > \cdots > \lambda_{N\wedge k, k}$ be distributed so that their joint density\footnote{With respect to the Lebesgue measure} across $1 \le k \le n$ is proportional to \eqref{eq:intro_laguerre_corners_density}. Additionally, let $l_{i,k}$ denote the $i$-the largest root of the generalized Laguerre polynomial $L^{(N-k)}_k(x)$ for all $k \ge 1$. There exists a Gaussian process $\{\zeta_{a,s} : a \ge \max(-s, 1)\}$ indexed by integers $a,s$ such that in the sense of convergence of finite dimensional distributions one has
\[\lim_{N\to\infty}\lim_{\beta \to\infty} N\sqrt{\beta} (\lambda_{N+1-s-a,N-s} - l_{N+1-s-a, N-s}) = \zeta_{a,s} \qquad \qquad a \ge \max(-s, 1).\]
Here we assume $n\to\infty$ with $N$ in such a way that $(n - N) \to\infty$.\footnote{The projection of \eqref{eq:intro_laguerre_corners_density} onto $n' < n$ rows will have density given by \eqref{eq:intro_laguerre_corners_density} with $n$ replaced by $n'$. Thus the specific choice of $n \ge N$ does not affect the limit so long as all variables are well-defined.} Furthermore, this Gaussian process has explicit covariances given by 
\begin{equation} \label{eq:covariance_limit_hard_edge}
\cov(\zeta_{a,s}, \zeta_{b,t}) = \frac{j_{a,s}j_{b,t}}{2}\int_0^1 \frac{J_{s}(j_{a,s}\sqrt{1-y})}{J_s'(j_{a,s})} \frac{J_{t}(j_{b,t}\sqrt{1-y})}{J_t'(j_{b,t})} \frac{(1-y)^{|s-t|/2}}{y}dy,
\end{equation}
for all $a,s,b,t$ such that $\zeta_{a,s}, \zeta_{b,t}$ are well-defined.
\end{theorem}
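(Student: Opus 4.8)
I would take the two limits in the order they are written: first freeze $N$ (and $n$) and send $\beta\to\infty$ to reduce the problem to a Gaussian corners recursion built from Laguerre polynomials, and then send $N\to\infty$ and run a hard-edge asymptotic analysis. Since every process in sight is Gaussian, convergence of finite-dimensional distributions reduces to convergence of the means — which vanish in the limit, the lattice $l^k$ being exactly the $\beta\to\infty$ critical point, so the limit is centered — and of the covariances, and the heart of the matter is identifying the limiting covariance as \eqref{eq:covariance_limit_hard_edge}.

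\textbf{Step 1: the zero-temperature Gaussian recursion.} Using Proposition \ref{prop:laguerre_infinity_corners} and the analysis of Section \ref{sec:zero_temp}, as $\beta\to\infty$ with $N,n$ fixed each level $\lambda^k$ of \eqref{eq:intro_laguerre_corners_density} concentrates at the roots $l^k$ of $L^{(N-k)}_k$, and the rescaled fluctuations $\xi^k:=\sqrt\beta(\lambda^k-l^k)$ converge jointly to a Gaussian field that retains the Markov structure of the corners process: conditionally on $\xi^{k+1}$, the vector $\xi^k$ is Gaussian with mean $A_k\xi^{k+1}$ and conditional covariance $\Sigma_k$ not depending on $\xi^{k+1}$, the entries of $A_k,\Sigma_k$ being explicit in the values of the relevant Laguerre polynomials at the $l^k$ (obtained by Taylor-expanding $\log$ of \eqref{eq:intro_laguerre_corners_density} about the lattice and simplifying via \eqref{eq:laguerre_diffeq} and the three-term recurrence). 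Iterating the recursion over levels expresses $\xi^{N-s}$ as an explicit linear combination of independent Gaussian noises indexed by levels, so that the finite-$N$ analogue of $\cov(\zeta_{a,s},\zeta_{b,t})$ is a finite sum over levels $k$ of (propagator to level $N-s$)$\,\Sigma_k\,$(propagator to level $N-t$). The hypothesis $n-N\to\infty$, with the stabilization noted in the footnote to Theorem \ref{thm:main_hard_edge}, guarantees this has a limit independent of $n$. (This is also where the ``additive polymer'' picture of the abstract appears: solving the linear recursion writes each $\zeta_{a,s}$ as a sum over lattice paths.)

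\textbf{Step 2: diagonalization by dual and associated polynomials.} Rather than analyzing the $A_k$ directly, I would follow the scheme of \cite{GK} and conjugate them to diagonal form. The diagonalizing vectors are built from polynomials dual/associated to the Laguerre family evaluated at the roots $l^k$; invoking their properties from Section \ref{sec:Q_m_properties} — in particular the identification of their orthogonality measure, which is where the high-temperature input enters — yields closed forms for the eigenvalues of the transition operators $A_k$ and of $\Sigma_k$. After this conjugation the finite-$N$ covariance becomes a double sum, over levels $k$ and over the discrete frequency variable of the diagonalization, whose summand is an explicit product of ratios of Laguerre polynomials evaluated at Laguerre roots.

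\textbf{Step 3: hard-edge asymptotics, and the main obstacle.} Finally I would let $N\to\infty$. The asymptotic inputs are (i) convergence of the roots of $L^{(N-k)}_k$ near $0$ to scaled squared Bessel zeros together with the Mehler--Heine asymptotics $n^{-\alpha}L^{(\alpha)}_n(x/n)\to x^{-\alpha/2}J_\alpha(2\sqrt x)$, which convert the ratios of Laguerre polynomials in the Step-2 summand into the Bessel ratios $J_s(j_{a,s}\sqrt{1-y})/J_s'(j_{a,s})$ of \eqref{eq:covariance_limit_hard_edge}, the argument $\sqrt{1-y}$ recording the ratio of the summation level to the observation level $N-s$; and (ii) the asymptotics of the diagonalized coefficients and of $\Sigma_k$, which turn the sum over levels into a Riemann sum of mesh $\sim 1/N$, $\sum_k(\cdots)\sim\int_0^1(\cdots)\,dy/y$ with $y$ the limit of (level)$/N$. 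The remaining features of \eqref{eq:covariance_limit_hard_edge} fall out of this: the factor $j_{a,s}j_{b,t}/2$ from the chain rule for the rescaling $l_{N+1-s-a,N-s}\sim j_{a,s}^2/(4N)$, the measure $dy/y$ from the $1/k$-weighting of successive levels, and $(1-y)^{|s-t|/2}$ from the mismatch between the two Bessel orders. One then checks that \eqref{eq:covariance_limit_hard_edge} is a genuine covariance: the integral converges because the integrand is $O(y)$ as $y\to0$ (since $J_s(j_{a,s}\sqrt{1-y})$ vanishes to first order there) and $O((1-y)^{\max(s,t)})$ as $y\to1$, and positive semidefiniteness is automatic as a limit of covariance matrices. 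I expect Step 3 to be the principal difficulty: the asymptotics must be uniform enough to license the Riemann-sum passage, the contributions of levels $k$ far from $N$ — where the relevant eigenvalue has left the hard-edge regime and no Bessel asymptotics apply — must be shown negligible, and the three limits $n\to\infty$, $\beta\to\infty$, $N\to\infty$ must be dovetailed in the stated order. Pinning down the exact normalizations and the exponent $|s-t|/2$, rather than an unidentified constant, is exactly what forces the explicit diagonalization of Step 2 to be carried through in full.
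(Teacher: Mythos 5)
Your architecture is the paper's: first send $\beta\to\infty$ to obtain the Gaussian recursion $\xi^k = A_k\xi^{k+1}+\eta^k$ on the Laguerre root lattice (Propositions \ref{prop:laguerre_infinity_corners} and \ref{prop:laguerre_infinity_additive_polymer}), then diagonalize the $A_k$ by dual/associated polynomials to write the covariance as a double sum over levels and frequencies (Proposition \ref{prop:covariance_summation}), then pass to a Riemann integral to get \eqref{eq:covariance_limit_hard_edge}. You also correctly locate the origin of each factor in the limiting formula. There is, however, one genuine gap, exactly at the point you defer to ``Mehler--Heine asymptotics.''

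The quantity whose hard-edge asymptotics you need is not a ratio of Laguerre polynomials at Laguerre roots: after diagonalization the summand contains $\tilde{Q}_m^{(k)}(l_{k+1-r,k})$, where $Q_m^{(k)}$ is the \emph{dual} (equivalently, associated) Laguerre polynomial of degree $m$, and the limit must be uniform in the frequency $m$ with $m/k \to y \in [0,1]$. Mehler--Heine gives the root locations $l_{k+1-r,k} \sim j_{r,\alpha}^2/(4N)$ (the paper's \eqref{eq:lag_root_hard}) but says nothing about the values of the associated polynomials, whose degree itself grows proportionally to $k$; and the entire $y$-dependence of the answer, $J_{\alpha}(j_{r,\alpha}\sqrt{1-y})$, lives in that frequency variable. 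This is the technical heart of the paper: one needs the double contour integral representation of $Q_m^{(k)}$ obtained from the generating function of the associated Laguerre polynomials (Proposition \ref{prop:Q_contour}), a steepest-descent analysis along a contour approaching the essential singularity of the integrand at $1$ at scale $N^{-1}$ (Proposition \ref{prop:Q_norm_hard_edge}), an ODE argument identifying the limit with $J_{\alpha}$, and the Fourier--Bessel orthogonality to fix the constant. Without a substitute for this, your Step 3 does not close. Two smaller points: the orthogonality measure of the dual polynomials (Proposition \ref{prop:Q_recurrence}) has nothing to do with the high-temperature limit, which appears in the paper only as introductory background; and your treatment of the boundary term at level $n$ is too quick --- the paper feeds in the single-level covariance diagonalization of \cite{AHV} for the top row, and it is the exact cancellation between that contribution and the telescoped sum over intermediate levels that removes the $n$-dependent term $(1-y)^{n-(k+k')/2}$ sitting over the singular weight $1/y$; merely invoking $n-N\to\infty$ does not control the region $y \approx 0$.
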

The proof of Theorem \ref{thm:main_hard_edge} will be given in Section \ref{sec:proof_main_hard_edge}. A single level version of this result can be found in \cite{And21}.\footnote{Any difference between the statement of our result and theirs is accounted for by the fact that they study the singular values of $X_k$ instead of the squared singular values.} In Proposition \ref{prop:laguerre_infinity_corners}, we will show that if one only takes the $\beta \to \infty$ limit, then the fluctuations still converge in the same sense to a multivariate Gaussian. The resulting process will be referred to as the \textit{Laguerre $\infty$-corners process} and the limit of $\sqrt{\beta}(\lambda_{i,k} - l_{i,k})$ will be given by the variable $\xi_{i,k}$. One analogously defines the Gaussian $\infty$-corners for the Gaussian $\beta$-corners process \cite{GK}.
\par 
As an addendum to our main result, we will show that this Gaussian process can be constructed in another fashion. In particular, the elements of this process equal the partition functions of additive polymers arising from a random walk on the zeros of the Bessel function. Varying the zero at which this random walk starts changes which element of the process one gets. To be more precise, let $j_{i,v} = 0$ for $i < -v$ and $S_v = \{j_{i,v}\}_{i=1}^{\infty}$ where $v$ is some integer. For any integer $\alpha$, define the \textit{Bessel-$\infty$ random walk started at level $\alpha$} to be a Markov chain with state space at time $t$ given by $S_{\alpha - t}$ and transition probability from $j_{a,v}\neq 0$ to $j_{b,v-1}$ given by
\begin{equation} \label{eq:transition_provs_Bessel_infty} P^{v,v-1}(a \to b) = \frac{4j_{a,v}^2}{(j_{a,v}^2 - j_{b,v-1}^2)^2}.
\end{equation}
In the case that $j_{a,v} = 0$, the random walk instead transitions to one of the 0 values in $S_{v-1}$ with uniform probability. For $v_2 < v_1$, we also define $P^{v_1, v_2}$ to be given by composing the transition matrices $P^{v_1,v_1-1}, \cdots, P^{v_2+1,v_2}$. In Section \ref{sec:additive_polymer_limit} we will show that these transition probabilities add up to 1 and thus the Bessel-$\infty$ random walk is indeed a well-defined random walk.
\par 
To illustrate this, Figure 1 gives two different examples of Bessel-$\infty$ random walks started at level $\alpha = 1$. Here, level $i$ contains the elements of $S_i$ and one can note that for $i < 0$ level $-i$ contains $i$ copies of zero represented by unlabelled points. The first walk, given by the solid line, starts at the smallest root $j_{1,1}$ of $J_1(x)$ and eventually transitions to a zero value at level -2. From there on out, the random walk can only transition to zero values. The second walk, given by the dashed line, starts at the second smallest root of $J_1(x)$ and does not reach a zero value by level -3. With all this terminology we can state our next result as follows:
\begin{theorem} 
\label{thm:secondary_polymer_result}
For integers $v$ and positive integers $a$, let $\eta_{a,v}$ be independent mean zero Gaussians with variance $\frac{j_{a,v}^2}{2}$. The additive polymer partition functions
\[ \zeta_{a,v_1} := \sum_{v=-\infty}^{v_1} \sum_{b=1}^{\infty} P^{v_1, v}(a \to b)\eta_{b,v}\]
are zero mean Gaussians with covariances given by \eqref{eq:covariance_limit_hard_edge}.
\end{theorem}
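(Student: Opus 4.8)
The plan is to compute the covariance of the polymer directly, converting the product structure of the Bessel-$\infty$ walk into a Fourier-Bessel identity followed by a Parseval sum. That any finite collection of the $\zeta_{a,v_1}$ is jointly mean-zero Gaussian is immediate (each is an $L^{2}$-limit of linear combinations of the independent Gaussians $\eta_{b,v}$); the convergence of the defining series, which makes the covariances below meaningful, I would extract from the tail bounds on the transition probabilities \eqref{eq:transition_provs_Bessel_infty} established in Section \ref{sec:additive_polymer_limit}, and it is in any case a by-product of the finiteness of the eventual answer \eqref{eq:covariance_limit_hard_edge}. Granting this, independence of the $\eta_{b,v}$ gives, for $v_2\le v_1$ (which we may assume),
\[
\cov(\zeta_{a,v_1},\zeta_{c,v_2})=\sum_{v\le v_2}\sum_{b\ge1}\frac{j_{b,v}^{2}}{2}\,P^{v_1,v}(a\to b)\,P^{v_2,v}(c\to b),
\]
and the composition rule $P^{v_1,v}=P^{v_1,v_2}P^{v_2,v}$ lets us factor $\cov(\zeta_{a,v_1},\zeta_{c,v_2})=\sum_{d\ge1}P^{v_1,v_2}(a\to d)\,\cov(\zeta_{d,v_2},\zeta_{c,v_2})$. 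So it suffices to evaluate the same-level covariances $\cov(\zeta_{d,v},\zeta_{c,v})$ and then to propagate them with $P^{v_1,v_2}$.

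The algebraic heart is the intertwining identity
\[
\sum_{d\ge1}P^{v,v-1}(a\to d)\,\frac{j_{d,v-1}\,J_{v-1}(j_{d,v-1}u)}{J_{v-1}'(j_{d,v-1})}=u\,\frac{j_{a,v}\,J_{v}(j_{a,v}u)}{J_{v}'(j_{a,v})},\qquad u\in(0,1),
\]
which I would prove from the Mittag-Leffler expansion of $z\mapsto J_{v-1}(zu)/J_{v-1}(z)$: viewed as a function of $z^{2}$ this ratio has simple poles at $z^{2}=j_{d,v-1}^{2}$ with explicit residues and no polynomial part (it decays off the real axis when $0<u<1$), and taking one derivative in $z^{2}$ and evaluating at $z^{2}=j_{a,v}^{2}$ reproduces exactly the double-pole weights $4j_{a,v}^{2}/(j_{a,v}^{2}-j_{d,v-1}^{2})^{2}$ from \eqref{eq:transition_provs_Bessel_infty}, the unwanted cross term vanishing because $J_{v}(j_{a,v})=0$, together with the evaluations $J_{v-1}(j_{a,v})=J_{v}'(j_{a,v})$ and $J_{v}(j_{d,v-1})=-J_{v-1}'(j_{d,v-1})$. (The same identity as $u\to 1^{-}$ returns $\sum_{d}P^{v,v-1}(a\to d)=1$.) Iterating, $\sum_{d}P^{v_1,v_2}(a\to d)\,\tfrac{j_{d,v_2}J_{v_2}(j_{d,v_2}u)}{J_{v_2}'(j_{d,v_2})}=u^{\,v_1-v_2}\,\tfrac{j_{a,v_1}J_{v_1}(j_{a,v_1}u)}{J_{v_1}'(j_{a,v_1})}$.

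Now fix $v'\le v_2$ and read this iterated identity, with endpoints $v_2$ and $v'$, as the Fourier-Bessel expansion of $u\mapsto u^{\,v_2-v'}\,\tfrac{j_{d,v_2}J_{v_2}(j_{d,v_2}u)}{J_{v_2}'(j_{d,v_2})}$ in the orthogonal basis $\{J_{v'}(j_{b,v'}u)\}_{b\ge1}$ of $L^{2}([0,1],u\,du)$, whose $b$-th vector has squared norm $\tfrac12 J_{v'}'(j_{b,v'})^{2}$; the $b$-th coefficient is $P^{v_2,v'}(d\to b)\,j_{b,v'}/J_{v'}'(j_{b,v'})$, so Parseval gives
\[
\sum_{b\ge1}\frac{j_{b,v'}^{2}}{2}\,P^{v_2,v'}(d\to b)\,P^{v_2,v'}(c\to b)=\frac{j_{d,v_2}j_{c,v_2}}{J_{v_2}'(j_{d,v_2})J_{v_2}'(j_{c,v_2})}\int_{0}^{1}u^{\,2(v_2-v')+1}J_{v_2}(j_{d,v_2}u)J_{v_2}(j_{c,v_2}u)\,du.
\]
Summing over $v'\le v_2$ and using $\sum_{m\ge0}u^{2m}=(1-u^{2})^{-1}$ gives $\cov(\zeta_{d,v_2},\zeta_{c,v_2})=\tfrac{j_{d,v_2}j_{c,v_2}}{J_{v_2}'(j_{d,v_2})J_{v_2}'(j_{c,v_2})}\int_0^1\frac{u\,J_{v_2}(j_{d,v_2}u)J_{v_2}(j_{c,v_2}u)}{1-u^{2}}\,du$, and running this through the factorization of the first paragraph together with the iterated intertwining identity yields
\[
\cov(\zeta_{a,v_1},\zeta_{c,v_2})=\frac{j_{a,v_1}j_{c,v_2}}{J_{v_1}'(j_{a,v_1})J_{v_2}'(j_{c,v_2})}\int_{0}^{1}\frac{u^{\,v_1-v_2+1}\,J_{v_1}(j_{a,v_1}u)J_{v_2}(j_{c,v_2}u)}{1-u^{2}}\,du,
\]
which is precisely \eqref{eq:covariance_limit_hard_edge} after the substitution $u=\sqrt{1-y}$, the Jacobian $2u$ cancelling the factor $\tfrac12$.

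The main obstacle is analytic rather than algebraic: I would have to justify that the Mittag-Leffler expansion has no polynomial part, that $\sum_{v'}$ may be exchanged with the integral and with the Parseval sum, and that all normalizations and signs persist when $v-1<0$, where the offset convention $j_{i-s,s}=j_{i,-s}$ enters and the passage to a function of $z^{2}$ must be done carefully. As a much shorter but less self-contained alternative, one may observe that once the proof of Theorem \ref{thm:main_hard_edge} identifies its limiting process with this additive polymer (the finite-ensemble transition operators converging, under the hard-edge scaling, to the kernel \eqref{eq:transition_provs_Bessel_infty}), the covariance computed there is already the assertion.
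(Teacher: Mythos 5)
Your proposal is correct and shares its computational core with the paper's proof: both expand the covariance as $\sum_{v}\sum_{b}\tfrac{j_{b,v}^{2}}{2}P^{v_1,v}(a\to b)P^{v_2,v}(c\to b)$ by independence, collapse the inner sum using orthogonality of the Fourier--Bessel system, and finish by summing a geometric series in $(1-y)$ (your $u^{2}$). The differences lie in the order of operations and in how the key identity is obtained. The paper first establishes the closed integral form \eqref{eq:additive_polymer_transition_probs} for $P^{v_1,v_2}$ (Proposition \ref{prop:additive_polymer_transition_probs}), getting the one-step case as the hard-edge limit of the finite-$N$ diffusion kernels of Lemma \ref{lem:diffusive_kernel_expansion} and the composition rule from Watson's expansion theorem (Lemma \ref{lem:Tit46_laguerre_expansion}); it then computes the general covariance directly, with \eqref{eq:Bessel_in_Bessel_Basis} --- which is precisely your iterated intertwining identity in the variable $u=\sqrt{1-y}$ --- doing the collapsing. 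You instead reduce to same-level covariances via the composition rule first, and you prove the intertwining identity intrinsically by residue calculus on the Mittag--Leffler expansion of $J_{v-1}(zu)/J_{v-1}(z)$; your residue bookkeeping, including the cross-term cancellation at $J_{v}(j_{a,v})=0$ and the evaluations $J_{v-1}(j_{a,v})=J_{v}'(j_{a,v})$, checks out. What your route buys is independence from the finite-$N$ asymptotics of Section \ref{sec:hard_edge_Q_asymp}; what it costs is exactly the analytic debt you flag (absence of an entire part, the interchanges, the negative-order and zero-root conventions), none of which is fatal since the paper itself invokes \cite{Wat} both for the Fourier--Bessel theorem and, in the stochasticity lemma preceding Proposition \ref{prop:additive_polymer_transition_probs}, for the very Mittag--Leffler expansion you need. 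Your closing shortcut --- that Theorem \ref{thm:main_hard_edge} already gives the covariances once the limiting process is identified with the polymer --- is how the paper motivates the statement, but it still requires \eqref{eq:general_diffusion_kernel_limit} to identify the limiting kernels with $P^{v_1,v_2}$, i.e., most of Proposition \ref{prop:additive_polymer_transition_probs} anyway.
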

These random variables $\zeta_{a,v_1}$ can be seen to be analogous to the $\textit{Airy}_{\infty}$\textit{ line ensemble} of \cite{GK}, which arises instead from a continuous time random walk on the roots of the Airy function.
\begin{figure}[h] 
\tikzstyle{particle}=[circle, draw, fill=black, minimum size = 3mm, inner sep=0pt]
\begin{tikzpicture}[xscale=.5, yscale = .6]


\node[particle] (j11) at (3.8317,-2) {};
\node[particle] (j21) at (7.0156,-2)  {};

\node[particle] (j31) at (10.1735,-2) {};
\node[particle] (j41) at (13.3237,-2) {};
\node[particle] (j51) at (16.4706,-2) {};

\node[particle] (j10) at (2.4048,0) {};
\node[particle] (j20) at (5.5201,0) {};
\node[particle] (j30) at (8.6537,0) {};
\node[particle] (j40) at (11.7915,0) {};
\node[particle] (j50) at (14.9309,0) {};

\node[particle] (mj11) at (0,2) {};
\node[particle] (mj21) at (3.8317,2) {};
\node[particle] (mj31) at (7.0156,2) {};
\node[particle] (mj41) at (10.1735,2) {};
\node[particle] (mj51) at (13.3237,2) {};
\node[particle] (mj61) at (16.4706,2) {};

\node (center2) at (0,4) {};
\node[particle] (mj12) at (center2.west) {};
\node[particle] (mj22) at (center2.east) {};
\node[particle] (mj32) at (5.1356,4) {};
\node[particle] (mj42) at (8.4172,4) {};
\node[particle] (mj52) at (11.6198,4) {};
\node[particle] (mj62) at (14.7960,4) {};
\node[particle] (mj72) at (17.9598,4) {};

\node[particle] (mj23) at (0,6) {};
\node[particle, xshift=-1.5mm] (mj13) at (mj23.west) {};
\node[particle, xshift=1.5mm,fill=red] (mj33) at (mj23.east) {};
\node[particle, xshift=1.5mm] (mj33) at (mj23.east) {};
\node[particle] (mj43) at (6.3802,6) {};
\node[particle] (mj53) at (9.7610,6) {};
\node[particle] (mj63) at (13.0152,6) {};
\node[particle] (mj73) at (16.2235,6) {};
\node[particle] (mj83) at (19.4094,6) {};

\node at (0,8) {\huge $\vdots$};
\node[rotate=-22] at (7.643,8) {\huge $\vdots$};
\node[rotate=-22] at (11.1,8) {\huge $\vdots$};
\node[rotate=-22] at (14.41,8) {\huge $\vdots$};
\node[rotate=-22] at (17.65,8) {\huge $\vdots$};
\node[rotate=-22] at (20.859,8) {\huge $\vdots$};
\node[rotate=-45] at (24.05,7.9) {\huge $\vdots$};
\node[rotate=90] at (22.6,6) {\huge $\vdots$};
\node[rotate=90] at (21.15,4) {\huge $\vdots$};
\node[rotate=90] at (19.66,2) {\huge $\vdots$};
\node[rotate=90] at (18.12,0) {\huge $\vdots$};
\node[rotate=90] at (19.66,-2) {\huge $\vdots$};

\node at (-5,6) {Level $-3$};
\node at (-5,4) {Level $-2$};
\node at (-5,2) {Level $-1$};
\node at (-5,0) {Level $0$};
\node at (-5,-2) {Level $1$};

\node[yshift = 2mm] at (j11.north) {$j_{1,1}$};
\node[yshift = 2mm] at (j21.north) {$j_{2,1}$};
\node[yshift = 1mm, xshift = 5mm] at (j31.north) {$j_{3,1}$};
\node[yshift = 2mm] at (j41.north) {$j_{4,1}$};
\node[yshift = 2mm] at (j51.north) {$j_{5,1}$};

\node[yshift = 2mm] at (j10.north) {$j_{1,0}$};
\node[yshift = 2mm, xshift = 3mm] at (j20.north) {$j_{2,0}$};
\node[yshift = 2mm] at (j30.north) {$j_{3,0}$};
\node[yshift = 2mm] at (j40.north) {$j_{4,0}$};
\node[yshift = 2mm,xshift=3mm] at (j50.north) {$j_{5,0}$};

\node[yshift = 2mm] at (mj11.north) {};
\node[yshift = 2mm, xshift = 4mm] at (mj21.north) {$j_{2,-1}$};
\node[yshift = 2mm] at (mj31.north) {$j_{3,-1}$};
\node[yshift = 2mm,xshift=4mm] at (mj41.north) {$j_{4,-1}$};
\node[yshift = 2mm] at (mj51.north) {$j_{5,-1}$};
\node[yshift = 2mm] at (mj61.north) {$j_{6,-1}$};

\node[yshift = 2mm, xshift=3mm] at (mj22.north) {};
\node[yshift = 2mm] at (mj32.north) {$j_{3,-2}$};
\node[yshift=2mm, xshift=5mm] at (mj42.north) {$j_{4,-2}$};
\node[yshift = 2mm] at (mj52.north) {$j_{5,-2}$};
\node[yshift = 2mm] at (mj62.north) {$j_{6,-2}$};
\node[yshift = 2mm] at (mj72.north) {$j_{7,-2}$};

\node[yshift = 2mm] at (mj23.north) {};
\node[yshift = 2mm] at (mj43.north) {$j_{4,-3}$};
\node[yshift= 2mm] at (mj53.north) {$j_{5,-3}$};
\node[yshift = 2mm] at (mj63.north) {$j_{6,-3}$};
\node[yshift = 2mm] at (mj73.north) {$j_{7,-3}$};
\node[yshift = 2mm] at (mj83.north) {$j_{8,-3}$};`

\draw[->] (j11.east) .. controls +(right:5mm) and +(down:5mm) ..  (j20.south);
\draw[->] (j20.north) .. controls +(up:5mm) and +(down:5mm) ..  (mj21.south);
\draw[->] (mj21.north) .. controls +(up:5mm) and +(down:5mm) ..  (mj12.south);
\draw[->] (mj12.north) .. controls +(up:5mm) and +(down:5mm) ..  (mj33.south);

\draw[dashed, ->] (j21.east) .. controls +(right:5mm) and +(down:5mm) ..  (j50.south);
\draw[dashed, ->] (j50.north) .. controls +(up:5mm) and +(down:5mm) ..  (mj41.south);
\draw[dashed, ->] (mj41.north) .. controls +(up:5mm) and +(down:5mm) ..  (mj42.south);
\draw[dashed, ->] (mj42.north) .. controls +(up:5mm) and +(down:5mm) ..  (mj53.south);

\end{tikzpicture}
\captionsetup{justification=raggedright}
\caption{Two examples of Bessel-$\infty$ random walks started at $\alpha = 1$ represented by a solid and dashed line. Level $i$ here contains the elements of $S_i$. The unlabelled points correspond to values of 0.}
\end{figure}
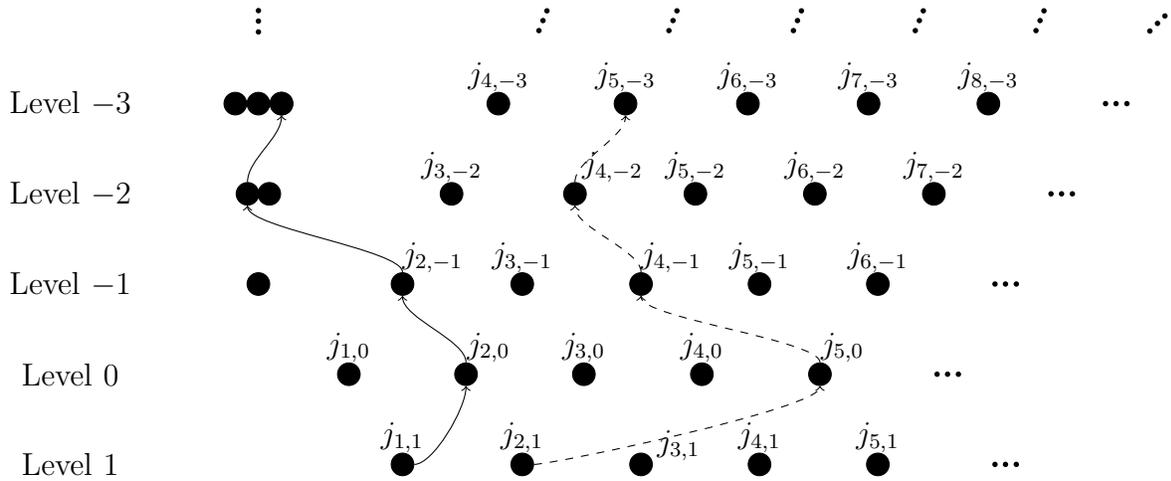

\subsection{Related Literature}
While \cite{GK} and \cite{AHV} used largely different techniques to obtain zero temperature edge limits, a surprising connection between their methods emerged: the presence of dual polynomials. Suppose $\mathcal{P}_0(x),\mathcal{P}_1(x),\ldots$ is a sequence of polynomials satisfying the three-term recurrence
\[x\mathcal{P}_i(x) = \mathcal{P}_{i+1}(x) + b_i\mathcal{P}_i(x) + u_i\mathcal{P}_{i-1}(x),\]
for all $i \ge 1$. Let $M$ be a fixed positive integer. The dual polynomials \cite{BS, VZ} $\mathcal{Q}_0(x),\ldots \mathcal{Q}_{M-1}(x)$ associated to $\mathcal{P}_i(x)$ are the polynomials satisfying $\mathcal{Q}_0(x) = 1, \mathcal{Q}_1(x) = x - b_{M-1}$ and the recurrence
\[x\mathcal{Q}_i(x) = \mathcal{Q}_{i+1}(x) + b_{M-i-1}\mathcal{Q}_i(x) + u_{M-i}\mathcal{Q}_{i-1}(x).\]
\cite{AHV} proved that for appropriate choices of $M$ the dual Hermite/Laguerre/Jacobi polynomials diagonalize the single-level covariance matrices of the corresponding ensemble at zero temperature. In \cite{GK}, these polynomials instead diagonalized certain transition matrices relating consecutive levels of the Gaussian $\infty$-corners process. More precisely, \cite{GK} showed that one can decompose 
\begin{equation} \label{eq:intro_to_transition_matrices}
\xi_{a,k} = \sum_{b=1}^{k+1}\alpha_{a,b}^k\xi_{b,k+1}  + \eta_{a,k},
\end{equation}
where the variables $\eta_{a,k}$ are independent mean zero Gaussians with explicit variance, $\alpha_{a,b}^k$ are the elements of the diagonalizable transition matrix, and $\xi_{a,k}$ are the correlated Gaussians defined the paragraph following Theorem \ref{thm:main_hard_edge}. Through an iteration of this decomposition along with some stochastic results, the authors were able to express $\xi_{a,k}$ in terms of an infinite summation involving only the transition matrices and the independent Gaussians $\eta_{a,k}$. From there, as in \cite{AHV}, the problem reduces to analyzing the dual polynomials in the large $n$ limit. This analysis was conducted in \cite{AHV} by proving that the limiting expression for the dual polynomials satisfies a differential equation, which can be discretely approximated by the above recurrence. \cite{GK} instead obtained contour integration identities for the polynomials, which they used to perform steepest descent asymptotic analysis. 
\par 
Naturally, the results obtained by \cite{GK, AHV} also bear a resemblance to ours. \cite{GK} proved that for the Gaussian $\infty$-corners ensemble, under a different scaling, the edge limit of the covariances from Theorem \ref{thm:main_hard_edge} instead takes the form
\begin{equation} 
\label{eqn:Hermite_Soft_Edge_Covariances}
2\int_0^{\infty} \frac{\Ai(\mathfrak{a}_i+y)\Ai(\mathfrak{a}_j+y)}{\Ai'(\mathfrak{a}_i)\Ai'(\mathfrak{a}_j)}\exp(-|t-s|y)\frac{dy}{y}
\end{equation}
where $\Ai(x)$ is the airy function and $\mathfrak{a}_1 > \mathfrak{a}_2 > \cdots$ are its real roots. The functions $\Ai(\mathfrak{a}_i+y)$ and $J_s(j_{a,s}\sqrt{1-y})$ arise here as the limits of the dual polynomials. However, they share another connection: they both make up the terms of a Sturm-Liouville basis. We will explore this connection further in Section \ref{sec:airy_bessel_sturm}.
\par 

\subsection{Our Methods}
Our proof of Theorem \ref{thm:main_hard_edge} will roughly proceed according to the method established \cite{GK}. In Section \ref{sec:zero_temp}, we formally define the processes at hand and compute the joint distribution of the mean zero Gaussians $\xi_{a,k}$. In Section \ref{sec:jumping_process}, we show how to decompose the covariances of the Laguerre $\infty$-corners process in terms of dual polynomials, and we give a contour integral representation of said polynomials to make asymptotic analysis possible. Finally, in Section \ref{sec:hard_edge_limit}, we perform a steepest analysis of the contour to establish the main result.
\par
Two main sources of difficulty arise when applying this method that are not present in the work of \cite{GK}. The first of these emerges as the matrices $X_k^*X_k$ no longer have distinct eigenvalues for integers $k>N$. This breaks the assumptions used to determine the joint distribution of corners processes and to diagonalize the relevant transition matrices. We resolve the former issue through an analysis of characteristic polynomials and the application of a classical determinant identity. For the latter, we show the transition matrices can still be diagonalized in the same manner. However, we need to extend these matrices to account for the repeated roots and the corresponding contour based proofs of \cite{GK} must be modified to deal with additional residues. We also remark that our decomposition of the covariances differs from \cite{GK} as we incorporate the results of \cite{AHV} to avoid an infinite summation.
\par 
The other source of difficulty arises in obtaining contour integral expressions for the dual polynomials and performing steepest descent. We show that by exploiting the dual polynomial recurrence one can obtain two separate contour expressions in the Laguerre case. As the Hermite polynomials can be evaluated from the Laguerre polynomials $L_n^{(\alpha)}(x)$ with $\alpha = \pm\frac{1}{2}$, one of these expressions is a natural extension of the one obtained in \cite{GK}. Unfortunately, for general $\alpha$, a steepest descent analysis of this expression proved too complex. The other expression we obtained has its own roadblock as its integrand has an essential singularity at 1. This essential singularity is the primary reason this paper only concerns the hard edge limit and not the soft edge limit as we were unable to find a way of adjusting the contour to make steepest descent possible. However, we prove that, at the hard edge, one can select a suitable contour that asymptotically approaches the essential singularity as the number of rows tends to infinity.
\par 
Theorem \ref{thm:secondary_polymer_result} will altogether have a much shorter proof. The computation of the transition probabilities $P^{v_1, v_2}(a \to s)$ will readily follow from techniques used in the proof of Theorem \ref{thm:main_hard_edge}. From here, the main tools we need to complete the proof will be a Fourier-like expansion arising from the aforementioned Sturm-Liouville basis and a Mittag-Leffler expansion for Bessel functions.

\subsection*{Acknowledgements} The author would like to thank Vadim Gorin for helpful discussions and Alexei Borodin for his guidance and feedback throughout the research process. The author was supported by the NSF Graduate Research fellowship under grant $\#$1745302 and partially by Alexei Borodin's grants NSF DMS-1853981 and the Simons Investigator program.

\section{The Laguerre $\beta$-Ensemble and Extensions}
\label{sec:zero_temp}

\subsection{The Laguerre $\beta$-Corners Process}
Let $X_1,X_2,X_3,\ldots$ be an infinite sequence of random matrices whose entries are independent standard normal Gaussians. These entries can be real, complex, or quaternionic corresponding to a parameter $\beta$ equaling 1,2, or 4 respectively. We assume that the matrix $X_k$ is $N \times k$ and forms the leftmost block of $X_{k+1}$ for all $k$. Define $\lambda^k = (\lambda_{1,k}, \ldots, \lambda_{N \wedge k,k})$ to be the nonzero squared singular values of $X_k$ listed in decreasing order where $N \wedge k = \min(N,k)$. For the purpose of simplifying some future equations, we also set $\lambda_{i,k} = 0$ for $ N \wedge k < i \le k$.
\par 
For each $k$, the distribution of $\lambda^k$ is given by the Laguerre $\beta$-ensemble with $N \wedge k$ particles and $\alpha = |N-k|$ \cite{For10}. The embedding of $X_k$ inside of $X_{k+1}$ allows one to extend this one dimensional ensemble to a multi-dimensional one by considering the rows $\lambda^k$ jointly across multiple values of $k$. In particular, for $k < N$, the top $k \times k$ corner of $X_{k+1}^*X_{k+1}$ is $X_k^*X_k$; for $k \ge N$, $X_{k+1}X_{k+1}^*$ is a rank 1 perturbation of $X_kX_k^*$. In both cases this implies that the squared singular values $\lambda^k, \lambda^{k+1}$ interlace \cite{Ner, For22}.
\par 
Combining these properties with an analysis of the characteristic polynomial of $X_k^*X_k$ allows one to compute this joint distribution as follows:
\begin{proposition} For $\beta = 1,2,4$ and $n \ge N$, the joint distribution of $\lambda^1,\ldots, \lambda^n$ has density proportional to
\begin{equation} \label{eqn:beta_laguerre_corners_dist}
\prod_{i=1}^N \lambda_{i,N}^{\frac{\beta}2-1}e^{-\frac{\beta\lambda_{i,n}}2}\prod_{1 \le i < j \le N} (\lambda_{i,n} - \lambda_{j,n}) \prod_{k=1}^{n-1}\prod_{1 \le i < j \le N \wedge k} (\lambda_{i,k} - \lambda_{j,k})^{2-\beta} \prod_{k=1}^{n-1}\prod_{a = 1}^{k \wedge N}\prod_{b=1}^{k+1 \wedge N} |\lambda_{a,k} - \lambda_{b,k+1}|^{\frac{\beta}2 - 1}.
\end{equation}
\end{proposition}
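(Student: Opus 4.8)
The plan is to establish \eqref{eqn:beta_laguerre_corners_dist} for $\beta=1,2,4$ directly from the matrix model (for general $\beta$ it is then taken as the definition, as explained in the text), by writing the joint density as a telescoping product of one-step transition kernels and collapsing it. Since $X_{k+1}=[\,X_k\mid x_{k+1}\,]$ has i.i.d.\ entries, conditionally on $X_k$ the new column $x_{k+1}$ is an independent standard Gaussian in $\F^N$, so
\[
\text{density}(\lambda^1,\dots,\lambda^n)\;=\;\text{density}(\lambda^1)\cdot\prod_{k=1}^{n-1}f_k\bigl(\lambda^{k+1}\mid\lambda^k\bigr),
\]
where $\text{density}(\lambda^1)$ is the law of $\lambda_{1,1}=\|X_1\|^2$ (a chi-square, i.e.\ the one-particle Laguerre $\beta$-ensemble with $\alpha=N-1$, density $\propto\lambda_{1,1}^{\beta N/2-1}e^{-\beta\lambda_{1,1}/2}$) and $f_k$ is the conditional law of the squared singular values of $X_{k+1}$ given those of $X_k$. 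The interlacing $\lambda^k\prec\lambda^{k+1}$ recorded in the text will be used throughout, and each $f_k$ will be obtained by an explicit change of variables; the cases $k<N$ and $k\ge N$ need slightly different bookkeeping.

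For $k<N$, $X_k^*X_k$ is a.s.\ a full-rank $k\times k$ matrix sitting as the top-left corner of $X_{k+1}^*X_{k+1}$. Split $x_{k+1}$ into its projection onto $\mathrm{col}(X_k)$ and its orthogonal part: the useful coordinates are $y:=X_k^*x_{k+1}$, which given $X_k$ is Gaussian with covariance $X_k^*X_k$, and $r:=\|x_{k+1}^{\perp}\|^2$, an independent chi-square with $\beta(N-k)$ degrees of freedom. Expanding the characteristic polynomial of the bordered matrix,
\[
\det(xI_{k+1}-X_{k+1}^*X_{k+1})\;=\;(x-\|x_{k+1}\|^2)\det(xI_k-X_k^*X_k)\;-\;y^*\,\mathrm{adj}(xI_k-X_k^*X_k)\,y,
\]
and moving to the eigenbasis of $X_k^*X_k$, a partial-fraction identity shows that, given $\lambda^k$ and $\lambda^{k+1}$, the squared moduli $w_i$ of the eigenbasis coordinates of $y$ and the scalar $r$ are uniquely determined: $w_i=-\prod_j(\lambda_{i,k}-\lambda_{j,k+1})\big/\prod_{i'\ne i}(\lambda_{i,k}-\lambda_{i',k})$ (nonnegative by interlacing), while evaluating the resulting identity at $x=0$ yields the clean relation $r=\prod_j\lambda_{j,k+1}\big/\prod_i\lambda_{i,k}$. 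The Jacobian of $(w_1,\dots,w_k,r)\mapsto\lambda^{k+1}$ collapses, via a Cauchy-determinant evaluation, to a ratio of Vandermonde products, and integrating out the angular variables of $y$ and $x_{k+1}^{\perp}$ (which contributes only a $\lambda$-independent constant) produces $f_k$ in closed form.

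For $k\ge N$ the matrix $X_k^*X_k$ has rank $N<k$, so $0$ is a repeated eigenvalue and the eigendecomposition used above degenerates; this is the one place the argument genuinely leaves the distinct-eigenvalue setting. The fix is the classical identity $\det(xI_k-X_k^*X_k)=x^{k-N}\det(xI_N-X_kX_k^*)$, which lets us replace $X_k^*X_k$ by the full-rank $N\times N$ Gram matrix $X_kX_k^*$, whose eigenvalues are precisely the a.s.\ distinct $\lambda_{1,k}>\dots>\lambda_{N,k}$. Since $X_{k+1}X_{k+1}^*=X_kX_k^*+x_{k+1}x_{k+1}^*$ is a rank-one additive perturbation, the interlacing is now read off the secular equation, and the very same scheme — partial fractions for the secular equation, a Cauchy determinant for the Jacobian — gives $f_k$; because there is no orthogonal component there is no $r$-factor and the perturbing vector contributes no individual-eigenvalue weight.

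Finally one multiplies $\text{density}(\lambda^1)$ by $\prod_kf_k$ and simplifies. Each level-$k$ Vandermonde appears with exponent $+1$ from $f_{k-1}$ and $-(\beta-1)$ from $f_k$, telescoping to $2-\beta$ for $1\le k\le n-1$ and to $1$ at level $n$; the Gaussian exponentials telescope to $\prod_ie^{-\beta\lambda_{i,n}/2}$; and the crossed terms assemble into $\prod_k\prod_{a,b}|\lambda_{a,k}-\lambda_{b,k+1}|^{\beta/2-1}$. The one delicate point is the weight: for $k<N$, the covariance of $y$ contributes $\prod_i\lambda_{i,k}^{-\beta/2}$ and the orthogonal component contributes $r^{\beta(N-k)/2-1}$, and substituting $r=\prod_j\lambda_{j,k+1}/\prod_i\lambda_{i,k}$ turns these into $\prod_i\lambda_{i,k}^{1-\beta(N-k+1)/2}\prod_j\lambda_{j,k+1}^{\beta(N-k)/2-1}$; these telescope across $k=1,\dots,N-1$ against $\text{density}(\lambda^1)\propto\prod_i\lambda_{i,1}^{\beta N/2-1}e^{-\beta\lambda_{i,1}/2}$, leaving exactly $\prod_i\lambda_{i,N}^{\beta/2-1}$ and no weight at any other level, which matches \eqref{eqn:beta_laguerre_corners_dist}. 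I expect the main obstacle to be precisely the $k\ge N$ regime — justifying the passage from $X_k^*X_k$ to $X_kX_k^*$ via the determinant identity without disturbing the interlacing or the telescoping — together with the careful verification that the scattered weight factors collapse as described.
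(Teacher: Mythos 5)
Your proposal is correct, and for the levels $k<N$ it takes a genuinely different route from the paper. The paper handles all of $\lambda^1,\dots,\lambda^{N-1}$ in one stroke: it observes that the conditional law of the lower corners given $\lambda^N$ is invariant under left unitary multiplication of $X_N$, and invokes Neretin's theorem that this invariance forces the explicit Gibbs-type density \eqref{eq:conditional_probability_lower_rows}; the explicit characteristic-polynomial computation is only carried out for the steps $k\ge N$, exactly as in your third paragraph (rank-one perturbation of $X_kX_k^*$, the identity \eqref{eq:determinant_equality}, partial fractions, Cauchy determinant). You instead derive every one-step kernel $f_k$ by hand, including $k<N$, via the bordered-matrix secular equation with the extra coordinate $r=\|x_{k+1}^{\perp}\|^2$ and the evaluation at $x=0$ giving $r=\prod_j\lambda_{j,k+1}/\prod_i\lambda_{i,k}$; this is essentially the Dixon--Anderson/Forrester--Rains change of variables. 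Your bookkeeping checks out: the exponent of the level-$k$ weight is $\beta(N-k+1)/2-1$ from $f_{k-1}$ and $1-\beta(N-k+1)/2$ from $f_k$, cancelling everywhere except at $k=N$ where it leaves $\prod_i\lambda_{i,N}^{\beta/2-1}$, and the Vandermonde and cross-level exponents telescope to $2-\beta$ and $\beta/2-1$ as required. What each approach buys: the paper's citation of the invariance characterization is shorter and sidesteps the $(k+1)$-variable Jacobian for the bordered case, but is a black box; your derivation is self-contained, produces the one-step conditional kernels explicitly for all $k$ (not just $k\ge N$), and makes the origin of the isolated weight $\prod_i\lambda_{i,N}^{\beta/2-1}$ at level $N$ transparent as the residue of a telescoping product. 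The only point to be careful about, which you already flag, is justifying that in the eigenbasis of $X_k^*X_k$ the coordinates of $y=X_k^*x_{k+1}$ remain independent Gaussians with variances $\lambda_{i,k}$ and are independent of $r$ --- the same unitary-invariance step the paper uses below \eqref{eq:char_poly_equivalence} --- and that the map $(w,r)\mapsto\lambda^{k+1}$ is a bijection onto the interlacing chamber so the change of variables is legitimate.
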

\begin{proof}
We'll begin by dealing with the case $n = N$. Note that the joint distribution of $\lambda^1,\ldots, \lambda^N$ is invariant under left multiplication of $X_N$ by a unitary matrix. As $\lambda^N$ is fixed by this action, the joint distribution of $\lambda^{1},\ldots,\lambda^{N-1}$ conditioned on $\lambda^N$ is also invariant under the action. This invariance property uniquely determines the joint distribution to be (see \cite{Ner} for a proof)
\begin{equation} \label{eq:conditional_probability_lower_rows}
    \mathbb{P}(\lambda^1,\ldots, \lambda^{N-1}|\lambda^N) = \frac{1}{Z_N}\prod_{k=1}^{N-1}\left[\prod_{1 \le i < j \le k} (\lambda_{i,k} - \lambda_{j,k})^{2-\beta} \prod_{a=1}^k \prod_{b=1}^{k+1} |\lambda_{a,k} - \lambda_{b,k+1}|^{\frac{\beta}2-1} \right],
\end{equation}
where
\[Z_N = \prod_{k=1}^N\frac{\Gamma(\beta/2)^k}{\Gamma(k\beta/2)}\prod_{1 \le i < j \le N} (\lambda_{i,N} - \lambda_{j,N})^{\beta-1}.\]
As $\lambda^N$ is distributed according to the Laguerre $\beta$-ensemble with $\alpha = 0$, the case $n=N$ immediately follows. To deal with $n > N$ we will determine the conditional probabilities $\bb{P}(\lambda^{n+1}|\lambda^n)$. For $\beta = 1,2$, these were computed in \cite{FR}. To prove the result generally, we employ the following identity also used in \cite{FR}:
\begin{equation} \label{eq:determinant_equality}
\det(I_{K\times K} - A_{K\times M}B_{M\times K}) = \det(I_{M\times M} - B_{M\times K}A_{K \times M}).
\end{equation}
Let $c_{n+1} = (c_{n+1}^1,\ldots,c_{n+1}^N)^{\top}$ be the final column of $X_{n+1}$ and let $p_{n+1}(z)$ be the characteristic polynomial of $X_{n+1}X_{n+1}^*$. Consider $p_{n+1}(z)$ as a random function conditioned on $\lambda^n$. We have:
\begin{align} p_{n+1}(z) &= \det(zI_{N\times N} - X_nX_n^* - c_{n+1}c_{n+1}^*) \nonumber \\
&=p_n(z) \det(I_{N\times N} - c_{n+1}c_{n+1}^*(zI_{N\times N}-X_nX_n^*)^{-1}) \nonumber \\
&=p_n(z) \det(I_{1\times 1} - c_{n+1}^*(zI_{N\times N}-X_nX_n^*)^{-1}c_{n+1}) \nonumber \\
&\,{\buildrel d \over =}\ p_n(z) \left(1 - \sum_{i=1}^N \frac{|c_{n+1}^i|^2}{z-\lambda_{i,n}}\right). \label{eq:char_poly_equivalence}
\end{align}
where we employed \eqref{eq:determinant_equality} in the third line. Additionally, the final equality here is only in distribution and holds as unitary matrices, such as the one diagonalizing $X_nX_n^*$, preserve the distribution of Gaussians, such as $c_{n+1}^*$.
Plugging in $\lambda_{j,n}$ then gives the distributional equality 
\begin{equation} \label{eq:phi_defn}
\phi(\lambda^{n+1}) := \left(-\frac{p_{n+1}(\lambda_{1,n})}{p_{n}'(\lambda_{1,n})},\ldots, -\frac{p_{n+1}(\lambda_{N,n})}{p_{n}'(\lambda_{N,n})}\right) \,{\buildrel d \over =} (|c_{n+1}^1|^2,\ldots, |c_{n+1}^N|^2).
\end{equation}
The approach from here on out will be to use the map $\phi$ to transform the distribution of the independent $\frac{1}{\beta}\chi^2_{\beta}$ random variables $|c_{n+1}^i|^2$ to the distribution of $\lambda^{n+1}$. As $\phi$ is injective such a transformation is valid, and thus we get the following:
\begin{equation} \label{eq:conditional_probability_exp1}
\mathbb{P}(\lambda^{n+1}|\lambda^n) = \frac{(\beta/2)^{\frac{N\beta}2}}{\Gamma(\beta/2)^N} \left(\prod_{i=1}^N -\frac{p_{n+1}(\lambda_{i,n})}{p_{n}'(\lambda_{i,n})}\right)^{\frac{\beta}2-1} \exp\left(\frac{\beta}{2}\sum_{i=1}^N\frac{p_{n+1}(\lambda_{i,n})}{p_{n}'(\lambda_{i,n})}\right)|\det(D\phi)|.\end{equation}
The product term is easily expanded out in terms of the roots $\lambda_{j,n+1}$. By comparing the degree $N-1$ terms of \eqref{eq:char_poly_equivalence}, the exponential term can be simplified to
\[\exp\left(\frac{\beta}{2}\sum_{i=1}^N (\lambda_{i,n} - \lambda_{i,n+1})\right).\]
Finally, for the Jacobian, note for all $1 \le i, j \le N$ we have
\[\frac{\partial}{\partial \lambda_{j,n+1}} \frac{p_{n+1}(\lambda_{i,n})}{p_{n}'(\lambda_{i,n})} = \frac{1}{\lambda_{j,n+1} -\lambda_{i,n}} \frac{p_{n+1}(\lambda_{i,n})}{p_{n}'(\lambda_{i,n})}.\]
The Cauchy determinant identity then yields
\begin{equation} \label{eq:jacobian_evaluation}
|\det(D\phi)| =  \left|\frac{\prod\limits_{1 \le i < j \le N} (\lambda_{i,n} - \lambda_{j,n})(\lambda_{i,n+1} - \lambda_{j,n+1})}{\prod\limits_{i,j=1}^N (\lambda_{i,n} - \lambda_{j,n+1})} \prod_{i=1}^N \frac{p_{n+1}(\lambda_{i,n})}{p_{n}'(\lambda_{i,n})} \right|= \prod_{1\le i < j \le N}\left|\frac{\lambda_{i,n+1} - \lambda_{j,n+1}}{\lambda_{i,n} - \lambda_{j,n}}\right|.
\end{equation}
Piecing everything together, \eqref{eq:conditional_probability_exp1} simplifies to
\begin{equation} \label{eq:conditional_probability_higher_rows}
\mathbb{P}(\lambda^{n+1}|\lambda^n) = \frac{(\beta/2)^{\frac{N\beta}2}} {\Gamma(\beta/2)^N} \cdot \frac{\prod\limits_{1\le i < j \le N} |\lambda_{i,n+1} - \lambda_{j,n+1}| \prod\limits_{a,b = 1}^N|\lambda_{a,n} - \lambda_{b,n+1}|^{\frac{\beta}2 - 1} \prod\limits_{i=1}^N e^{-\frac{\beta \lambda_{i,n+1}}2}}{\prod\limits_{1 \le i < j \le N} |\lambda_{i,n} - \lambda_{j,n}|^{\beta - 1} \prod\limits_{i=1}^N e^{-\frac{\beta \lambda_{i,n}}2}},
\end{equation}
The desired result now follows by induction.
\end{proof}
\begin{definition} (Laguerre $\beta$-Corners Process) Let $\beta$ be any positive parameter and suppose $n \ge N$ are two positive integers. The joint distribution of the eigenvalues $\lambda^1,\ldots, \lambda^n$ with density given by \eqref{eqn:beta_laguerre_corners_dist} will be referred to as the \textit{Laguerre $\beta$-corners process with $n$ rows centered at $N$ }. We will typically exclude ``centered at $N$'' for the sake of brevity and reserve the variable $N$ for this purpose to avoid confusion.
\end{definition}
Using the interpretation of the random variables $\lambda_{i,k}$ as eigenvalues, it is immediate for $\beta = 1,2,4$ that the restriction of \eqref{eqn:beta_laguerre_corners_dist} to a single level (i.e. $\lambda^k$ for some positive integer $k$) is given by the Laguerre $\beta$-ensemble. To extend this to all $\beta > 0$, we need the Dixon-Anderson integration identity. This identity was proven in \cite{And91} in the context of Selberg integrals, but can also be derived as a limit of a much older identity of Dixon \cite{Dix}.
\begin{lemma} (Dixon-Anderson) \label{lem:dixon-anderson} The following holds:
\begin{equation*} 
\int \cdots \int \prod_{1 \le i < j \le m} (u_i - u_j) \prod_{i,j} |u_i - v_j|^{s_j - 1}du_1\cdots du_m = \frac{\prod_{j=1}^{m+1} \Gamma(s_j)}{\Gamma\left(\sum_{i=1}^{m+1}s_i\right)} \prod_{1 \le i < j \le m+1} (v_i - v_j)^{s_i + s_j - 1},
\end{equation*}
where integration is performed over reals satisfying $v_i > u_i > v_{i+1}$ for $i = 1,\ldots, m$.
\end{lemma}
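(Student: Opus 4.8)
My plan is to prove the identity by induction on $m$, after first reducing to a convenient range of parameters. Since the integrand is dominated near each coalescence $u_i \to v_j$ by the locally integrable factor $|u_i - v_j|^{s_j-1}$ (integrable precisely because $s_j > 0$), and the $u_i$ are confined to pairwise disjoint intervals so no Vandermonde factor vanishes, the left-hand side converges absolutely and is holomorphic in $(s_1,\dots,s_{m+1})$ on $\{\operatorname{Re} s_j > 0\}$; the right-hand side is visibly holomorphic there as well. Hence it is enough to prove the identity for real $s_j$ in an arbitrarily small open set, and I may assume the $s_j$ generic. I will also use that both sides are covariant under an orientation-preserving affine change of variables $x \mapsto \alpha x + \gamma$: the left-hand side acquires a power of $\alpha$ from the Vandermonde, the Lebesgue measure, and the power factors that matches exactly the power acquired by $\prod_{i<j}(v_i - v_j)^{s_i+s_j-1}$, while $\prod_j \Gamma(s_j)/\Gamma(\sum_j s_j)$ is scale invariant; this lets one normalize two of the endpoints, though it is not essential. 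The base case $m=1$ is the Euler beta integral: the substitution $u_1 = v_2 + (v_1-v_2)t$ turns $\int_{v_2}^{v_1}(v_1-u_1)^{s_1-1}(u_1-v_2)^{s_2-1}\,du_1$ into $(v_1-v_2)^{s_1+s_2-1}\int_0^1 (1-t)^{s_1-1}t^{s_2-1}\,dt = (v_1-v_2)^{s_1+s_2-1}\,\Gamma(s_1)\Gamma(s_2)/\Gamma(s_1+s_2)$, which is the claim.

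For the inductive step I would follow the argument of \cite{And91}. Single out one integration variable — say $u_m$, which ranges over $(v_{m+1},v_m)$ — and note that the part of the integrand depending on $u_m$ is $(v_m - u_m)^{s_m-1}(u_m - v_{m+1})^{s_{m+1}-1}$ times the ``obstructing'' factor $\prod_{i<m}(u_i - u_m)\prod_{j<m}(v_j - u_m)^{s_j-1}$, which is not constant, so the $u_m$-integral is not yet a beta integral. The device that resolves this is to expand the obstructing factor by a Lagrange interpolation / partial-fraction identity in the remaining nodes, so that the full integrand is rewritten as a finite linear combination of integrands each of which, once the $u_m$-integration is carried out as a genuine beta integral, is recognizable as an $(m-1)$-dimensional Dixon--Anderson integrand — with one of the surviving $u$-variables promoted to a $v$-node and the parameters shifted accordingly. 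Applying the induction hypothesis to each piece yields a sum of products of Gamma functions times $\prod_{1\le i<j\le m+1}(v_i - v_j)^{s_i+s_j-1}$, and one must verify that the Gamma-function coefficients collapse to the single factor $\prod_j \Gamma(s_j)/\Gamma(\sum_j s_j)$.

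The \textbf{main obstacle} is precisely this final reassembly: arranging the interpolation expansion so that every term, after the one-dimensional beta integration, is genuinely a lower-rank instance of the identity with matching exponents (this is where the interlacing constraints $v_i > u_i > v_{i+1}$ are really used, since they control the signs and the ranges), and then checking that the resulting combination of Gamma functions telescopes to the claimed normalization. This bookkeeping is the technical heart of Anderson's proof. As a fallback for small $m$ — and the route alluded to in the statement — one can instead recognize the iterated integral as a very-well-poised ${}_3F_2$ and invoke Dixon's classical summation \cite{Dix}, but the inductive argument above is the one that scales to all $m$.
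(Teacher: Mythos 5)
The paper does not prove this lemma at all: it is imported from Anderson \cite{And91}, with the remark that it can also be obtained as a limit of an identity of Dixon \cite{Dix}. So the comparison is really between your outline and the known proofs. Your preliminary reductions are sound --- absolute convergence and holomorphy in the $s_j$ on $\{\mathrm{Re}\, s_j>0\}$, affine covariance of both sides, and the $m=1$ base case as an Euler beta integral are all correct. The genuine gap is the inductive step, and it is not a small one: everything that makes the identity true is packed into the two assertions you defer. First, the ``obstructing'' factor $\prod_{i<m}(u_i-u_m)\prod_{j<m}(v_j-u_m)^{s_j-1}$ is not a polynomial in $u_m$ (the exponents $s_j-1$ are generic), so a Lagrange interpolation or partial-fraction expansion in $u_m$ does not literally apply; no identity is actually exhibited that decomposes the integrand into lower-rank Dixon--Anderson integrands. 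Second, even granting such a decomposition, the telescoping of the resulting Gamma-function coefficients to $\prod_j\Gamma(s_j)/\Gamma(\sum_j s_j)$ is asserted, not verified. You flag this yourself as ``the technical heart'' of the argument, which is an accurate self-assessment: as written the proof does not close.

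It is also worth pointing out that the argument you attribute to \cite{And91} is not Anderson's, and his actual proof sidesteps your obstacle entirely. He changes variables from $(u_1,\dots,u_m)$ to the residues $t_j=\prod_{i}(v_j-u_i)\big/\prod_{k\neq j}(v_j-v_k)$, $j=1,\dots,m+1$, of the rational function $\prod_i(x-u_i)/\prod_j(x-v_j)$. Interlacing forces $t_j>0$, comparison of leading coefficients forces $\sum_j t_j=1$, the Cauchy determinant gives the Jacobian $\prod_{i<i'}(u_i-u_{i'})\,du=\pm\prod_{j<j'}(v_j-v_{j'})\,dt$ on the simplex, and $\prod_i|u_i-v_j|=t_j\prod_{k\neq j}|v_j-v_k|$ converts the remaining factors, so that the whole integral collapses in one step to the Dirichlet integral $\int_{\Delta}\prod_j t_j^{s_j-1}\,dt=\prod_j\Gamma(s_j)/\Gamma\bigl(\sum_j s_j\bigr)$. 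If you want a self-contained proof, that change of variables is the route to take: it needs no induction and no reassembly of Gamma factors.
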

\begin{corollary} For all $\beta > 0$ positive integers $k \le n$, the restriction of the Laguerre $\beta$-corners process with $n$ rows to its $k$-th level is given by the Laguerre $\beta$-ensemble with $N \wedge k$ values and parameter $\alpha = |N-k|$.
\end{corollary}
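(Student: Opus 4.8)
The plan is to extract the $k$-th marginal from \eqref{eqn:beta_laguerre_corners_dist} by integrating out the remaining levels $\lambda^1,\ldots,\lambda^{k-1},\lambda^{k+1},\ldots,\lambda^n$ one at a time, always peeling off the lowest or the highest of the levels that still survive, so that the level being removed interlaces with exactly one surviving neighbour. Each such one-level integral is reduced to the Dixon--Anderson identity (Lemma \ref{lem:dixon-anderson}) after adjoining at most two fictitious external points: one sent to $0^+$, which converts a factor $|u_i-v|^{s-1}$ into the power $u_i^{s-1}$, and one sent to $+\infty$ with its exponent $s$ growing so that $s/v\to\beta/2$, which — after dividing the integrand by the appropriate power of $v$ and applying Stirling's formula to the $\Gamma$-prefactor — converts it into the exponential Laguerre weight $e^{-\beta u_i/2}$. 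For levels $m<N$ consecutive levels carry $m$ and $m+1$ particles and Lemma \ref{lem:dixon-anderson} applies directly, with at most the fictitious point at $0$; for levels $m\ge N$ the interlacing is of rank-one type, $\lambda_{1,m+1}>\lambda_{1,m}>\cdots>\lambda_{N,m+1}>\lambda_{N,m}>0$, consecutive levels carry equally many particles, and both fictitious points are needed, the one at $0$ being legitimate precisely because $\lambda_{N,m}>0$.

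The computation closes up on account of the identity $(2-\beta)+(\beta-1)=1$: in \eqref{eqn:beta_laguerre_corners_dist} every level but the top carries its Vandermonde to the power $2-\beta$, and each application of Lemma \ref{lem:dixon-anderson} contributes an extra $\prod_{i<j}(v_i-v_j)^{\beta-1}$ to the surviving neighbour, raising that Vandermonde exponent to $1$, exactly what is needed for the next application. The power-weight exponent is tracked the same way: starting from the $\lambda_{i,N}^{\beta/2-1}$ attached to level $N$ in \eqref{eqn:beta_laguerre_corners_dist} (or, when $k<N$, from the trivial weight carried by the levels below $N$), each integration raises it by $\beta/2$, so the exponent arriving at level $k$ is exactly $\beta(|N-k|+1)/2-1$; meanwhile the weight $\prod_i e^{-\beta\lambda_{i,n}/2}$ of the top level is transported unchanged down to level $k$ by the $+\infty$-type steps. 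Once all levels but $\lambda^k$ have been removed, level $k$ carries Vandermonde exponent $\beta$ — its original exponent from \eqref{eqn:beta_laguerre_corners_dist} having absorbed a $\beta-1$ for each neighbour that was integrated out — together with the one-particle weight $\lambda_{i,k}^{\beta(|N-k|+1)/2-1}e^{-\beta\lambda_{i,k}/2}$, so it is distributed as the Laguerre $\beta$-ensemble with $N\wedge k$ particles and $\alpha=|N-k|$. The constants generated along the way are immaterial, since both sides are probability densities.

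The main obstacle will be organizing this bookkeeping together with a clean justification of the two degenerations of Lemma \ref{lem:dixon-anderson}. The $0^+$ limit is handled by monotone convergence, the interlacing regions $\{v_m>u_m>\varepsilon\}$ increasing to $\{v_m>u_m>0\}$, plus continuity of the integrand. The $+\infty$ limit uses in addition the convergence $(1-u/v)^{s}\to e^{-\beta u/2}$, uniform on compacts, as $v\to\infty$ with $s/v\to\beta/2$; a dominating tail bound of the form $Ce^{-cu}$ for dominated convergence; and the estimate $\Gamma(v+a)/\Gamma(v+b)\sim v^{a-b}$, which makes the various powers of $v$ cancel. One should also note that the levels at and above $N$ are genuinely supported on the rank-one interlacing region, so that the fictitious point at $0$ sits below every particle; this is built into \eqref{eqn:beta_laguerre_corners_dist} and is visible in the derivation around \eqref{eq:phi_defn}.
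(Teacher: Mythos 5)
Your proposal is correct and takes essentially the same route as the paper, whose proof is the one-line instruction to apply the Dixon--Anderson identity of Lemma \ref{lem:dixon-anderson} inductively with $u_i=\lambda_{i,m}$, $v_i=\lambda_{i,m+1}$, $s_i=\beta/2$; you have supplied the degenerate forms (external points sent to $0$ and to $+\infty$) that the paper leaves implicit, and your exponent bookkeeping ($(2-\beta)+2(\beta-1)=\beta$ for the Vandermonde, increments of $\beta/2$ for the power weight, transport of the exponential weight) is right. One indexing slip worth fixing: for a consecutive pair $m,m+1$ with $m\ge N$ both levels carry $N$ particles, so such a peeling step needs exactly \emph{one} fictitious point ($0$ if the lower level is removed, $+\infty$ if the upper one is); the steps that genuinely require both external points are those peeling a level $m\le N$ from above onto the $(m-1)$-particle level $m-1$, which occur only when $k<N$ --- this misattribution does not affect the argument.
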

\begin{proof}
This follows from inductively applying the Dixon-Anderson identity with $u_i = \lambda_{i,m}$, $v_i = \lambda_{i,m+1}$, and $s_i = \beta/2$ for all relevant $i$.
\end{proof}

\subsection{Zero Temperature Limit}
Ultimately, our interest will be in analyzing the covariances of Laguerre $\beta$-corners process, under the successive limit regimes $\beta \to \infty$, $N \to \infty$. This section will be devoted to obtaining an explicit description of the former limit. As it turns out this description has a close relation to the roots of the generalized Laguerre polynomials.
\par 
The generalized Laguerre polynomial\footnote{These are sometimes referred to as the \textit{associated Laguerre polynomials} in the literature--not to be confused with the associated polynomials defined in Section \ref{sec:Q_m_properties}}  $L^{(\alpha)}_n(x)$ is defined to be the polynomial solution to
\begin{equation} 
xy'' + (\alpha + 1 - x)y' + ny = 0,
\end{equation}
with leading coefficient $\frac{(-1)^n}{n!}$. For $\alpha \ge 0$, $\{L^{(\alpha)}_i(x)\}_{i \in \N}$ equivalently are the orthogonal polynomials with respect to the weight $x^{\alpha}e^{-x}$. In the case $\alpha < 0$, one can show
\begin{equation} \label{eq:laguerre_neg_alpha}
L^{(-\alpha)}_{n+\alpha}(x) = \frac{n!(-x)^{\alpha}}{(n+\alpha)!}L_{n}^{(\alpha)}(x).
\end{equation}
Setting $P_k(x) = (-1)^k k!L^{(N-k)}_k(x)$ to be the monic version, one also obtains the identity $P_{k}'(x) = kP_{k-1}(x)$. Sequences of polynomials satisfying this property are referred to as \textit{Appell sequences} and they will play an important role in Section 3 of this paper. For the Laguerre polynomial identities used here and throughout this paper see \cite{Sze} for reference. Finally, we define the Laguerre $\infty$-corners process which will be a central distribution in the remainder of the paper:
\begin{definition} (Laguerre $\infty$-Corners Process) The \textit{Laguerre $\infty$-corners process centered at $N$} is the distribution on sets of real numbers $\{\xi_{a,k}\}_{1 \le a \le k \le n}$ with $\xi_{a,k} = 0$ deterministically whenever $a > N$ and the remaining values distributed such that they have joint density proportional to:
\begin{equation} \label{eq:infinity_corners_dist}
\exp\left[-\frac{1}{4}\sum_{i=1}^n \left(\frac{\xi_{i,n}}{l_{i,n}}\right)^2 + \sum_{k=1}^{n-1}\left( \frac{1}{2}\sum_{1 \le i < j \le N \wedge k} \left(\frac{\xi_{j,k} - \xi_{i,k}}{l_{j,k} - l_{i,k}}\right)^2 - \frac{1}{4}\sum_{a=1}^{k \wedge N} \sum_{b=1}^{k+1 \wedge N} \left(\frac{\xi_{a,k} - \xi_{b,k+1}}{l_{a,k} - l_{b,k+1}}\right)^2 \right)\right],
\end{equation}
where $l_{i,k}$ is the $i$-th largest root of $L_k^{(N-k)}(x)$. It will be shown in the proof of Proposition \ref{prop:laguerre_infinity_corners} that this is the density of a zero mean multivariate Gaussian. As before we will typically exclude ``centered at $N$'' and refer to $\xi^k = (\xi_{1,k},\ldots, \xi_{k,k})$ as a \textit{level} of the ensemble. Furthermore, in the remainder of the paper when discussing the density of the Laguerre $\infty$-Corners process we will in actuality be referring to the density of the nonzero values. The presence of the zero values is for notational convenience such as in Corollary \ref{cor:xi_k_conditional_top_level}.\footnote{To avoid confusion, we note that \cite{GM} refers to the pair containing both the crystallized values and the Gaussian fluctuations as an $\infty$-\textit{corners process} rather than just the fluctuations.}
\end{definition}
\par 
With these definitions, we can now describe the $\beta \to \infty$ limit of the Laguerre $\beta$-corners process. The overarching message of the following proposition is that the Laguerre $\beta$-corners process crystallizes at roots of the Laguerre polynomials with fluctuations described by the Laguerre $\infty$-corners process. Analogous work for the Gaussian/Hermite case can be found in \cite{GM}. 

\begin{proposition} \label{prop:laguerre_infinity_corners} (Laguerre $\infty$-Corners) Suppose $\{\lambda_{i,k}\}_{1 \le i \le k \le n}$ is distributed according to the Laguerre $\beta$-corners process with $n$ rows. As $\beta \to \infty$, the distribution of the eigenvalues $\{\lambda_{i,k}\}_{1 \le i \le k \le n}$ converges weakly to a delta measure on the deterministic array $\{l_{i,k}\}_{1 \le i \le k \le n}$ where $l_{i,k}$ is the $i$-th largest root of $L^{(N-k)}_k(x)$. Furthermore, the distribution of the set
\[\{\sqrt{\beta}(\lambda_{i,k} - l_{i,k})\}_{1 \le i \le k \le n}\]
of fluctuations away from this array converges weakly to the Laguerre $\infty$-corners process with $n$ rows. 
\end{proposition}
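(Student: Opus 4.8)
The plan is to prove this via a Laplace-type asymptotic analysis of the explicit density \eqref{eqn:beta_laguerre_corners_dist} as $\beta\to\infty$. First I would rewrite \eqref{eqn:beta_laguerre_corners_dist} in exponential form, i.e. as $\exp(\beta\, F(\lambda) + G(\lambda))$, where $F$ collects the terms whose weight scales linearly in $\beta$ and $G$ is the lower-order remainder coming from the $\prod(\lambda_{i,k}-\lambda_{j,k})^{2-\beta}$ and the $|\lambda_{a,k}-\lambda_{b,k+1}|^{\beta/2-1}$ factors where the $O(1)$ exponents survive. Concretely, $F(\lambda) = -\tfrac12\sum_i \lambda_{i,n} + \tfrac12\sum_{k,i<j}\log|\lambda_{i,k}-\lambda_{j,k}|^{\,-1}\cdot(\text{from }2-\beta) + \tfrac12\sum_{k,a,b}\log|\lambda_{a,k}-\lambda_{b,k+1}|$ — I would be careful to group the $-\beta$ part of the middle product with the interlacing product, since both scale with $\beta$. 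The claim is then that $F$ is maximized precisely at the array $\{l_{i,k}\}$, and that the Hessian of $F$ at that point is negative definite, so that standard Laplace asymptotics give weak convergence to the delta measure at $\{l_{i,k}\}$ and, after rescaling fluctuations by $\sqrt\beta$, to the Gaussian with density $\propto \exp(\tfrac12 \xi^{\top}\!\operatorname{Hess}F(l)\,\xi)$.

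The key step is identifying the critical point of $F$ level by level. Fixing all variables except those in level $k$ (for $k < n$, so no exponential weight term), the gradient condition $\partial F/\partial\lambda_{i,k} = 0$ reads
\[
\sum_{j \ne i}\frac{1}{\lambda_{i,k}-\lambda_{j,k}} \;=\; \frac12\sum_{b}\frac{1}{\lambda_{i,k}-\lambda_{b,k-1}} \;+\; \frac12\sum_{b}\frac{1}{\lambda_{i,k}-\lambda_{b,k+1}}.
\]
Here is where I would invoke the Laguerre polynomial structure introduced in the excerpt: writing $p_k(x)=\prod_j(x-\lambda_{j,k})$, the left side is $\tfrac12 p_k''(\lambda_{i,k})/p_k'(\lambda_{i,k})$, so if each $p_k$ equals (a scalar multiple of) $L_k^{(N-k)}$, the Laguerre ODE \eqref{eq:laguerre_diffeq} turns $p_k''/p_k'$ at a root into an explicit rational function, and then the identity $P_k'(x)=kP_{k-1}(x)$ (the Appell property noted in the excerpt) plus the analogous three-term contiguous relation for Laguerre polynomials should make the right-hand sum telescope to exactly that rational function. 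So the array $\{l_{i,k}\}$ is a critical point; to know it is the global max one uses that $-F$ is convex on the interlacing polytope (each $-\log|x-y|$ term is convex and the signs work out because the interlacing product enters with a net positive coefficient after absorbing the $-\beta$ piece), giving uniqueness of the critical point.

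For the fluctuation statement I would substitute $\lambda_{i,k}=l_{i,k}+\beta^{-1/2}\xi_{i,k}$ and Taylor expand: the linear term vanishes by criticality, the $G$ term and the Jacobian of the substitution contribute only $O(\beta^{-1/2})$ corrections that disappear in the limit, and the quadratic term is $\tfrac12\xi^{\top}\operatorname{Hess}F(l)\,\xi$. Computing $\operatorname{Hess}F(l)$ entrywise, the off-diagonal within-level entries are $-(l_{i,k}-l_{j,k})^{-2}$, the cross-level entries are $\tfrac12(l_{a,k}-l_{b,k+1})^{-2}$, and the diagonal entries are the negative sums needed to match \eqref{eq:infinity_corners_dist}; one checks these coincide with the quadratic form in the exponent of \eqref{eq:infinity_corners_dist}, which in particular certifies that form is negative definite (equivalently, that \eqref{eq:infinity_corners_dist} is a genuine Gaussian density, as promised in the definition). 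A uniform-integrability / tail bound — e.g. dominating the density outside a $\beta^{-1/2}$-neighborhood of $\{l_{i,k}\}$ using strict concavity of $F$ away from its max — upgrades pointwise density convergence to weak convergence; this tightness/tail control is the part I expect to be the main technical obstacle, together with the bookkeeping to confirm that the $2-\beta$ and $\beta/2-1$ exponents really do distribute between $F$ and $G$ as claimed, especially on the boundary faces of the interlacing polytope where some factors degenerate.
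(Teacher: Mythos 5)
Your overall strategy --- write the density as $\exp(\beta h_1(\lambda) - h_2(\lambda))$, locate the maximizer of $h_1$ by setting logarithmic derivatives to zero and matching against Laguerre recursions, then Taylor expand to second order in $\xi = \sqrt{\beta}(\lambda - l)$ --- is exactly the paper's. Two bookkeeping points before the main issue: the coefficient of $\beta$ coming from $\prod(\lambda_{i,k}-\lambda_{j,k})^{2-\beta}$ is $-\sum_{i<j}\log|\lambda_{i,k}-\lambda_{j,k}|$, not $-\tfrac12\sum$, and your $F$ omits the term $\tfrac12\sum_i\log\lambda_{i,N}$ from $\prod\lambda_{i,N}^{\beta/2-1}$, which is precisely what produces the Laguerre ODE at level $N$ and pins down $P_N = L_N^{(0)}$ (up to scalar); without it the critical-point system does not single out the Laguerre array. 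The paper's version of your ``telescoping'' is a backward induction on $k\ge N$ using $P_{k+1}=P_k-P_k'$ anchored at the top-level equation, combined with the Appell identity $P_k'=kP_{k-1}$ for $k\le N$ (citing \cite{GM}).

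The genuine gap is negative definiteness. You assert $-F$ is convex on the interlacing polytope because ``the signs work out,'' but they do not: the within-level product contributes the \emph{convex} term $-\sum_{i<j}\log(\lambda_{i,k}-\lambda_{j,k})$ to $h_1$, while the interlacing product contributes the \emph{concave} term $+\tfrac12\sum\log|\lambda_{a,k}-\lambda_{b,k+1}|$, so $h_1$ is a mix of convex and concave pieces and is not manifestly concave. Correspondingly, the exponent of \eqref{eq:infinity_corners_dist} contains squared terms of both signs, and its negative definiteness is a real claim; it is not ``certified'' by matching the quadratic form with $\operatorname{Hess}F(l)$ (even granting a local max, criticality only yields negative \emph{semi}-definiteness, and your route to global maximality rests on the failed convexity claim). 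The paper closes this with a separate device: an explicit Gaussian integration identity that integrates out $\xi^1,\dots,\xi^{n-1}$ level by level, showing at each stage that the result is a nondegenerate Gaussian --- this simultaneously proves negative definiteness and produces the top-level marginal in Corollary \ref{cor:xi_k_conditional_top_level}. Your tightness step likewise leans on ``strict concavity of $F$ away from its max'' and so inherits the same gap; the paper instead gets weak convergence from the fact that the normalized density has $L^1$ norm $1$ together with dominated convergence, once negative definiteness is in hand.
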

\begin{proof}[Proof of Crystallization]
As $\beta \to \infty$, the terms in the Laguerre $\beta$-corners density \eqref{eqn:beta_laguerre_corners_dist} without $\beta$ dependence become negligible and the distribution converges weakly to a delta measure at
\begin{equation}  \label{eq:crystal_argmax_full}
\argmax \prod_{i=1}^N \lambda_{i,N}^{1/2}e^{-\frac{\lambda_{i,n}}2}\prod_{k=1}^{n-1} \prod_{1 \le i < j \le N \wedge k} |\lambda_{i,k}-\lambda_{j,k}|^{-1} \prod_{k=1}^{n-1}\prod_{a=1}^{k \wedge N} \prod_{b=1}^{k+1 \wedge N} |\lambda_{a,k} - \lambda_{b,k+1}|^{1/2}
\end{equation}
Let $y_{i,k}$ be the value of $\lambda_{i,k}$ under this argmax (our arguments will imply it is unique for all relevant $i,k$) and define $P_{k}(x)$ to be the monic polynomial with roots $y_{1,k},\ldots, y_{k\wedge N,k}$ for each $k \le n$. It follows from the proposition statement and \eqref{eq:laguerre_neg_alpha}, that we want $P_k(x) = (-1)^k k!L^{(N-k)}_k(x)$ for $k \le N$ and $P_{k}(x) = (-1)^N N! L^{(k-N)}_N(x)$ for $k \ge N$. These desired values satisfy the recursive identities $P_k'(x) = kP_{k-1}(x)$ and $P_{k+1}(x) = P_k(x) - P_{k}'(x)$ respectively. To prove the result then, it suffices to show these identities along with $P_N(x) = L^{(0)}_N(x)$.
\par 
In \cite[sec. 3.4]{GM} on the crystallization of corners processes, the authors showed the first recursive identity holds by equating the logarithmic derivatives of \eqref{eq:crystal_argmax_full} with zero. Furthermore, the same paper shows that
\[\prod_{k=1}^{N-1} \prod_{1 \le i < j \le N \wedge k} |y_{i,k}-y_{j,k}|^{-1} \prod_{k=1}^{N-1}\prod_{a=1}^{k \wedge N} \prod_{b=1}^{k+1 \wedge N} |y_{a,k} - y_{b,k+1}|^{1/2} = \prod_{1 \le i < j \le N} |y_{i,N}-y_{j,N}|.\]
Thus \eqref{eq:crystal_argmax_full} reduces to evaluating 
\begin{equation}  \label{eq:crystal_argmax_simplified}
\argmax \prod_{i=1}^N \lambda_{i,N}^{1/2}e^{-\frac{\lambda_{i,n}}2}\prod_{k=N+1}^{n-1} \prod_{1 \le i < j \le N \wedge k} |\lambda_{i,k}-\lambda_{j,k}|^{-1} \prod_{k=N}^{n-1}\prod_{a=1}^{k \wedge N} \prod_{b=1}^{k+1 \wedge N} |\lambda_{a,k} - \lambda_{b,k+1}|^{1/2}.
\end{equation}
Taking logarithmic derivatives at $\lambda_{i,n}$ for $i \le N$, we get 
\begin{equation} \label{eq:top_row_log_deriv} 0 = -\frac{1}{2} + \frac{1}{2}\sum_{a = 1}^{n} \frac{1}{y_{i,n} - y_{a,n-1}} = -\frac{1}{2} + \frac{P_{n-1}'(y_{i,n})}{2P_{n-1}(y_{i,n})},
\end{equation}
which implies the second recursion at $k = n-1$. For other $k$ we proceed by a backward induction. Taking logarithmic derivatives at $\lambda_{i,k}$ again for $i \le N$ we get
\begin{align*}
    0  &= -\sum_{j \neq i} \frac{1}{y_{i,k} - y_{j,k}} + \frac{1}{2}\sum_{a=1}^N \frac{1}{y_{i,k} - y_{a,k+1}} + \frac{1}{2}\sum_{a=1}^N \frac{1}{y_{i,k} - y_{a,k-1}} \\
    &= - \frac{P_k''(y_{i,k})}{2P_k'(y_{i,k})} + \frac{P_{k+1}'(y_{i,k})}{2P_{k+1}(y_{i,k})} + \frac{P_{k-1}'(y_{i,k})}{2P_{k-1}(y_{i,k})}.
\end{align*}
Substituting $P_{k+1}(y_{i,k}) = P_k(y_{i,k}) - P_k'(y_{i,k})$ reduces this to the same form as \eqref{eq:top_row_log_deriv}, which completes the induction. Finally, the logarithmic derivative at $\lambda_{i,N}$ for $i \le N$ yields
\[0 = \frac{1}{2y_{i,N}} + \frac{1}{2}\sum_{a=1}^N \frac{1}{y_{i,N} - y_{a,N+1}} = \frac{1}{2y_{i,N}} + \frac{P_{N+1}'(y_{i,N})}{2P_{N+1}(y_{i,N})} = \frac{1}{2y_{i,N}} + \frac{P_{N}'(y_{i,N}) - P_{N}''(y_{i,N})}{-2P_{N}'(y_{i,N})},\]
which is equivalent to the differential equation \eqref{eq:laguerre_diffeq} defining $L_N^{(0)}(x)$.
\end{proof}
\begin{proof}[Proof of Gaussian Fluctuations]
Write the density \eqref{eqn:beta_laguerre_corners_dist} in the form
\[\frac{1}{Z(\beta)}\exp\left(\beta h_1(\lambda) - h_2(\lambda)\right),\]
where $Z(\beta)$ is the normalization constant,
\[h_1(\lambda) = \frac{1}{2}\sum_{i=1}^N (\log\lambda_{i,N}-\lambda_{i,n}) - \sum_{k=1}^{n-1}\sum_{1 \le i < j \le N \wedge k} \log|\lambda_{i,k} - \lambda_{j,k}| + \frac{1}{2} \sum_{k=1}^{n-1}\sum_{a = 1}^{k \wedge N}\sum_{b=1}^{k+1 \wedge N} \log|\lambda_{a,k} - \lambda_{b,k+1}|,\]
and $h_2(\lambda)$ contains the remaining terms. Now, consider the Taylor expansion of the density about $l_{i,k}$ by setting
\[\lambda_{i,k} = l_{i,k} + \frac{1}{\sqrt{\beta}}\xi_{i,k}. \]
We have
\begin{align} h_1(\lambda) - h_1(l) =& \frac{1}{2}\sum_{i=1}^N \left(\log\left(1 + \frac{\xi_{i,N}}{l_{i,N}\sqrt{\beta}}\right)-\frac{\xi_{i,n}}{\sqrt{\beta}}\right) - \sum_{k=1}^{n-1}\sum_{1 \le i < j \le N \wedge k} \log\left|1 + \frac{\xi_{i,k} - \xi_{j,k}}{\sqrt{\beta}(\lambda_{i,k} - \lambda_{j,k})}\right| \nonumber \\
&+ \frac{1}{2} \sum_{k=1}^{n-1}\sum_{a = 1}^{k \wedge N}\sum_{b=1}^{k+1 \wedge N} \log\left|1 + \frac{\xi_{a,k} - \xi_{b,k+1}}{\sqrt{\beta}(\lambda_{a,k} - \lambda_{b,k+1})}\right|. \label{eq:fluctuations_xi_dependent_terms}
\end{align}
In the Taylor expansion of the right-hand side of \eqref{eq:fluctuations_xi_dependent_terms}, the first order terms have coefficients given by the logarithmic derivatives determining \eqref{eq:crystal_argmax_full}. Hence, they vanish. Let $g(\xi)$ be the second order terms and note $\exp(\beta g(\xi))$ is precisely \eqref{eq:infinity_corners_dist}.
\par 
Now, suppose we knew that \eqref{eq:infinity_corners_dist} is the distribution of a multivariate Gaussian; that is, $g(\xi) < 0$ whenever $\xi \neq 0$. Then this Taylor expansion yields
\[
\beta(h_1(\lambda) - h_1(l)) = g(\xi) + O(\beta^{-1/2}),
\]
where the bound is uniform whenever $\xi$ lies in a compact subset $C$. Under the same conditions, $h_2(y) = h_2(l) + O(\beta^{-1/2})$, allowing us to write the density as
\[
\exp(\beta h_1(l) + h_2(l))Z(\beta)^{-1}\exp(g(\xi) + O(\beta)^{-1/2}).
\]
As this has $L^1$ norm equal to 1 and decays rapidly as $|\xi|\to\infty$, the normalization $\exp(\beta h_1(l) + h_2(l))Z(\beta)^{-1}$ must converge to a constant. From here, for any $f$ with compact support, dominated convergence implies
\[\lim_{\beta \to \infty} \exp(\beta h_1(l) + h_2(l))Z(\beta)^{-1}\int f(\xi)\exp(g(\xi) + O(\beta)^{-1/2})d\xi = \frac{1}{Z}\int f(\xi) \exp(g(\xi))d\xi,\]
where $Z \neq 0$ is some constant. Hence, convergence holds.
\par
To see that \eqref{eq:infinity_corners_dist} is in fact the distribution of a well-defined multivariate Gaussian, we use the same integration identity that \cite{GM} used in the Hermite case:
\begin{equation} 
\int\cdots \int \exp\left(-\sum_{a=1}^k\sum_{b=1}^{k-1} \left(\frac{\zeta_{a}^k - \zeta_{b}^{k-1}}{x_a^k - x_b^k}\right)^2\right)d\zeta_{1}^{k-1}\cdots d\zeta_{k-1}^{k-1} = \frac{1}{Z'}\exp\left(-2\sum_{1 \le i < j \le k}\left(\frac{\zeta_i^k - \zeta_j^k}{x_{i}^k-x_j^k}\right)^2\right),
\end{equation}
where $k$ is some positive integer, $\zeta_1^k, \ldots, \zeta_k^k$ and $x_1^k > x_{2}^k > \cdots > x_k^k$ are reals, and $Z'$ is some normalizing constant independent of the $\zeta^k_i$. Using this identity, we can iteratively compute \eqref{eq:infinity_corners_dist} by integrating against $\xi^1, \xi^2,\ldots, \xi^{n-1}$ successively. In particular, for $k \le N$ we set $\zeta^{k-1}, \zeta^{k} = \xi^{k-1}, \xi^{k}$ and $x_i^{k-1}, x_i^{k} = l_{i,k-1}, l_{i,k}$ respectively for all $i \le k$. For $k > N$, we set $\zeta^N_i = \xi_{i,k-1}, \zeta^{N+1}_i = \xi_{i,k}, x_i^N = l_{i,k-1}, x_i^{N+1} = l_{i,k}$ for all $i \le N$ and then take $x_{N+1}^{N+1} \to -\infty$.
\end{proof}

\begin{corollary} \label{cor:xi_k_conditional_top_level}
Let $\{\xi_{i,k}\}_{1 \le i \le k \le n}$ be distributed according to the Laguerre $\infty$-corners process with $n$ rows. The distribution of $\xi^k$ conditioned on $\xi^{k+1}$ has density proportional to
\[\exp\left(-\frac{1}{4} \sum_{a=1}^{N \wedge k} \sum_{b=1}^{k+1} \left(\frac{\xi_{a,k} - \xi_{b,k+1}}{l_{a,k} - l_{b,k+1}}\right)^2\right).\]
Furthermore, the top level $\xi^n$ has density proportional to
\[\exp\left(-\frac{n - N + 1}{4} \sum_{i=1}^{N} \left(\frac{\xi_{i,n}}{l_{i,n}}\right)^2 - \frac{1}{2}\sum_{1 \le i < j \le N} \left(\frac{\xi_{i,n} - \xi_{j,n}}{l_{i,n} - l_{j,n}}\right)^2\right).\]
\end{corollary}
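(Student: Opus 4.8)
The plan is to obtain both statements by Taylor expanding the corresponding quantities for the Laguerre $\beta$-corners process about the crystallization lattice $l$ and passing to the $\beta\to\infty$ limit, exactly as in the proof of Proposition \ref{prop:laguerre_infinity_corners}. (One could instead work directly with the Gaussian density \eqref{eq:infinity_corners_dist} and iterate the integration identity used there, but the bookkeeping is cleaner on the $\beta$-side.)

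For the first part I would first record an explicit formula for $\bb{P}(\lambda^k\mid\lambda^{k+1})$ in the Laguerre $\beta$-corners process. Since the process is Markov in the level index — this is built into \eqref{eq:conditional_probability_lower_rows}–\eqref{eq:conditional_probability_higher_rows} — this conditional equals $\bb{P}(\lambda^k\mid\lambda^{k+1},\dots,\lambda^n)$, and the same holds for the $\infty$-corners process; so it suffices to identify the $\beta\to\infty$ limit of $\bb{P}(\lambda^k\mid\lambda^{k+1})$ under the scaling $\lambda_{i,m}=l_{i,m}+\xi_{i,m}/\sqrt\beta$. For $k<N$ one marginalizes $\lambda^1,\dots,\lambda^{k-1}$ out of \eqref{eq:conditional_probability_lower_rows} by iterating Dixon–Anderson (Lemma \ref{lem:dixon-anderson}), obtaining
\[\bb{P}(\lambda^k\mid\lambda^{k+1}) \;\propto\; \prod_{1\le i<j\le k}(\lambda_{i,k}-\lambda_{j,k})\prod_{a=1}^{k}\prod_{b=1}^{k+1}|\lambda_{a,k}-\lambda_{b,k+1}|^{\frac{\beta}{2}-1};\]
for $k\ge N$ one instead applies Bayes' rule to \eqref{eq:conditional_probability_higher_rows} together with the single-level marginals (the Corollary following Proposition \ref{prop:laguerre_infinity_corners}: $\lambda^m$ is the Laguerre $\beta$-ensemble with $\alpha=|N-m|$), and after the Gaussian factors $e^{-\beta\lambda_{i,k}/2}$ and the Vandermonde powers cancel one is left with
\[\bb{P}(\lambda^k\mid\lambda^{k+1}) \;\propto\; \prod_{1\le i<j\le N}(\lambda_{i,k}-\lambda_{j,k})\prod_{a=1}^{N}\lambda_{a,k}^{\frac{\beta}{2}(k-N+1)-1}\prod_{a=1}^{N}\prod_{b=1}^{N}|\lambda_{a,k}-\lambda_{b,k+1}|^{\frac{\beta}{2}-1}.\]
Expanding $\log$ of the right-hand side to second order about $\lambda^k=l^k,\ \lambda^{k+1}=l^{k+1}$: the first-order terms in $\xi^k$ vanish because $l^k$ is the crystallization argmax (checked as in the crystallization proof, using $P_{k+1}'=(k+1)P_k$ when $k<N$ and the Laguerre identity $\tfrac{d}{dx}[x^{\alpha}L_n^{(\alpha)}(x)]=(n+\alpha)x^{\alpha-1}L_n^{(\alpha-1)}(x)$ when $k\ge N$), and the terms involving only $\xi^{k+1}$ are absorbed into the normalization of the conditional. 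What remains, in both cases, is $-\tfrac14\sum_{a=1}^{N\wedge k}\sum_{b=1}^{k+1}\bigl(\tfrac{\xi_{a,k}-\xi_{b,k+1}}{l_{a,k}-l_{b,k+1}}\bigr)^2$, where for $b>N$ one uses $\xi_{b,k+1}=l_{b,k+1}=0$ together with $-\tfrac{k-N+1}{4}\tfrac{\xi_{a,k}^2}{l_{a,k}^2}=-\tfrac14\sum_{b=N+1}^{k+1}\bigl(\tfrac{\xi_{a,k}}{l_{a,k}}\bigr)^2$. A dominated-convergence argument as in Proposition \ref{prop:laguerre_infinity_corners}, applied to the conditional density in $\xi^k$ with $\xi^{k+1}$ held fixed, then promotes this to convergence of densities and yields the first claim.

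For the second part the same argument applies with no marginalization: the top level $\lambda^n$ is the Laguerre $\beta$-ensemble with $N$ values and $\alpha=n-N$ (again the Corollary following Proposition \ref{prop:laguerre_infinity_corners}), so the single-level crystallization/fluctuation computation — argmax at the roots $l_{i,n}$ of $L_N^{(n-N)}$, Hessian of $h_1$ equal to the stated quadratic form — shows that the rescaled fluctuations converge to the Gaussian with density $\propto\exp\!\bigl(-\tfrac{n-N+1}{4}\sum_i(\xi_{i,n}/l_{i,n})^2-\tfrac12\sum_{i<j}\bigl((\xi_{i,n}-\xi_{j,n})/(l_{i,n}-l_{j,n})\bigr)^2\bigr)$. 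Since by Proposition \ref{prop:laguerre_infinity_corners} the level-$n$ marginal of the $\infty$-corners process is exactly this weak limit, the formula follows.

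The main obstacle is the $k\ge N$ case: one has to push the Bayes'-rule cancellation through carefully, and then recognize that the single power $\lambda_{a,k}^{\frac{\beta}{2}(k-N+1)-1}$ — which appears because the $(k-N+1)$-fold zero eigenvalue of $X_k^*X_k$ has been folded into the weight — is precisely what supplies the $b=N+1,\dots,k+1$ terms in the stated sum (and, after marginalization, what inflates the coefficient of the single-level term at the top from $\tfrac14$ to $\tfrac{n-N+1}{4}$). Once this correspondence between the multiplicity of the zero eigenvalue and the range of the $b$-summation is in place, both parts reduce to the routine Taylor expansion already carried out for Proposition \ref{prop:laguerre_infinity_corners}.
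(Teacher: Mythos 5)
Your argument is correct, and for the first claim it is essentially the paper's own proof: the paper simply says to apply the Taylor expansion $\lambda_{i,k}=l_{i,k}+\xi_{i,k}/\sqrt{\beta}$ to the conditional densities \eqref{eq:conditional_probability_lower_rows} and \eqref{eq:conditional_probability_higher_rows}, and your write-up supplies the steps the paper leaves implicit (the Markov property in the level index, the Dixon--Anderson marginalization for $k<N$, the Bayes-rule manipulation for $k\ge N$, and the identification of the folded-in power $\lambda_{a,k}^{\frac{\beta}{2}(k-N+1)-1}$ with the $b=N+1,\dots,k+1$ terms of the sum). The vanishing of the first-order terms via $P_{k+1}'=(k+1)P_k$, equivalently $\frac{d}{dx}\bigl[x^{\alpha}L_n^{(\alpha)}(x)\bigr]=(n+\alpha)x^{\alpha-1}L_n^{(\alpha-1)}(x)$, checks out. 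For the second claim your route differs from the paper's: the paper obtains the top-level density by integrating the first $n-1$ rows out of the $\infty$-corners density \eqref{eq:infinity_corners_dist} using the Gaussian integration identity \eqref{eq:determinant_equality}-adjacent formula (2.15), whereas you Taylor-expand the single-level Laguerre $\beta$-ensemble marginal with $\alpha=n-N$ (available from the Dixon--Anderson corollary) and invoke Proposition \ref{prop:laguerre_infinity_corners} to identify the level-$n$ marginal of the $\infty$-corners process with that weak limit. Your route is arguably cleaner for extracting the coefficient $\frac{n-N+1}{4}$, since the weight $x^{\frac{\beta}{2}(n-N+1)-1}e^{-\beta x/2}$ hands it to you directly, while the paper's route requires tracking how the diagonal terms accumulate through the $k>N$ integrations; the cost is that you must also justify that conditionals and marginals pass through the $\beta\to\infty$ limit, which your dominated-convergence remark and the appeal to Proposition \ref{prop:laguerre_infinity_corners} adequately cover. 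Both approaches are sound.
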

\begin{proof}
The conditional distribution follows from applying the Taylor expansion $\lambda_{i,k} = l_{i,k} + \frac{1}{\sqrt{\beta}}\xi_{i,k}$ to equations \eqref{eq:conditional_probability_lower_rows}, \eqref{eq:conditional_probability_higher_rows}. The density of the top row follows from integrating out the first $n-1$ rows of \eqref{eq:infinity_corners_dist} using the procedure described at the end of the proof of Proposition \ref{prop:laguerre_infinity_corners}.
\end{proof}

\section{Jumping Process and Orthogonal Polynomials}
\label{sec:jumping_process}
In this section, we introduce the main tools we will use to analyze the hard edge limit. Many of the techniques we apply here have their origin in \cite{GK}. The key insight of that paper was that $\beta = \infty$ limit of many corners processes can be represented in terms of a certain random walk on the roots of an Appell sequence.
\par 
Following \cite[sec. 3]{GK}, we will begin by showing how such a random walk on the roots of Laguerre polynomials arises from the Laguerre $\infty$-corners process. After this, Section 3.2 will review the techniques established in \cite[sec. 4]{GK} for diagonalizing the transition matrix of these random walks in terms of a certain family of orthogonal polynomials. Unfortunately, our case necessitates us to consider Appell sequences consisting of polynomials with repeated roots, whereas the previous work assumes distinct roots. As a result, Section 3.3 will be devoted to demonstrating how the proofs of \cite{GK} can be extended to our case through some extra tracking of residues. In Section 3.4, we will exploit the connection between the orthogonal polynomials and dual polynomials to prove several useful properties including two contour integral representations of said orthogonal polynomials. Finally, in Section 3.5, we will combine this work along with a theorem of \cite{AHV} to demonstrate how one can decompose the covariances of the Laguerre $\infty$-corners process in terms of the orthogonal polynomials.

\subsection{Additive Polymer}
\label{sec:additive_polymer}
For the remainder of this paper, $\{\xi_{i,k}\}_{1 \le i \le k \le n}$ will be assumed to be distributed according to the Laguerre $\infty$-corners process with $n$ rows and $l_{i,k}$ will be the $i$-th largest root of $L_k^{(N-k)}(x)$. Consider the conditional distribution of $\xi^k$ given $\xi^{k+1}$ from Corollary \ref{cor:xi_k_conditional_top_level} for some $k \le n-1$. Completing the squares gives that this density is proportional to
\[\prod_{a=1}^{N \wedge k}\exp\left(-\frac{1}{4}\left(\sum_{b' = 1}^{k+1} (l_{a,k} - l_{b',k+1})^{-2} \right)\left(\xi_{a,k} - \sum_{b=1}^{k+1}\xi_{b,k+1}\frac{(l_{a,k} - l_{b,k+1})^{-2}}{\sum_{b' = 1}^{k+1} (l_{a,k} - l_{b',k+1})^{-2}}\right)^2\right),\]
where the normalization constant may depend on $\xi^{k+1}$. 
Let $P_k(x) = (-1)^kk!L_{k}^{(N - k)}(x)$. For any $a \le N \wedge k$, we evaluate
\begin{equation} \label{eq:weight_evaluation}
\sum_{b' = 1}^{k+1} (l_{a,k} - l_{b',k+1})^{-2} = -\frac{P_{k+1}''(l_{a,k})}{P_{k+1}(l_{a,k})} = \frac{k+1}{l_{a,k}},
\end{equation}
where the second equality follows from the Laguerre polynomial differential equation \eqref{eq:laguerre_diffeq}.
Now let:
\begin{equation} \label{eq:laguerre_trans_prob_one_level}
\alpha_{a,b}^{k} = \begin{cases} \displaystyle\frac{(l_{a,k} - l_{b,k+1})^{-2}}{\sum_{b' = 1}^{k+1} (l_{a,k} - l_{b',k+1})^{-2}} & a \le N \\
\\
\displaystyle\frac{\1(b > N)}{k - N} & a > N.
\end{cases}
\end{equation}
whenever $a \le k, b \le k+1$. The case $a > N$ will not actually matter for our purposes here, but will prove useful in the following subsections. Using this new terminology we can rewrite the density in the simple form
\[\prod_{a=1}^{N \wedge k}\exp\left(-\frac{1}{2}\left( \frac{2l_{a,k}}{k+1} \right)^{-1}\left(\xi_{a,k} - \sum_{b=1}^{k+1}\alpha_{a,b}^k\xi_{b,k+1} \right)^2\right).\]
Thus we derive \eqref{eq:intro_to_transition_matrices} for the Laguerre ensemble:
\begin{equation} \label{eq:xi_decomposition_single_level}
\xi_{a,k} = \sum_{b=1}^{k+1}\alpha_{a,b}^k\xi_{b,k+1}  + \eta_{a,k},
\end{equation}
where $\eta_{a,k}$ are independent mean zero Gaussians with
\begin{equation} \label{eq:eta_variance}
\var(\eta_{a,k}) = \frac{2l_{a,k}}{k+1}.
\end{equation}
Note for $a > N$, the Gaussian $\eta_{a,k}$ is degenerate. For each $k \le n$, define $\mathcal{X}_k$ to be the set of roots of $P_k(x)$ counted with multiplicity. The $k \times (k+1)$ matrix $A_k$ with elements $\alpha_{a,b}^k$ is then stochastic and maps $\mathcal{X}_k$ to $\mathcal{X}_{k+1}$. Setting $\eta^k = (\eta_{1,k},\ldots \eta_{k,k})$, \eqref{eq:eta_variance} becomes: 
\[\xi^{k} = A_k\xi^{k+1} + \eta^k.\]
Iterating this yields
\[\xi^{k} = (A_k\cdots A_{n-1})\xi^n + \sum_{j = k}^{n-1}(A_k \cdots A_{j-1})\eta^j.\]
Finally, define the \textit{diffusion kernels}
\[K^{k,l}(a \to b) = (A_k\cdots A_{l-1})_{a,b}.\]
We can summarize all of this work as follows:
\begin{proposition} \label{prop:laguerre_infinity_additive_polymer} Let $\{\xi_{i,k}\}_{1 \le i \le k \le n}$ be distributed according to the Laguerre $\infty$-corners process with $n$ rows. There exists an array $\{\eta_{i,k}\}_{1 \le i \le k \le n}$ of independent mean zero Gaussians with variances given by \eqref{eq:eta_variance} such that for any $k \le n$ and $a \le N \wedge k$ one has
\[\xi_{a,k} = \sum_{b=1}^n K^{k,n}(a \to b) \xi_{b,n} + \sum_{j=k}^{n-1}\sum_{b=1}^{j} K^{k,j}(a\to b) \eta_{b,j}.\]
\end{proposition}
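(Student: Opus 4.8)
The plan is to iterate the one-step relation \eqref{eq:xi_decomposition_single_level} derived above and to absorb the resulting products of the stochastic matrices $A_k$ into the diffusion kernels $K^{k,l}$. All of the analytic content — the completing-the-square computation and the evaluation \eqref{eq:weight_evaluation} via the Laguerre differential equation — has already been carried out, so what remains is essentially bookkeeping together with one structural observation about independence.

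First I would promote \eqref{eq:xi_decomposition_single_level} to the full vector identity $\xi^k = A_k\xi^{k+1} + \eta^k$ in $\R^k$, valid including the degenerate coordinates $a > N$. For such $a$ (necessarily with $k > N$) one has $\xi_{a,k} = 0$ deterministically, the Gaussian $\eta_{a,k}$ is degenerate, and by the second case of \eqref{eq:laguerre_trans_prob_one_level} one computes $\sum_{b=1}^{k+1}\alpha_{a,b}^k \xi_{b,k+1} = \tfrac{1}{k-N}\sum_{b=N+1}^{k+1}\xi_{b,k+1} = 0$, so the identity reduces to $0 = 0$. Hence $\xi^k = A_k\xi^{k+1} + \eta^k$ holds as an honest equality of random vectors for every $k \le n-1$, with the variances \eqref{eq:eta_variance}.

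Second, I would record that the array $\{\eta^k\}_{k=1}^{n-1}$ is jointly independent and independent of $\xi^n$. This is where the Markov structure of the Laguerre $\infty$-corners process enters: by Corollary \ref{cor:xi_k_conditional_top_level} the conditional law of $\xi^k$ given $\{\xi^{j}\}_{j>k}$ depends only on $\xi^{k+1}$, so $\xi^n, \xi^{n-1}, \ldots, \xi^1$ is a Markov chain. Since $\eta^k = \xi^k - A_k\xi^{k+1}$ is a deterministic function of $(\xi^k, \xi^{k+1})$ whose conditional distribution given $\xi^{k+1}$ does not depend on $\xi^{k+1}$ — precisely the output of the completing-the-square computation preceding \eqref{eq:xi_decomposition_single_level} — a standard induction over levels shows the $\eta^k$ are mutually independent. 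Then, iterating $\xi^k = A_k\xi^{k+1} + \eta^k$ downward from level $n$ gives
\[\xi^k = (A_k\cdots A_{n-1})\xi^n + \sum_{j=k}^{n-1}(A_k\cdots A_{j-1})\eta^j,\]
with the convention that the empty product is the identity, and reading off the $a$-th coordinate with $K^{k,l}(a\to b) = (A_k\cdots A_{l-1})_{a,b}$ (so $K^{k,k} = I$) yields the asserted formula for $a \le N\wedge k$.

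The argument is routine; the only point genuinely requiring care is the step from "each $\eta^k$ is independent of $\xi^{k+1}$" to "the $\eta^k$ are independent across all levels", which rests on the factorization of \eqref{eq:infinity_corners_dist} over consecutive levels and hence the Markov property. I would also double-check the edge conventions — the empty-product normalization $K^{k,k}=I$ and the vanishing of the contributions of the degenerate coordinates $a > N$ — so that the displayed identity is literally correct for every admissible pair $(k,a)$.
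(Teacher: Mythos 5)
Your proposal is correct and follows essentially the same route as the paper: the paper's argument is precisely the completing-the-square derivation of $\xi^k = A_k\xi^{k+1} + \eta^k$ from the conditional density in Corollary \ref{cor:xi_k_conditional_top_level}, followed by iteration down from level $n$, with the proposition stated as a summary of that work. The only additions you make --- verifying the degenerate coordinates $a > N$ contribute nothing and spelling out via the Markov property why the $\eta^k$ are jointly independent across levels rather than merely level-by-level --- are details the paper leaves implicit, and both are handled correctly.
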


\subsection{General Jumping Processes}
\label{sec:general_jumping_processes}
We will now describe a more general framework that the above results fit into. Let $P_0(x), P_1(x),\ldots$ be an Appell sequence and for each $k \ge 0$ let $\mathcal{X}_k$ be the roots of $P_k(x)$ counted with multiplicity. Furthermore, for $x \in \mathcal{X}_k$, let $r_k(x)$ be the multiplicity of $x$ and let $\mathcal{Y}_k$ be the set of $x \in \mathcal{X}_k$ with $r_{k+1}(x) = 0$. All results in this subsection were originally established and proven in \cite{GK} under the condition $r_k(x) = 1$ for all $k \ge 1, x \in \mathcal{X}_k$. Our proofs will focus on extending said results to the general case.

\begin{definition} (Jumping Process) The \textit{jumping process} associated to the Appell sequence $P_0(x),P_1(x),\ldots$ is the Markov process with state space $\mathcal{X}_k$ at time $k$ and transition probability from $x \in \mathcal{X}_k$ to $y \in \mathcal{X}_{k+1}$ given by
\[\mathbb{P}_k(x \to y) = \begin{cases} -(x-y)^{-2}\frac{P_{k+1}(x)}{P_{k+1}''(x)} & r_{k+1}(y) = 1\\  \\ \displaystyle\frac{\1(x = y)}{r_{k+1}(y)}& r_{k+1}(y) > 1.
\end{cases}\]
\end{definition}
The first equality in \eqref{eq:weight_evaluation} implies that these values are indeed well-defined transition probabilities. The case where $r_{k+1}(y) > 1$ is somewhat redundant as it equals the limit of the case $r_{k+1}(y) = 1$ as $y \to x$. The key to diagonalizing the transition matrices of jumping processes is through their action on functions:
\begin{definition} ($\mathcal{F}_k, D_k$) Let $\mathcal{F}_k$ denote the set of functions from $\mathcal{X}_k \to \R$ such that any two equal values of $\mathcal{X}_k$ map to the same value in $\R$. Note that all such functions can be realized by polynomials. Now define $D_k: \mathcal{F}_k \to \mathcal{F}_{k+1}$ by
\[
(D_kf)(y) = \sum_{x \in \mathcal{X}_k} \mathbb{P}_k(x \to y)f(x).
\]
\end{definition}
\begin{proposition}{\cite{GK}} \label{prop:op_poly_preservation} For $k \ge 1$ and each $m = 0,1,\ldots, |\mathcal{Y}_k|-1$ the linear operator $D_k$ preserves the space of polynomials of degree at most $m$. In more detail,
\[
D_kx^m = \left(1 - \frac{m+1}{k+1}\right)x^m + \textit{ (polynomial of degree $\le m$)}.
\]
\end{proposition}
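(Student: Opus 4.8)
The plan is to compute $D_k x^m$ directly from the definition of the jumping process, treating the generic (multiplicity one) contributions and the repeated-root contributions separately, and to recognize the resulting expression as a partial fraction / logarithmic derivative identity that collapses because $P_{k+1}$ is an Appell polynomial. First I would fix $m \le |\mathcal{Y}_k| - 1$ and a target point $y \in \mathcal{X}_{k+1}$. If $r_{k+1}(y) > 1$ then $\mathbb{P}_k(x\to y)$ is supported on $x=y$ (note $y\in\mathcal{X}_k$ in this case, since a repeated root of $P_{k+1}$ persists as a root of $P_k$ via $P_{k+1}'=(k+1)P_k$... more precisely by the Appell relation any root of $P_{k+1}$ of multiplicity $\ge 2$ is a root of $P_k$), and there $(D_k x^m)(y) = \frac{1}{r_{k+1}(y)}\cdot r_{k+1}(y)\cdot y^m = y^m$ after summing over the $r_{k+1}(y)$ equal copies; so on those points $D_k x^m$ agrees with $x^m$ and there is nothing to check. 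The substantive case is $y$ with $r_{k+1}(y)=1$, where
\[
(D_k x^m)(y) = -\frac{P_{k+1}(y)}{P_{k+1}''(y)}\sum_{x \in \mathcal{X}_k} \frac{x^m}{(x-y)^2}.
\]
Wait — $P_{k+1}(y)=0$ here, so this must be interpreted as a limit; the clean way is to instead use that for $y$ a simple root, $\mathbb{P}_k(x\to y) = \frac{(x-y)^{-2}}{\sum_{x'\in\mathcal{X}_k}(x'-y)^{-2}}$ after invoking $\sum_{x'\in\mathcal{X}_k}(x'-y)^{-2} = -P_{k+1}''(y)/P_{k+1}(y)$ in the form of \eqref{eq:weight_evaluation}, or better, work with the polynomial identity before specializing.

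Concretely, I would argue as follows. Since $P_{k+1}(X) = \prod_{x\in\mathcal{X}_k}(X-x)$ counted with multiplicity (actually $\mathcal{X}_k$ has $k$ elements and $P_{k+1}$ has degree $k+1$ — so one uses $\mathcal{X}_{k+1}$, not $\mathcal{X}_k$, in the product; I would be careful here and use the roots of $P_{k+1}$ in the denominators of $\mathbb{P}_k$, which is what the definition literally says), the quantity $\sum_{x}\frac{x^m}{(x-y)^2}$ over roots $x$ of $P_{k+1}$ can be extracted from the Laurent expansion at $y$ of $\frac{Q(X)}{P_{k+1}(X)}$ for a suitable numerator $Q$ of degree $m$, using that $y$ is a simple zero of $P_{k+1}$. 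The key mechanism is: the function $\frac{P_{k+1}'(X)}{P_{k+1}(X)} = \sum_x \frac{1}{X-x}$, and differentiating or multiplying by polynomials in $X$ and reading off residues produces $\sum_x \frac{x^m}{(x-y)^2}$ type sums. The Appell property $P_{k+1}' = (k+1) P_k$ is exactly what converts $P_{k+1}'/P_{k+1}$ into something whose contour/residue manipulation yields the clean coefficient $1 - \frac{m+1}{k+1}$; intuitively, $D_k$ acting on the leading term $x^m$ scales it by $\frac{\deg P_{k+1} - (m+1)}{\deg P_{k+1}} = 1 - \frac{m+1}{k+1}$, because averaging $x^m$ against the "derivative-like" kernel $\mathbb{P}_k$ loses one degree's worth of leading coefficient in the expected proportion. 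The cleanest route is probably a generating-function / contour-integral identity: write $(D_k x^m)(y)$ as a contour integral $\frac{1}{2\pi i}\oint \frac{z^m \,P_{k+1}(y)}{(z-y)^2 P_{k+1}(z)}\,dz$ around the roots of $P_{k+1}$ other than $y$ (this picks out $-\frac{P_{k+1}(y)}{P_{k+1}''(y)}\sum_{x\ne y}\frac{x^m}{(x-y)^2}$ plus the appropriate simple-root normalization), then deform to a large circle and to the double pole at $y$; the residue at infinity vanishes for $m \le k-1$ and contributes when $m=k$ — but we only need $m \le |\mathcal{Y}_k|-1 \le k-1$ — and the residue at the double pole at $y$, after using $P_{k+1}' = (k+1)P_k$ and $P_k(y)$'s relation to $P_{k+1}(y), P_{k+1}''(y)$, produces exactly $y^m$ times $1 - \frac{m+1}{k+1}$ plus lower-order-in-$y$ terms.

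The main obstacle — and the whole point of this subsection relative to \cite{GK} — is the bookkeeping of repeated roots: I must check that the contour-integral identity is still valid when $P_{k+1}$ has zeros of multiplicity $>1$, which means (i) confirming that those repeated roots $y$ contribute $y^m$ exactly (done above, since $\mathbb{P}_k$ degenerates to a point mass there, once one summing over the $r_{k+1}(y)$ identical copies cancels the $1/r_{k+1}(y)$), and (ii) confirming that in the contour integral for a \emph{simple} root $y$, the poles of the integrand at the \emph{other}, possibly multiple, roots of $P_{k+1}$ still sum (via residues at higher-order poles) to the correct $\sum_{x \ne y} \mathbb{P}_k(x\to y) x^m$, where the sum over $x\in\mathcal{X}_k$ at a multiple root must be interpreted through the $r_{k+1}(y')>1$ branch — but those $y'$ have $\mathbb{P}_k(x\to y') $ not feeding into $D_k x^m$ at our simple $y$, so actually the integrand has genuine higher-order poles there and one needs the residue computation to still telescope. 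I expect the resolution is that the Appell identity makes $\frac{P_{k+1}(y)}{(z-y)^2 P_{k+1}(z)}$ have residues at a multiple root $z=y'$ that precisely reconstruct the "limit of the simple case" interpretation noted after the definition of the jumping process, so the final sum is unchanged; making this rigorous is the delicate step, and I would isolate it as a short lemma about residues of $\frac{1}{(z-y)^2 P_{k+1}(z)}$ before assembling the proof. Everything else is the same degree-counting argument as in \cite{GK}: the $1-\frac{m+1}{k+1}$ coefficient comes from the behavior at $z=\infty$ and the double pole at $z=y$, and the "polynomial of degree $\le m$" error term absorbs all lower powers of $y$ generated along the way.
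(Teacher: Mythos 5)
Your proposal is in the right spirit (residue calculus plus the Appell identity, which is also how the paper proceeds), but the setup contains errors that would derail the argument. The most consequential is a misreading of the kernel: in the definition of $\mathbb{P}_k(x\to y)$ the ratio $P_{k+1}/P_{k+1}''$ is evaluated at the \emph{source} $x\in\mathcal{X}_k$, where it is finite and (for $x\in\mathcal{Y}_k$) nonzero --- there is no $0/0$ and no limit to take. By moving the evaluation to the target $y$ and then ``repairing'' the resulting $P_{k+1}(y)=0$ through a normalization over sources, you replace the identity $\sum_{y'\in\mathcal{X}_{k+1}}(x-y')^{-2}=-P_{k+1}''(x)/P_{k+1}(x)$ of \eqref{eq:weight_evaluation} (which is what makes $\sum_{y}\mathbb{P}_k(x\to y)=1$) with the different sum $\sum_{x'\in\mathcal{X}_k}(x'-y)^{-2}$, thereby changing the operator: your kernel is normalized over sources, the actual one over targets, and the two disagree (yours would force $D_k1\equiv 1$, whereas already for the Hermite Appell sequence $D_11\equiv\tfrac12$). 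This is also why your contour ends up encircling the roots of $P_{k+1}$ with integrand $z^mP_{k+1}(y)/\bigl((z-y)^2P_{k+1}(z)\bigr)$, which leads nowhere (the prefactor $P_{k+1}(y)$ vanishes and the residue at infinity is zero). The correct contour encircles $\mathcal{Y}_k$, i.e.\ the roots of $P_k$, with integrand $\frac{f(z)}{(z-y)^2}\cdot\frac{P_{k+1}(z)}{P_{k+1}''(z)}\cdot\frac{P_k'(z)}{P_k(z)}$, which the Appell identity $P_{k+1}'=(k+1)P_k$ collapses to $\frac{f(z)}{(z-y)^2}\cdot\frac{P_{k+1}(z)}{P_{k+1}'(z)}$; in that setup the residue at infinity does \emph{not} vanish for $m\le k-1$ --- it equals $\frac{m+1}{k+1}y^m$ plus lower order and is the entire source of the correction term, contrary to your claim.

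The second gap is the repeated-root bookkeeping, which is the whole point of this proof relative to \cite{GK}, and it is off by one. A root $y$ of $P_{k+1}$ of multiplicity $\rho=r_{k+1}(y)>1$ is a root of $P_k=P_{k+1}'/(k+1)$ of multiplicity $\rho-1$, so it occurs $\rho-1$ times in $\mathcal{X}_k$, not $\rho$ times; hence the direct contribution is $\bigl(1-\tfrac{1}{r_{k+1}(y)}\bigr)f(y)$, not $f(y)$, and there is very much something left to check. The mechanism you are missing is that pushing the contour around $\mathcal{Y}_k$ across the pole at $z=y$ picks up a residue equal to $\frac{f(y)}{r_{k+1}(y)}$ for \emph{every} multiplicity (because $P_{k+1}/P_{k+1}'$ has a simple zero of slope $1/\rho$ at a root of multiplicity $\rho$), and this exactly restores the deficit, yielding the single uniform formula $(D_kf)(y)=f(y)-\frac{1}{2\pi i}\int_{\infty}\frac{f(z)}{(z-y)^2}\frac{P_{k+1}(z)}{P_{k+1}'(z)}\,dz$ at all $y\in\mathcal{X}_{k+1}$. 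From there the Taylor expansion of $(z-y)^{-2}$ at infinity together with $f(z)P_{k+1}(z)/P_{k+1}'(z)\sim z^{m+1}/(k+1)$ gives the stated coefficient. Without this cancellation your value $y^m$ at repeated roots would not even be consistent with $D_kx^m$ agreeing with a single polynomial of leading coefficient $1-\frac{m+1}{k+1}$ across all of $\mathcal{X}_{k+1}$.
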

Unfortunately, current technologies are unable to easily analyze the lower degree terms for a general Appell sequence. However, for a subclass of sequences satisfying a differential equation, the operator $D_k$ is diagonalizable in terms of explicit orthogonal polynomials.
\begin{definition} (Inner Product $\langle \cdot , \cdot \rangle_k$) For each $k \ge 1$, define an inner product on $\mathcal{F}_k$ corresponding to the jumping process on $P_0(x), P_1(x),\ldots$ by
\[\langle f, g \rangle_k = \sum_{x \in \mathcal{Y}_k} f(x)g(x)w_k(x),\]
where the weight is
\[w_k(x) = -\frac{P_{k+1}(x)}{P_{k+1}''(x)}.\]
Note $w_k(x)$ has a natural connection to the normalization constant of the jumping process and is easily evaluated in the Laguerre case by \eqref{eq:weight_evaluation}.
\end{definition}
\begin{definition} (Orthogonal Polynomials $Q_m^{(k)}$, $\tilde{Q}_m^{(k)}$) 
For each $k \ge 1$ and $0 \le m \le |\mathcal{Y}_k| - 1$, let $Q_m^{(k)}(x)$ be the monic orthogonal polynomial of degree $m$ with respect to $\langle \cdot, \cdot \rangle_k$. Furthermore, define the \textit{normalized} version of $Q_m^{(k)}(x)$ by
\[
\tilde{Q}_m^{(k)}(x) = \frac{Q_m^{(k)}(x)}{\sqrt{\langle Q_m^{(k)}(x), Q_m^{(k)}(x) \rangle_k}}.
\]
\end{definition}
\begin{proposition}{\cite{GK}} \label{prop:diagonalization_Q} Suppose for all $k \ge 0$, the polynomial $P_k(x)$ satisfies the recurrence
\[
P_k''(x)\alpha_k(x) + P_k'(x)\beta_k(x) + P_k(x) = 0,
\]
 where $\alpha_k(x)$ is polynomial of degree $\le 2$ and $\beta_k(x)$ is a polynomial of degree $\le 1$. Then one has
\[
D_kQ_m^{(k)}(x) = \left(1 - \frac{m+1}{k+1}\right)Q_m^{(k+1)}(x).
\]
\end{proposition}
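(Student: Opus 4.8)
The plan is to follow the template of \cite{GK}: Proposition \ref{prop:op_poly_preservation} already controls the top-degree behaviour of $D_k$, so the only thing that remains is to show that $D_k$ sends $\langle\cdot,\cdot\rangle_k$-orthogonality to $\langle\cdot,\cdot\rangle_{k+1}$-orthogonality. Fix $k\ge 1$ and $0\le m\le|\mathcal{Y}_k|-1$. By Proposition \ref{prop:op_poly_preservation} the function $D_kQ_m^{(k)}$ is represented by a polynomial of degree exactly $m$ with leading coefficient $1-\frac{m+1}{k+1}$, which is nonzero since $m\le|\mathcal{Y}_k|-1\le k-1$. Hence it suffices to prove that $D_kQ_m^{(k)}$ is $\langle\cdot,\cdot\rangle_{k+1}$-orthogonal to every polynomial of degree $<m$: writing $D_kQ_m^{(k)}=\sum_{i=0}^{m}a_iQ_i^{(k+1)}$, this orthogonality forces $a_i=0$ for $i<m$, and matching leading coefficients gives $a_m=1-\frac{m+1}{k+1}$, which is the claim.

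To obtain that orthogonality I would construct an adjoint of $D_k$ for the pair of inner products. For polynomials $f$ and $g$ of degree below $|\mathcal{Y}_k|$ and $|\mathcal{Y}_{k+1}|$ respectively, insert the definition of $D_k$ into $\langle D_kf,g\rangle_{k+1}$ and interchange the two summations; using $\mathbb{P}_k(x\to y)=(x-y)^{-2}w_k(x)$ when $r_{k+1}(y)=1$, and observing that the terms indexed by $x\in\mathcal{X}_k\setminus\mathcal{Y}_k$ drop out (there $w_k(x)$ is a $0/0$ whose limiting value is $0$, since $P_{k+1}$ vanishes to strictly higher order than $P_{k+1}''$ at a multiple root), one gets $\langle D_kf,g\rangle_{k+1}=\langle f,E_kg\rangle_k$ with $(E_kg)(x)=\sum_{y\in\mathcal{Y}_{k+1}}\frac{w_{k+1}(y)g(y)}{(x-y)^2}$. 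This is exactly where the hypothesised differential equation is used: for any root $x$ of $P_j$ the Appell relation $P_{j+1}'=(j+1)P_j$ gives $P_{j+1}'(x)=0$, so $\alpha_{j+1}P_{j+1}''+\beta_{j+1}P_{j+1}'+P_{j+1}=0$ collapses to $P_{j+1}(x)=-\alpha_{j+1}(x)P_{j+1}''(x)$, i.e. $w_j(x)=\alpha_{j+1}(x)$; differentiating the same equation and comparing with the equation for $P_{j-1}$ shows in addition that all the $\alpha_j$ are constant multiples of a single polynomial, so $\alpha_{k+2}/\alpha_{k+1}=c_k$ is a constant.

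The remaining, and main, step is that $E_k$ does not raise degree: $E_kg$ agrees on $\mathcal{Y}_k$ with a polynomial of degree $\le\deg g$. I would get this from the rational function $R(z)=\alpha_{k+2}(z)g(z)P_k(z)/P_{k+1}(z)$. At a simple root $y$ of $P_{k+1}$ the Appell relation makes the residue of $R$ equal to $\alpha_{k+2}(y)g(y)P_k(y)/P_{k+1}'(y)=\alpha_{k+2}(y)g(y)/(k+1)$ — this cancellation of $P_k(y)$ against $P_{k+1}'(y)$ is the crux — so $R(z)=r(z)+\frac{1}{k+1}\sum_{y\in\mathcal{Y}_{k+1}}\frac{\alpha_{k+2}(y)g(y)}{z-y}$ for a polynomial $r$ with $\deg r\le\deg g+1$. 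Differentiating and evaluating at a root $x$ of $P_k$, where $P_k(x)=0$ reduces $R'(x)$ to $\alpha_{k+2}(x)g(x)P_k'(x)/P_{k+1}(x)$ and where the differential equation combined with $P_{k+1}''=(k+1)P_k'$ gives $P_{k+1}(x)=-(k+1)\alpha_{k+1}(x)P_k'(x)$, one finds $(E_kg)(x)=\sum_{y\in\mathcal{Y}_{k+1}}\frac{\alpha_{k+2}(y)g(y)}{(x-y)^2}=(k+1)(r'(x)-R'(x))=(k+1)r'(x)+\frac{\alpha_{k+2}(x)}{\alpha_{k+1}(x)}g(x)=(k+1)r'(x)+c_kg(x)$, which has degree $\le\deg g$ on $\mathcal{Y}_k$. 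Consequently, for $j<m$, $\langle D_kQ_m^{(k)},Q_j^{(k+1)}\rangle_{k+1}=\langle Q_m^{(k)},E_kQ_j^{(k+1)}\rangle_k=0$, because $E_kQ_j^{(k+1)}$ has degree $\le j<m$ while $Q_m^{(k)}$ is $\langle\cdot,\cdot\rangle_k$-orthogonal to all polynomials of lower degree. This proves the statement when every $P_k$ has simple roots, which is the setting of \cite{GK}.

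The part I expect to be the genuine obstacle is extending everything to Appell sequences whose polynomials have repeated roots. When $P_{k+1}$ has a root of multiplicity $r>1$ one has to use the $\mathbf{1}(x=y)/r$ branch of the jumping kernel, make sense of $w_k$ by a limiting procedure, and — most delicately — understand the pole of $R(z)$ at that root. One must show either that such a root lies outside $\mathcal{Y}_k\cup\mathcal{Y}_{k+1}$ and is in fact a removable singularity of $R$, because of the extra vanishing of $\alpha_{k+2}$ and of $P_k$ there (this is what occurs at $0$ for the Laguerre sequence, where $\mathrm{ord}_0\alpha_{k+2}+\mathrm{ord}_0P_k=\mathrm{ord}_0P_{k+1}$), or that any additional residues it produces are absorbed into the degree-$\le\deg g$ correction on $\mathcal{Y}_k$, so that the final identity $(E_kg)(x)=c_kg(x)+(\text{degree}\le\deg g)$ is unchanged. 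Carrying out this residue bookkeeping carefully is the only substantive work beyond the simple-root computation, and it is exactly what Section 3.3 is set up to handle.
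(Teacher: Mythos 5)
Your argument is correct in substance, and it is a genuinely different packaging of what the paper does. The paper reduces the claim (after invoking Proposition \ref{prop:op_poly_preservation} for the leading term, exactly as you do) to the vanishing of a double contour integral around $\mathcal{Y}_{k+1}$ and $\mathcal{Y}_k$, which it imports wholesale from \cite{GK} as Lemma \ref{lem:diagonalization_Q_helper}. Your adjoint $E_k$ together with the partial-fraction analysis of $R(z)=\alpha_{k+2}(z)g(z)P_k(z)/P_{k+1}(z)$ is precisely the residue-sum form of that integral: the residue of $P_{k+1}(z)/P_k(z)$ at a simple root $x$ of $P_k$ is $-(k+1)w_k(x)$, so expanding the contour integral in residues recovers $\langle D_kQ_m^{(k)},u^j\rangle_{k+1}$ up to a nonzero constant. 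In effect you have reproved the imported lemma rather than cited it. The individual identities you rely on all check out: $w_j(x)=\alpha_{j+1}(x)$ on the roots of $P_j$ (from the ODE plus $P_{j+1}'=(j+1)P_j$), the residue cancellation $P_k(y)/P_{k+1}'(y)=1/(k+1)$, and $P_{k+1}(x)=-(k+1)\alpha_{k+1}(x)P_k'(x)$ at roots $x$ of $P_k$, giving $(E_kg)(x)=(k+1)r'(x)+c_kg(x)$ with $\deg r\le\deg g+1$; that is exactly the degree preservation needed, and the orthogonality then follows as you say. This is arguably more self-contained than the paper's proof, at the cost of needing the (true for the classical families, but not carefully argued) proportionality of the $\alpha_j$.

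The one place you stop short is the repeated-root case, and that is precisely the content this proposition adds beyond \cite{GK} --- it is the reason the paper reproves the result at all. The paper's proof disposes of it with two observations: $\mathbb{P}_k(x\to y)=0$ whenever $x\in\mathcal{X}_k\setminus\mathcal{Y}_k$ and $y\in\mathcal{Y}_{k+1}$, so the inner sum restricts to $\mathcal{Y}_k$; and the contours in Lemma \ref{lem:diagonalization_Q_helper} encircle only $\mathcal{Y}_k$ and $\mathcal{Y}_{k+1}$, so the repeated root never contributes a residue. Your first sketched alternative is the correct resolution and matches this: for the Laguerre sequence $\alpha_{k+2}(z)=z/(k+2)$, $\mathrm{ord}_0P_k=k-N$ and $\mathrm{ord}_0P_{k+1}=k+1-N$, so $R$ is regular at $0$ and contributes no extra residue, while $0\notin\mathcal{Y}_k\cup\mathcal{Y}_{k+1}$ means every evaluation point in your computation is a simple root where the identities hold. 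Writing out those two sentences would close the argument; as written, the proposal establishes the statement only under the simple-root hypothesis of \cite{GK}.
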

As long as $P_k(x)$ satisfies the above recurrence, then the orthogonal polynomials $Q_m^{(k)}(x)$ can now be computed via the Gram-Schmidt procedure applied to the monomials $x^m$. In Section \ref{sec:Q_m_properties}, we will see that in the Laguerre case these polynomials can be evaluated even more simply via the theory of dual polynomials.

\subsection{Proofs in the Case of Repeated Roots}
This section will be devoted to giving proofs of Propositions \ref{prop:op_poly_preservation} and \ref{prop:diagonalization_Q}. Note that when $P_k(x) = (-1)^k k! L^{(N-k)}_k(x)$, once $k > N + 1$, $P_{k}(x)$ has multiple roots equal to 0. Thus these extensions of the corresponding results in \cite{GK} are necessary for any Laguerre $\infty$-corners process with number of rows $n > N + 1$.
\begin{proof}[Proof of Proposition \ref{prop:op_poly_preservation}] Consider first the contribution to $(D_kf)(y)$ from $x \in \mathcal{Y}_k$:
\begin{align} \label{eq:sum_to_contour}
\sum_{x \in \mathcal{Y}_k} \mathbb{P}_k(x\to y) f(x) &= -\frac{1}{2\pi i}\int_{\mathcal{Y}_k}\frac{f(z)}{(z-y)^2} \frac{P_{k+1}(z)}{P_{k+1}''(z)} \frac{P_k'(z)}{P_k(z)}dz, \nonumber\\
&= -\frac{1}{2\pi i}\int_{\mathcal{Y}_k}\frac{f(z)}{(z-y)^2} \frac{P_{k+1}(z)}{P_{k+1}'(z)}dz,
\end{align}
where the contour is a union of positively oriented curves around the elements of $\mathcal{Y}_k$ that do not contain the other poles of the integrand at $z = y,\infty$. The pole at $z = y$ is simple with residue $\frac{f(y)}{r_{k+1}(y)}$. As the contribution to $(D_kf)(y)$ from $x \not\in \mathcal{Y}_k$ is precisely $f(y)\left(1 - \frac{1}{r_{k+1}(y)}\right)$, in all cases we can move the contour over the pole at $z = y$ to get
\[
(D_kf)(y) = f(y) - \frac{1}{2\pi i} \int_{\infty} \frac{f(z)}{(z-y)^2} \frac{P_{k+1}(z)}{P_{k+1}'(z)}dz.
\]
From here we can proceed as in the proof of \cite[prop. 4.3]{GK}. Taylor expand to get
\[\frac{1}{(z-y)^2} = \frac{1}{z^2} + 2\frac{y}{z^3} + 3\frac{y^2}{z^4} + \cdots\]
When $f(y) = y^m$, the expression $f(z)\frac{P_{k+1}(z)}{P_{k+1}'(z)}$ will have leading order $\frac{z^{m+1}}{k+1}$. Thus only the terms from the Taylor expansion of order $z^{-i-1}$ for $i \le m$ will contribute to the pole at $\infty$. Comparing the residues from each term in the expansion then gives precisely the desired result.
\end{proof}

To avoid having to re-do too much of the work from \cite{GK} in the following proof, we will use the following lemma. While not stated explicitly, the lemma is obtained during the proof of \cite[thm. 4.9]{GK}.\footnote{In \cite{GK}'s case, $\mathcal{X}_k = \mathcal{Y}_k$ so the statement of the result there has the contours around $\mathcal{X}_k$. However, the argument still works in the general case with this replacement.} 
\begin{lemma} \label{lem:diagonalization_Q_helper}
Under the conditions of proposition \ref{prop:diagonalization_Q},
\[\oint_{\mathcal{Y}_{k+1}} \oint_{\mathcal{Y}_{k}} \frac{Q_m^{(k)}(z)}{(z-u)^2} \frac{P_{k+1}(z)}{P_k(z)}u^j \frac{P_{k+2}(u)}{P_{k+1}(u)}dzdu = 0\]
for all $0 \le j \le m-1$. The contours are unions of positively oriented curves around the points of $\mathcal{Y}_{k+1}, \mathcal{Y}_{k}$ sufficiently small such that each curve only contains a single pole of its respective integrand.
\end{lemma}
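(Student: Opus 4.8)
The plan is to prove Lemma \ref{lem:diagonalization_Q_helper} by reducing the double contour integral to a statement about the operator $D_k$ and the inner product $\langle\cdot,\cdot\rangle_k$, and then invoking the orthogonality of $Q_m^{(k)}$. First I would recognize the inner $u$-integral: the integrand $\frac{P_{k+1}(z)}{(z-u)^2}\frac{P_{k+2}(u)}{P_{k+1}(u)}$ is, up to the factor $\frac{P_k'(u)}{P_k(u)}$ that I will reinsert using the Appell relation $P_k' = kP_{k-1}$ (more precisely using that the residue extraction around $\mathcal{Y}_{k}$ only picks up simple roots of $P_k$), exactly the kernel that appears in \eqref{eq:sum_to_contour} with $k$ shifted to $k+1$. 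That is, after the $z$-contour is shrunk onto $\mathcal{Y}_k$, the expression computes $(D_k Q_m^{(k)})(u)$ evaluated as a residue, weighted against $u^j$ and the weight $w_{k+1}(u) = -\frac{P_{k+2}(u)}{P_{k+2}''(u)}$ — here one uses $\frac{P_{k+2}(u)}{P_{k+1}(u)}$ versus $\frac{P_{k+2}(u)}{P_{k+2}''(u)}$ being interchangeable on $\mathcal{Y}_{k+1}$ up to a constant because the residues are taken at roots of $P_{k+1}$, which by the Appell property $P_{k+2}'' = (k+2)(k+1)P_k$ relate the two. So the whole double integral is, up to a nonzero constant, $\langle D_k Q_m^{(k)}, x^j\rangle_{k+1}$.

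Second, by Proposition \ref{prop:diagonalization_Q} we have $D_k Q_m^{(k)}(x) = \left(1-\frac{m+1}{k+1}\right)Q_m^{(k+1)}(x)$, a scalar multiple of the monic orthogonal polynomial $Q_m^{(k+1)}$ with respect to $\langle\cdot,\cdot\rangle_{k+1}$. Since $x^j$ for $0\le j\le m-1$ lies in the span of $Q_0^{(k+1)},\dots,Q_{m-1}^{(k+1)}$, orthogonality gives $\langle Q_m^{(k+1)}, x^j\rangle_{k+1} = 0$, and the lemma follows. The only subtlety is that $m$ must be at most $|\mathcal{Y}_{k+1}| - 1$ for $Q_m^{(k+1)}$ to be defined; this is guaranteed because $|\mathcal{Y}_{k+1}| = |\mathcal{Y}_k|$ whenever the root $0$ is already present at level $k$ (and one checks the borderline case $|\mathcal{Y}_{k+1}| = |\mathcal{Y}_k| - 1$ doesn't arise in the range of $m$ we care about, or handle it directly).

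The main obstacle — and the reason this deserves its own lemma rather than a one-line citation — is the careful bookkeeping of which poles each contour encircles and how the repeated root at $0$ interacts with the residue calculus. In \cite{GK} one has $\mathcal{X}_k = \mathcal{Y}_k$ with all roots simple, so $\frac{P_k'(z)}{P_k(z)}$ contributes a clean simple pole at every node; here $P_k$ and $P_{k+1}$ and $P_{k+2}$ may share a high-multiplicity root at $0$, so I must verify that (i) the contour around $\mathcal{Y}_k$ genuinely excludes $0$ when $0\notin\mathcal{Y}_k$, i.e. when $r_{k+1}(0)>0$, and that the factor $\frac{P_{k+1}(z)}{P_k(z)}$ is holomorphic and nonvanishing there; (ii) the factor $\frac{P_{k+2}(u)}{P_{k+1}(u)}$ likewise has no pole on $\mathcal{Y}_{k+1}$; and (iii) when passing from $\frac{P_{k+1}(z)}{P_{k+1}'(z)}$-type kernels to $\frac{P_{k+1}(z)}{P_{k+1}''(z)}$-type weights, the Appell identities $P_j' = jP_{j-1}$ convert ratios of consecutive polynomials into the stated $\alpha_k$, $\beta_k$ data without introducing spurious zeros or poles. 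Once these holomorphy checks are in place, the argument is the same residue-pushing computation as in the proof of \cite[thm.~4.9]{GK}, and I would present it as "proceed verbatim as in \cite{GK}, with $\mathcal{X}_k$ replaced by $\mathcal{Y}_k$ and noting the extra pole at $z=0$ contributes nothing since the integrand is regular there."
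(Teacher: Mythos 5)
Your identification of the double contour integral with (a nonzero constant times) $\langle D_k Q_m^{(k)}, x^j\rangle_{k+1}$ is correct: the residue of $\frac{P_{k+2}(u)}{P_{k+1}(u)}$ at a simple root of $P_{k+1}$ equals $-(k+2)w_{k+1}(u)$ by the Appell identity $P_{k+2}''=(k+2)P_{k+1}'$, and the $z$-integral reproduces the transition kernel exactly as in \eqref{eq:sum_to_contour}. But this identification is the \emph{easy} half, and it is precisely what the paper does in the opposite direction: the paper's proof of Proposition \ref{prop:diagonalization_Q} consists of reducing $\langle D_k Q_m^{(k)}, x^j\rangle_{k+1}=0$ to the vanishing of this double integral, i.e.\ to the present lemma. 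Your step two then invokes Proposition \ref{prop:diagonalization_Q} (that $D_kQ_m^{(k)}$ is a scalar multiple of $Q_m^{(k+1)}$, hence orthogonal to $x^j$) to conclude the integral vanishes. Within the paper's logical structure this is circular: Proposition \ref{prop:op_poly_preservation} only controls the leading term of $D_kx^m$, not the lower-order ones, so the orthogonality $\langle D_kQ_m^{(k)},x^j\rangle_{k+1}=0$ has no source other than this lemma. You cannot use the diagonalization to prove the statement that the diagonalization is built on.

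The genuine content of the lemma is the direct residue-pushing computation in the proof of \cite[thm.~4.9]{GK}, which is where the hypothesized second-order recurrence $P_k''\alpha_k+P_k'\beta_k+P_k=0$ actually enters; the paper itself does not reprove it but cites \cite{GK} and records (in a footnote) that the argument survives the replacement of $\mathcal{X}_k$ by $\mathcal{Y}_k$. Your closing remarks and your holomorphy checks (i)--(iii) about the repeated root at $0$ are the right preliminary bookkeeping for that extension, and ``proceed as in \cite{GK} with $\mathcal{X}_k$ replaced by $\mathcal{Y}_k$'' is an acceptable way to finish --- but that computation, not the orthogonality argument, must be the body of the proof. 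As written, the substantive part of your proposal proves nothing new, and the part that would prove the lemma is deferred entirely to a citation you describe only in passing.
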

\begin{proof}[Proof of Proposition \ref{prop:diagonalization_Q}] 
Following off of Proposition \ref{prop:op_poly_preservation}, it suffices to show
\[\langle D_k Q_m^{(k)}, x^j \rangle_{k+1} = 0\]
for $0 \le j \le m-1$. Note
\[\langle D_k Q_m^{(k)}, x^j\rangle_{k+1} = \sum_{y \in \mathcal{Y}_{k+1}} \sum_{x \in \mathcal{X}_{k}} \mathbb{P}_k(x \to y) Q_m^{(k)}(y)y^jw_k(j).\]
As $\mathbb{P}_k(x \to y) = 0$ if $y \in \mathcal{Y}_{k+1}, x \not\in \mathcal{Y}_{k}$, the inner sum may be replaced with a summation over $\mathcal{Y}_k$ instead. From here, expanding the sums out as contour integrals as in \eqref{eq:sum_to_contour} gets us the expression from Lemma \ref{lem:diagonalization_Q_helper} thus completing the proof.
\end{proof}

\subsection{Orthogonality via Dual Polynomials} 
\label{sec:Q_m_properties}
Suppose $\mathcal{P}_0(x),\mathcal{P}_1(x),\ldots$ is a sequence of polynomials with distinct roots satisfying the three-term recurrence
\begin{equation} \label{eq:three_term_recurrence}
x\mathcal{P}_i(x) = \mathcal{P}_{i+1}(x) + b_i\mathcal{P}_i(x) + u_i\mathcal{P}_{i-1}(x)
\end{equation}
for all $i \ge 1$ some constants $b_i,u_i$. Any family of monic polynomials orthogonal with respect to a positive measure satisfies such a recurrence (see \cite{Chi} for a primer). Conversely, for any fixed positive integer $M$, the sequence of polynomials $\mathcal{P}_0(x),\ldots, \mathcal{P}_{M-1}(x)$ are orthogonal with respect to the discrete measure
\[
\sum_{i=1}^M w(x_i)\delta_{x_i},
\]
where $x_1,\ldots, x_M$ are the roots of $\mathcal{P}_M(x)$ and
\[
w(x) = \frac{1}{\mathcal{P}_{M-1}(x)\mathcal{P}'_M(x)}.
\]
\par 
For the orthogonal polynomials of Section \ref{sec:general_jumping_processes} corresponding to the Hermite, Jacobi, and Laguerre polynomials, \cite{GK, AHV} both noted that the dual polynomials of de Boor and Saaf \cite{BS} hold the key to evaluating them. Like both papers, we will also use the exposition and results found in \cite{VZ} as a guide.
\begin{definition} (Dual Polynomials) Let $M$ be some fixed positive integer. The \textit{dual polynomials} $\mathcal{Q}_0(x),\ldots, \mathcal{Q}_{M-1}(x)$ associated with the sequence $\mathcal{P}_0(x),\mathcal{P}_1(x),\ldots$ are the polynomials satisfying $\mathcal{Q}_0(x) = 1, \mathcal{Q}_1(x) = x - b_{M-1}$, and the recurrence
\[x\mathcal{Q}_i(x) = \mathcal{Q}_{i+1}(x) + b_{M-i-1}\mathcal{Q}_i(x) + u_{M-i}\mathcal{Q}_{i-1}(x)\]
for all $1 \le i \le M-2$.
\end{definition}
\begin{proposition} \cite{VZ} \label{prop:Q_recurrence} Let $M$ be a fixed positive integer. The finite sequence of dual polynomials $\mathcal{Q}_0(x),\ldots \mathcal{Q}_{M-1}(x)$ associated to $\mathcal{P}_0(x),\mathcal{P}_1(x),\ldots$ are orthogonal with respect to the discrete measure
\[
\sum_{i=1}^M w^*(x_i)\delta_{x_i},
\]
where
\[
w^*(x) = \frac{\mathcal{P}_{M-1}(x)}{\mathcal{P}_M(x)}
\]
and $x_1,\ldots, x_M$ are the roots of $\mathcal{P}_M(x)$. 
\end{proposition}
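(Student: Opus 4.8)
The plan is to deduce the claim from the orthogonality of the $\mathcal{P}_i$ recorded immediately above (which I take as given), by writing each dual polynomial, evaluated at the nodes $x_1,\dots,x_M$, explicitly in terms of the $\mathcal{P}_i$ at those same nodes. First I would fix conventions: since $\deg\mathcal{P}_{M-1}<\deg\mathcal{P}_M$ and $\mathcal{P}_M$ has $M$ distinct roots, the rational function $w^*(x)=\mathcal{P}_{M-1}(x)/\mathcal{P}_M(x)$ is proper with only simple poles, and ``$\sum_{i=1}^M w^*(x_i)\delta_{x_i}$'' is to be read as the measure whose mass at $x_i$ is the residue $\mathcal{P}_{M-1}(x_i)/\mathcal{P}_M'(x_i)$ (equivalently, the coefficients in the partial-fraction expansion $\mathcal{P}_{M-1}(x)/\mathcal{P}_M(x)=\sum_i w^*(x_i)/(x-x_i)$). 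Note $\mathcal{P}_{M-1}$ and $\mathcal{P}_M$ share no root — otherwise the three-term recurrence \eqref{eq:three_term_recurrence} would force $\mathcal{P}_j(a)=0$ for all $j$, contradicting $\mathcal{P}_0\equiv1$ — so $w^*(x_i)$ is well defined and $\mathcal{P}_{M-1}(x_i)\neq0$ at each root $x_i$ of $\mathcal{P}_M$. The key reformulation of the link between $w^*$ and the weight $w(x)=1/(\mathcal{P}_{M-1}(x)\mathcal{P}_M'(x))$ from the preceding display is $w^*(x_i)=\mathcal{P}_{M-1}(x_i)^2\,w(x_i)$.

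The heart of the argument is the identity
\[
\mathcal{P}_{M-1}(x_i)\,\mathcal{Q}_j(x_i)=\Big(\prod_{k=1}^{j}u_{M-k}\Big)\,\mathcal{P}_{M-1-j}(x_i),\qquad 0\le j\le M-1,
\]
valid at every root $x_i$ of $\mathcal{P}_M$. To establish it I would fix such an $x_i$, abbreviate $p_m=\mathcal{P}_m(x_i)$ and $q_m=\mathcal{Q}_m(x_i)$, put $c_j=\prod_{k=1}^j u_{M-k}$ (so $c_0=1$ and $c_j=u_{M-j}c_{j-1}$, $c_{j+1}=u_{M-1-j}c_j$), and check that $r_j:=c_j\,p_{M-1-j}$ obeys the same recurrence as $q_j$, namely $r_{j+1}=(x_i-b_{M-1-j})r_j-u_{M-j}r_{j-1}$; this falls out on substituting the $\mathcal{P}$-recurrence at index $M-1-j$ and using the two relations among the $c$'s. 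The one place the hypothesis ``$x_i$ is a root of $\mathcal{P}_M$'' enters is the boundary: because $p_M=0$, the $\mathcal{P}$-recurrence at index $M-1$ gives $(x_i-b_{M-1})p_{M-1}=u_{M-1}p_{M-2}$, which says precisely $r_1=(x_i-b_{M-1})p_{M-1}=p_{M-1}\,q_1$, while trivially $r_0=p_{M-1}=p_{M-1}\,q_0$. Since $(r_j)$ and $(p_{M-1}q_j)$ satisfy the same linear second-order recurrence and agree at $j=0,1$, they agree for $0\le j\le M-1$, which is the identity. Conceptually this reflects that the tridiagonal array of the $\mathcal{Q}$-recurrence is the reversal-plus-transpose of that of the $\mathcal{P}$-recurrence, so the two have the same spectrum $\{x_1,\dots,x_M\}$ and ``reversed'' eigenvectors; but I would run the bookkeeping above rather than invoke matrices.

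Granting the identity, the proposition is a one-line computation: for $0\le j\neq j'\le M-1$,
\[
\sum_{i=1}^{M} w^*(x_i)\,\mathcal{Q}_j(x_i)\,\mathcal{Q}_{j'}(x_i)
= c_jc_{j'}\sum_{i=1}^{M}\frac{w^*(x_i)}{\mathcal{P}_{M-1}(x_i)^2}\,\mathcal{P}_{M-1-j}(x_i)\,\mathcal{P}_{M-1-j'}(x_i)
= c_jc_{j'}\sum_{i=1}^{M} w(x_i)\,\mathcal{P}_{M-1-j}(x_i)\,\mathcal{P}_{M-1-j'}(x_i),
\]
using $w^*(x_i)/\mathcal{P}_{M-1}(x_i)^2=w(x_i)$. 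Since $j\neq j'$ forces $M-1-j\neq M-1-j'$ with both indices in $\{0,\dots,M-1\}$, the final sum vanishes by the orthogonality of $\mathcal{P}_0,\dots,\mathcal{P}_{M-1}$ with respect to $\sum_i w(x_i)\delta_{x_i}$, so $\langle\mathcal{Q}_j,\mathcal{Q}_{j'}\rangle=0$. As each $\mathcal{Q}_j$ is monic of degree $j$ by its recurrence and the measure is supported on $M\ge j+1$ points, the $\mathcal{Q}_j$ are then the orthogonal polynomials of this measure; positivity of the weights $w^*(x_i)=\mathcal{P}_{M-1}(x_i)^2w(x_i)$, if wanted, is inherited from that of the Christoffel numbers $w(x_i)$ for the $\mathcal{P}_i$. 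The genuinely delicate step is the displayed identity of the middle paragraph — getting the index shifts right and recognizing that $\mathcal{P}_M(x_i)=0$ is exactly what makes the two initial conditions match; everything else is routine.
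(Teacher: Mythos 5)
Your proof is correct and complete: the key identity $\mathcal{P}_{M-1}(x_i)\mathcal{Q}_j(x_i)=\bigl(\prod_{k=1}^{j}u_{M-k}\bigr)\mathcal{P}_{M-1-j}(x_i)$ checks out (the base cases use $\mathcal{P}_M(x_i)=0$ exactly as you say, and the two sequences satisfy the same second-order recurrence), and the weight relation $w^*(x_i)=\mathcal{P}_{M-1}(x_i)^2w(x_i)$ then reduces the claim to the orthogonality of the $\mathcal{P}_j$ stated just before the proposition. The paper gives no proof of this statement, citing \cite{VZ} instead, so there is nothing internal to compare against; your argument is the standard one from the dual-polynomial literature and can stand on its own.
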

From now on we will assume that $Q_m^{(k)}(x)$ are the orthogonal polynomials corresponding to Laguerre polynomials via the relation $P_k(x) = (-1)^k k! L^{(N-k)}_k(x)$. The fact that $P_k(x)$ has multiple zero roots for $k > N$ adds a complication as the polynomial $\mathcal{P}_i(x)$ must have distinct roots. This is easily resolved by the relation \eqref{eq:laguerre_neg_alpha} between positive and negative $\alpha$ Laguerre polynomials. However, we are forced to consider the cases $k > N$ and $k \le N$ separately in the following results.
\begin{corollary} 
\label{cor:Q_recurrence}
For any fixed positive integer $k$, the polynomials of the form $Q_m^{(k)}(x)$ satisfy $Q_0^{(k)}(x) = 1, Q_1^{(k)}(x) = x - (k+N-1)$, and the recurrence
\begin{equation}
\label{eq:Q_recurrence}
xQ_{m}^{(k)}(x) = Q_{m+1}^{(k)}(x) + (N + k - 2m - 1)Q_{m}^{(k)}(x) + (k-m)(N-m)Q_{m-1}^{(k)}(x)
\end{equation}
whenever $1 \le m \le N \wedge k-1$.
\end{corollary}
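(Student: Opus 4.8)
The plan is to recognize each $Q_m^{(k)}(x)$ as a de Boor--Saff dual polynomial attached to a fixed sequence of monic generalized Laguerre polynomials, and then simply read the recurrence off of Proposition~\ref{prop:Q_recurrence}. As flagged before the statement, the sign of $N-k$ forces us to treat $k\le N$ and $k>N$ separately, the point being that $P_k$ itself has a repeated root at $0$ once $k>N$ and so cannot play the role of ``$\mathcal P_M$'' in the dual-polynomial machinery; the relation \eqref{eq:laguerre_neg_alpha} lets us replace it by a genuine simple-rooted Laguerre polynomial.

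\textbf{Setup.} Recall from the jumping-process construction that $Q_0^{(k)},\dots,Q_{N\wedge k-1}^{(k)}$ are the monic orthogonal polynomials for the measure $\sum_{x\in\mathcal Y_k}w_k(x)\,\delta_x$, and that \eqref{eq:weight_evaluation} gives $w_k(x)=-P_{k+1}(x)/P_{k+1}''(x)=x/(k+1)$ for every $x\in\mathcal Y_k$. Using \eqref{eq:laguerre_neg_alpha} together with the simplicity of the roots of $L_n^{(\gamma)}$ for $\gamma\ge 0$ and the standard interlacing of Laguerre roots (in the degree and in the parameter), one checks that $\mathcal Y_k$ is exactly the set $\{l_{1,k},\dots,l_{N\wedge k,k}\}$ of positive roots of $L_{N\wedge k}^{(|N-k|)}(x)$, so $|\mathcal Y_k|=N\wedge k$. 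Put $M:=N\wedge k$, $\alpha:=|N-k|$, and write $N\vee k:=\max(N,k)$, so $2M+\alpha=N+k$ and $M+\alpha=N\vee k$. Take $\mathcal P_i(x):=\hat L_i^{(\alpha)}(x)$, the monic orthogonal polynomials for the positive weight $x^{\alpha}e^{-x}$ on $\R_+$; these have simple roots and obey \eqref{eq:three_term_recurrence} with $b_i=2i+\alpha+1$ and $u_i=i(i+\alpha)$, and by construction $\mathcal P_M=\hat L_{N\wedge k}^{(\alpha)}$ has root set exactly $\mathcal Y_k$.

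\textbf{Matching the weights.} By Proposition~\ref{prop:Q_recurrence} the dual polynomials $\mathcal Q_0,\dots,\mathcal Q_{M-1}$ of the sequence $\{\mathcal P_i\}$ are the monic orthogonal polynomials for $\sum_{x\in\mathcal Y_k}w^*(x)\delta_x$, where the value of $w^*$ at a node $l_{a,k}$ is the residue of $\mathcal P_{M-1}/\mathcal P_M$ there, namely $\mathcal P_{M-1}(l_{a,k})/\mathcal P_M'(l_{a,k})$. Using the Laguerre identities $(\hat L_n^{(\alpha)})'=n\,\hat L_{n-1}^{(\alpha+1)}$ and $xL_{n-1}^{(\alpha+1)}(x)=(n+\alpha)L_{n-1}^{(\alpha)}(x)-nL_n^{(\alpha)}(x)$, the latter evaluated at a root of $L_n^{(\alpha)}$ with $n=M$, one obtains
\[
w^*(l_{a,k})=\frac{\hat L_{M-1}^{(\alpha)}(l_{a,k})}{M\,\hat L_{M-1}^{(\alpha+1)}(l_{a,k})}=\frac{l_{a,k}}{M(M+\alpha)}=\frac{l_{a,k}}{(N\wedge k)(N\vee k)}.
\]
Hence $w^*=c\,w_k$ on $\mathcal Y_k$ with $c=(k+1)/\big((N\wedge k)(N\vee k)\big)>0$. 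Two monic orthogonal polynomial families for measures differing by a positive scalar coincide, so $Q_m^{(k)}=\mathcal Q_m$ for $0\le m\le N\wedge k-1$. This is the one genuinely nonroutine step, and it is exactly where the case split enters: for $k\le N$ one has $\alpha=N-k$ and $M+\alpha=N$, while for $k>N$ one has $\alpha=k-N$ and $M+\alpha=k$, but in both cases the computation above collapses to the uniform answer $l_{a,k}/\big((N\wedge k)(N\vee k)\big)$.

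\textbf{Reading off the recurrence.} The dual recurrence gives $Q_0^{(k)}=1$, $Q_1^{(k)}=x-b_{M-1}$, and $xQ_m^{(k)}=Q_{m+1}^{(k)}+b_{M-m-1}Q_m^{(k)}+u_{M-m}Q_{m-1}^{(k)}$ for $1\le m\le M-2$. Substituting $b_i=2i+\alpha+1$, $u_i=i(i+\alpha)$ and $2M+\alpha=N+k$, $M+\alpha=N\vee k$ yields $b_{M-1}=2M+\alpha-1=N+k-1$, then $b_{M-m-1}=2M+\alpha-2m-1=N+k-2m-1$ and $u_{M-m}=(M-m)(M-m+\alpha)=(M-m)(N\vee k-m)=(k-m)(N-m)$ (the last equality being valid in either case), which are precisely the coefficients in \eqref{eq:Q_recurrence}. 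Finally, the top index $m=N\wedge k-1$ is not covered by Proposition~\ref{prop:Q_recurrence}; there $Q_{N\wedge k}^{(k)}$ is defined by continuing the recurrence one more step, and by the standard theory of a finite orthogonal polynomial system this equals the monic node polynomial $\prod_{a=1}^{N\wedge k}(x-l_{a,k})$, so the recurrence holds there as well. The main obstacle, to reiterate, is the weight-matching step; the rest is bookkeeping with the Laguerre recurrence coefficients and the index reversal $i\mapsto M-i$.
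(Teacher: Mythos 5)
Your proposal is correct and follows essentially the same route as the paper: identify $Q_m^{(k)}$ with the de Boor–Saff dual polynomials of the monic Laguerre sequence with $M=N\wedge k$, $\alpha=|N-k|$, match the orthogonality weights at the roots (the paper gets $\mathcal P_{M-1}(x_i)/\mathcal P_M'(x_i)=x_i/(M(M+\alpha))$ from the identity $x\mathcal P_i'=i\mathcal P_i+i(i+\alpha)\mathcal P_{i-1}$, proportional to $w_k(x)=x/(k+1)$), and read off the coefficients from the dual recurrence. Your write-up is simply more explicit than the paper's about the proportionality constant, the identification of $\mathcal Y_k$ with the positive roots, and the top index $m=N\wedge k-1$, but the underlying argument is the same.
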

\begin{proof}
Define $\mathcal{P}_i(x) = (-1)^ii!L^{(\alpha)}_i(x)$ for some fixed $\alpha$ and all nonnegative integers $i$. These satisfy a three-term recurrence of the form \eqref{eq:three_term_recurrence} with $b_i = 2i+1+\alpha, u_i = i(i+\alpha)$ and an additional recurrence
\[x\mathcal{P}_i'(x) = i\mathcal{P}_i(x) + i(i+\alpha)\mathcal{P}_{i-1}(x).\]
This latter recurrence yields
\[\frac{\mathcal{P}_{M-1}(x_i)}{\mathcal{P}_M'(x_i)} = \frac{x_i}{M(M+\alpha)}\]
for all roots $x_i$ of $\mathcal{P}_M(x)$. Finally, for $k > N$ take $M = N, \alpha = k - N$ and for $k \le N$ take $M = k$ and $\alpha = N - k$. Proposition \ref{prop:Q_recurrence} implies that $\mathcal{Q}_i(x) = Q_i^{(k)}(x)$ for $0 \le i \le M - 1$ as they have the same orthogonality measure, yielding the desired result.
\end{proof}
\begin{lemma} Let $k,l$ be positive integers satisfying $k \le l$ and suppose $0 \le m \le N \wedge k - 1$. Then
\begin{equation}
\label{eq:Q_norm}
\langle Q_m^{(k)}, Q_m^{(k)} \rangle_k = \frac{(k-m)_{m+1}(N-m)_{m+1}}{k+1},
\end{equation}
where $(c)_m = c(c+1)\cdots (c+m-1)$ is the Pochhammer symbol. Furthermore,
\begin{equation} 
\label{eq:Q_norm_ratio}
\frac{\langle Q_m^{(k)}, Q_m^{(k)} \rangle_{k}}{\langle Q_m^{(l)}, Q_m^{(l)} \rangle_l} = \frac{l+1}{k+1} \prod_{j=k}^{l-1}\left(1 - \frac{m+1}{j+1}\right).
\end{equation}
\end{lemma}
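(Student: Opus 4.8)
The plan is to bootstrap \eqref{eq:Q_norm} from the three-term recurrence of Corollary \ref{cor:Q_recurrence} plus an evaluation of the single base case $\langle Q_0^{(k)},Q_0^{(k)}\rangle_k=\langle 1,1\rangle_k$, and then read off \eqref{eq:Q_norm_ratio} as a purely algebraic corollary. For \emph{Step 1}, recall the classical fact that for monic polynomials orthogonal with respect to a fixed inner product and satisfying $x p_m = p_{m+1}+b_m p_m + u_m p_{m-1}$ one has $u_m=\langle p_m,p_m\rangle/\langle p_{m-1},p_{m-1}\rangle$ (pair the recurrence at level $m-1$ against $p_m$). Applying this to $Q_m^{(k)}$ with $u_m=(k-m)(N-m)$ from \eqref{eq:Q_recurrence} and telescoping down to $m=0$ gives
\[\langle Q_m^{(k)},Q_m^{(k)}\rangle_k=\langle 1,1\rangle_k\prod_{j=1}^m (k-j)(N-j)=\langle 1,1\rangle_k\,\frac{(k-m)_{m+1}(N-m)_{m+1}}{kN},\]
valid for $0\le m\le N\wedge k-1$ since the recurrence is available at all levels $1,\dots,m$ in that range; here I used $\prod_{j=1}^m(k-j)=(k-m)_{m+1}/k$.

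For \emph{Step 2}, I claim $\langle 1,1\rangle_k=\sum_{x\in\mathcal{Y}_k}w_k(x)=\tfrac{kN}{k+1}$. The set $\mathcal{Y}_k$ consists precisely of the simple nonzero roots of $P_k(x)=(-1)^k k! L^{(N-k)}_k(x)$, which by \eqref{eq:laguerre_neg_alpha} coincide with the roots of $L^{(|N-k|)}_{N\wedge k}(x)$. At such a root $x_0$ the Appell identity $P_{k+1}'=(k+1)P_k$ forces $P_{k+1}'(x_0)=0$, so the Laguerre equation \eqref{eq:laguerre_diffeq} for $P_{k+1}$ degenerates to $x_0 P_{k+1}''(x_0)=-(k+1)P_{k+1}(x_0)$, whence $w_k(x_0)=-P_{k+1}(x_0)/P_{k+1}''(x_0)=x_0/(k+1)$. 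Summing over $x_0\in\mathcal{Y}_k$ and using Vieta's formulas, namely that the roots of $L^{(\alpha)}_m$ sum to $m(m+\alpha)$, with $m=N\wedge k$ and $\alpha=|N-k|$, yields $\langle 1,1\rangle_k=\tfrac{1}{k+1}(N\wedge k)(N\vee k)=\tfrac{kN}{k+1}$. Plugging this into Step 1 gives \eqref{eq:Q_norm}.

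For \emph{Step 3}, fix $k\le l$ and $0\le m\le N\wedge k-1$, so that \eqref{eq:Q_norm} applies at both levels (note $N\wedge k\le N\wedge l$). Dividing the two instances, the factor $(N-m)_{m+1}$ cancels and leaves $\tfrac{l+1}{k+1}\cdot\tfrac{(k-m)_{m+1}}{(l-m)_{m+1}}$; the Pochhammer ratio telescopes via $\tfrac{(j-m)_{m+1}}{(j+1-m)_{m+1}}=\tfrac{j-m}{j+1}=1-\tfrac{m+1}{j+1}$, giving $\tfrac{(k-m)_{m+1}}{(l-m)_{m+1}}=\prod_{j=k}^{l-1}\bigl(1-\tfrac{m+1}{j+1}\bigr)$ and hence \eqref{eq:Q_norm_ratio}.

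The only genuinely delicate point is the base-case evaluation in Step 2, and within it the bookkeeping for $k>N$: one must verify that $\mathcal{Y}_k$ singles out exactly the nonzero (simple) roots of $P_k$ — equivalently the roots of $L^{(k-N)}_N$ — and that $P_{k+1}$ does not vanish on them, so that $w_k(x_0)$ is well defined and the formula $w_k(x_0)=x_0/(k+1)$ is legitimate. This is precisely the repeated-root situation absent from \cite{GK}; everything else is formal manipulation of the recurrence and of Pochhammer symbols.
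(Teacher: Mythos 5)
Your proof is correct and takes essentially the same route as the paper: both extract $\langle Q_m^{(k)},Q_m^{(k)}\rangle_k=(k-m)(N-m)\langle Q_{m-1}^{(k)},Q_{m-1}^{(k)}\rangle_k$ from the three-term recurrence (pairing $xQ_m^{(k)}$ against $Q_{m-1}^{(k)}$ two ways) and then telescope, with the ratio identity a routine Pochhammer cancellation. The only difference is that you make the base case $\langle 1,1\rangle_k=kN/(k+1)$ explicit via the weight evaluation $w_k(x_0)=x_0/(k+1)$ and Vieta's formula, a step the paper leaves implicit in its ``follows by induction'' (it is already contained in equation \eqref{eq:weight_evaluation}).
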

\begin{proof}
The idea for evaluating \eqref{eq:Q_norm} is to equate two different expressions for $\langle xQ_m^{(k)}, Q_{m-1}^{(k)} \rangle$. The first such expression arises from the inner product of \eqref{eq:Q_recurrence} with $Q_{m-1}^{(k)}$; the second comes from substituting $m \to m-1$ in \eqref{eq:Q_recurrence} and then taking the inner product with $Q_m^{(k)}$. One obtains
\[\langle Q^{(k)}_{m}, Q^{(k)}_{m} \rangle_{k}  = \langle xQ^{(k)}_m, Q^{(k)}_{m-1}\rangle  =  (k-m)(N-m)\langle Q^{(k)}_{m-1}, Q^{(k)}_{m-1} \rangle_{k}.\]
From here, \eqref{eq:Q_norm} follows by induction and \eqref{eq:Q_norm_ratio} is easily obtained by expanding out terms using \eqref{eq:Q_norm}.
\end{proof}
The final relation we will establish in this section is a contour integral expression for $Q_m^{(k)}$. For this we need a recurrence involving the polynomials $Q_m^{(k)}$ that holds for an infinite sequence rather than a finite one.
\begin{definition} (Associated Polynomials) Let $c$ be a nonnegative integer and let $\mathcal{P}_0(x), \mathcal{P}_1(x),\ldots$ be a sequence of polynomials satisfying the three-term recurrence \eqref{eq:three_term_recurrence}. The associated polynomials $\mathcal{P}_0(x;c), \mathcal{P}_1(x;c),\ldots$ corresponding to $\mathcal{P}_i(x)$ are the polynomials satisfying $\mathcal{P}_0(x;c) = 0, \mathcal{P}_{1}(x;c) = x - b_c$, and the recurrence
\[x\mathcal{P}_i(x;c) = \mathcal{P}_{i+1}(x;c) + b_{i + c}\mathcal{P}_i(x;c) + u_{i+c}\mathcal{P}_{i-1}(x;c)\]
for all $i \ge 1$.
\end{definition}
\begin{lemma} \cite{VZ}
\label{prop:dual_associated_relation}
For any nonnegative integers $i < M$, the dual and associated polynomials to $\mathcal{P}_0(x),\mathcal{P}_1(x),\ldots$ satisfy the equality
\[\mathcal{Q}_i(x) = \mathcal{P}_i(x; M - i).\]
\end{lemma}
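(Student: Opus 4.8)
The plan is to argue by induction on $i$, comparing the defining three-term recurrences of the dual polynomials $\mathcal{Q}_i(x)$ and of the associated polynomials $\mathcal{P}_i(x;M-i)$ (extended by $\mathcal{P}_{-1}(x;c)=0$ and $\mathcal{P}_0(x;c)=1$, which is consistent with $\mathcal{P}_1(x;c)=x-b_c$). The obstacle to a one-line induction is that the shift parameter $M-i$ on the right-hand side \emph{changes} with $i$: passing from step $i$ to step $i+1$ simultaneously advances the polynomial index and decrements the shift, so $\mathcal{Q}_i$ and $\mathcal{P}_i(x;M-i)$ do not simply satisfy a common recurrence. The induction can only close once one has a \emph{contiguous relation} tying the associated polynomials at three consecutive shifts together, and establishing that relation is where essentially all the work lies.

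Concretely, the first and main step is to prove that for every integer $c\ge 1$ and every $i\ge 0$,
\begin{equation}\label{eq:contiguous_assoc}
\mathcal{P}_{i+1}(x;c-1) \;=\; (x-b_{c-1})\,\mathcal{P}_i(x;c) \;-\; u_c\,\mathcal{P}_{i-1}(x;c+1).
\end{equation}
I would prove \eqref{eq:contiguous_assoc} by a separate induction on $i$. The cases $i=0,1$ follow directly from the initial data together with one use of the associated-polynomial recurrence. For the inductive step one starts from $\mathcal{P}_{i+2}(x;c-1)$, applies the associated-polynomial recurrence at shift $c-1$ and index $i+1$ to rewrite it as $(x-b_{i+c})\mathcal{P}_{i+1}(x;c-1)-u_{i+c}\mathcal{P}_i(x;c-1)$, substitutes the induction hypothesis \eqref{eq:contiguous_assoc} (at indices $i$ and $i-1$) for $\mathcal{P}_{i+1}(x;c-1)$ and $\mathcal{P}_i(x;c-1)$, and then regroups the outcome according to the factors $(x-b_{c-1})$ and $u_c$. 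The coefficient of $(x-b_{c-1})$ reassembles into $\mathcal{P}_{i+1}(x;c)$ via the recurrence at shift $c$, and the coefficient of $u_c$ reassembles into $\mathcal{P}_i(x;c+1)$ via the recurrence at shift $c+1$, which is precisely \eqref{eq:contiguous_assoc} at index $i+1$. Carrying out this two-parameter bookkeeping — and, more essentially, recognizing that \eqref{eq:contiguous_assoc} is the auxiliary identity one needs — is the crux of the argument; it is the classical ``prepend one level to a continued fraction'' identity for denominator polynomials, and is the step set up in \cite{VZ}. Everything downstream is automatic.

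Given \eqref{eq:contiguous_assoc}, the second step is quick. The base cases $\mathcal{Q}_0(x)=\mathcal{P}_0(x;M)$ and $\mathcal{Q}_1(x)=x-b_{M-1}=\mathcal{P}_1(x;M-1)$ hold by definition. Assuming $\mathcal{Q}_{i-1}(x)=\mathcal{P}_{i-1}(x;M-i+1)$ and $\mathcal{Q}_i(x)=\mathcal{P}_i(x;M-i)$ for some $1\le i\le M-2$, the dual recurrence gives $\mathcal{Q}_{i+1}(x)=(x-b_{M-i-1})\mathcal{P}_i(x;M-i)-u_{M-i}\mathcal{P}_{i-1}(x;M-i+1)$, and \eqref{eq:contiguous_assoc} with $c=M-i$ — so that $c-1=M-i-1$ and $c+1=M-i+1$ match the subscripts of $b$ and $u$ in the dual recurrence verbatim — identifies the right-hand side with $\mathcal{P}_{i+1}(x;M-i-1)=\mathcal{P}_{i+1}\bigl(x;M-(i+1)\bigr)$. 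This closes the induction and proves the lemma.

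As a conceptual check on why the statement should hold, I would note the transfer-matrix picture: stacking $(\mathcal{P}_{i+1},\mathcal{P}_i)$ into a vector turns the three-term recurrence into a product of $2\times 2$ matrices with rows $(x-b_k,\,-u_k)$ and $(1,\,0)$, and the dual polynomials amount to reversing the order of these factors over $k=0,\dots,M-1$. Identity \eqref{eq:contiguous_assoc} is exactly the mechanism that lets the diagonal slice $\{\mathcal{P}_i(x;M-i)\}_i$ absorb one factor at a time from the correct end, thereby reproducing the reversed product; this makes transparent both that the lemma is true and why no single fixed-shift recurrence can establish it.
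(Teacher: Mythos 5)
Your proof is correct, and it is worth noting that the paper itself gives no argument for this lemma --- it is imported verbatim from \cite{VZ} --- so your write-up supplies a self-contained proof of a fact the paper only cites. The structure is sound: the contiguous relation $\mathcal{P}_{i+1}(x;c-1)=(x-b_{c-1})\mathcal{P}_i(x;c)-u_c\mathcal{P}_{i-1}(x;c+1)$ is exactly the right auxiliary identity, its inductive verification checks out (the regrouping reassembles $\mathcal{P}_{i+1}(x;c)$ via the shift-$c$ recurrence and $\mathcal{P}_i(x;c+1)$ via the shift-$(c+1)$ recurrence, with the index-$0$ instance of the recurrence covered by your extended convention), and the final induction with $c=M-i$ matches the subscripts $b_{M-i-1}$ and $u_{M-i}$ of the dual recurrence exactly. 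One point you handled correctly but should flag explicitly: the paper's definition of the associated polynomials states $\mathcal{P}_0(x;c)=0$, which must be a typo for $\mathcal{P}_{-1}(x;c)=0$, $\mathcal{P}_0(x;c)=1$ --- with the literal initial condition the lemma already fails at $i=0$ (it would assert $1=0$) and at $i=2$ the term $-u_{M-1}$ in $\mathcal{Q}_2$ would have no counterpart. Your convention is the standard one and the only one under which the statement is true.
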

\begin{proposition} \label{prop:Q_contour} Let $k$ be a positive integer suppose $0 \le m < N \wedge k$. The orthogonal polynomials $Q_m^{(k)}$ satisfy
\[Q_m^{(k)}(x) = \frac{(-1)^m(k \wedge N - m)_{m+1}}{2\pi i} \oint \int_0^t \frac{f(x,s)s^{-m}}{f(x,t)} \frac{dsdt}{s(1-s)t(1-t)},\]
where
\[f(x,s) = \exp\left(\frac{x}{1-s}\right)(1-s)^{|k - N|}s^{k \wedge N}\]
for any positively oriented contour containing 0 in its interior but not passing through or containing 1.
\end{proposition}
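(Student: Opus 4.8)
Write $R_m^{(k)}(x)$ for the double integral on the right, and set $M=k\wedge N$, $\alpha=|k-N|$, so that $M,\alpha$ are non-negative integers with $\alpha+M=\max(k,N)$ and $\alpha+2M=k+N$. The plan is to show that $R_m^{(k)}$ is a monic polynomial of degree $m$ with $R_0^{(k)}=1$ and satisfying the recursion \eqref{eq:Q_recurrence} for $1\le m\le M-2$; since by Corollary \ref{cor:Q_recurrence} this data determines the $Q_m^{(k)}$ uniquely, this forces $R_m^{(k)}=Q_m^{(k)}$ for $0\le m\le M-1$. Observe first that, because $0\le m\le M-1$, the inner integrand $f(x,s)s^{-m}/(s(1-s))=e^{x/(1-s)}(1-s)^{\alpha-1}s^{M-m-1}$ is regular at $s=0$ (this is where $M-m-1\ge0$ is used), and since $\alpha$ is an integer its only finite singularity is at $s=1$, so $\int_0^t$ is path-independent for $t$ near the origin.

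\textbf{Polynomiality and normalization.} Substituting $s=tu$ in the inner integral and absorbing $e^{-x/(1-t)}f(x,t)^{-1}$ turns the integrand into $t^{-m-1}(1-t)^{-\alpha-1}$ times the entire-in-$t$ function $\Psi_m(x,t)=\int_0^1\exp\bigl(xt\tfrac{u-1}{(1-tu)(1-t)}\bigr)(1-tu)^{\alpha-1}u^{M-m-1}\,du$, whose $t^p$-coefficient is a polynomial in $x$ of degree $\le p$ (each power of $t$ from the exponential carries one power of $x$). Hence $R_m^{(k)}(x)$ is $(-1)^m(M-m)_{m+1}$ times the coefficient of $t^m$ in $(1-t)^{-\alpha-1}\Psi_m(x,t)$, a polynomial in $x$ of degree $\le m$; its $x^m$-coefficient is controlled by the single Beta integral $\int_0^1(1-u)^m u^{M-m-1}\,du=m!\,(M-m-1)!/M!$, and the prefactor $(-1)^m(M-m)_{m+1}=(-1)^m M!/(M-m-1)!$ is exactly what makes $R_m^{(k)}$ monic of degree $m$. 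The same computation at $m=0$ gives $R_0^{(k)}=M\cdot\int_0^1u^{M-1}\,du=1$.

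\textbf{The recursion.} For $1\le m\le M-2$ I would multiply $R_m^{(k)}$ by $x$ and integrate by parts in $s$ using $x\,e^{x/(1-s)}=(1-s)^2\tfrac{d}{ds}e^{x/(1-s)}$. The $s=0$ endpoint term vanishes since $M-m-1\ge1$, and the $s=t$ endpoint term equals, after the outer integral, the residue at $t=0$ of $t^{-m-2}$, hence also vanishes. What remains is $-\int_0^t e^{x/(1-s)}\tfrac{d}{ds}\bigl[(1-s)^{\alpha+1}s^{M-m-1}\bigr]ds$; expanding the derivative and writing $(1-s)^{\alpha}s^{j}=(1-s)^{\alpha-1}(s^{j}-s^{j+1})$ and $(1-s)^{\alpha+1}s^{j}=(1-s)^{\alpha-1}(s^{j}-2s^{j+1}+s^{j+2})$ re-expresses this as a linear combination of the integrals defining $R_{m+1}^{(k)}$, $R_m^{(k)}$, $R_{m-1}^{(k)}$. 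Collecting coefficients — using the Pochhammer ratios $(M-m)_{m+1}/(M-m+1)_{m}=M-m$ and $(M-m)_{m+1}/(M-m-1)_{m+2}=1/(M-m-1)$ together with $\alpha+M=\max(k,N)$ and $\alpha+2M=k+N$ — yields precisely $xR_m^{(k)}=R_{m+1}^{(k)}+(N+k-2m-1)R_m^{(k)}+(k-m)(N-m)R_{m-1}^{(k)}$, i.e.\ \eqref{eq:Q_recurrence}. The same manipulation at $m=0$ (valid when $M\ge2$) gives $R_1^{(k)}=xR_0^{(k)}-(k+N-1)R_0^{(k)}=x-(k+N-1)$ once one checks the spurious $R_{-1}^{(k)}$ term is zero (its integrand extends analytically over $t=0$). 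Comparison with Corollary \ref{cor:Q_recurrence} then finishes the identification. (Alternatively one can route through Lemma \ref{prop:dual_associated_relation} and a generating-function formula for the associated Laguerre polynomials, but the direct verification above sidesteps the extra index shifts.)

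\textbf{Main obstacle.} The crux is the bookkeeping in the recursion step: one must confirm that after the integration by parts the three surviving pieces reassemble into \eqref{eq:Q_recurrence} with exactly the coefficients $N+k-2m-1$ and $(k-m)(N-m)$, not versions corrupted by lower-order terms. Matching the quadratic coefficient $(k-m)(N-m)$ is the delicate point: one of its factors emerges from the ratio of consecutive Pochhammer prefactors and the other from the $s=1$-exponent arithmetic, and the $\alpha$-dependent contributions must recombine via $\alpha=|k-N|$. A secondary technicality is keeping everything uniform across the two regimes $k\le N$ and $k>N$ (where $\alpha=N-k$ versus $\alpha=k-N$) and at the two ends of the range $0\le m\le M-1$, which is precisely where the hypothesis $M-m-1\ge0$ and the exact form of the prefactor are needed.
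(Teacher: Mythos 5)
Your proposal is correct, but it proves the identity by a genuinely different route than the paper. The paper derives the contour formula top--down: it quotes the integral representation of the generating function $g(x,t)=\sum_i \mathcal{P}_i(x;c)(-t)^i/(c+1)_i$ for the associated Laguerre polynomials from Askey--Wimp, extracts $\mathcal{P}_i(x;c)$ as a $t$-residue, and then identifies $Q_m^{(k)}=\mathcal{P}_m(x;M-m)$ via the dual--associated relation of Lemma \ref{prop:dual_associated_relation}. You instead verify bottom--up that the right-hand side $R_m^{(k)}$ is a monic degree-$m$ polynomial with $R_0^{(k)}=1$, $R_1^{(k)}=x-(k+N-1)$, and satisfies the three-term recurrence \eqref{eq:Q_recurrence}, then invoke uniqueness from Corollary \ref{cor:Q_recurrence}. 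I checked the two computations that carry the weight: the substitution $s=tu$ does reduce the outer integral to the $t^m$-coefficient of $(1-t)^{-\alpha-1}\Psi_m(x,t)$ with leading coefficient $(-1)^m(M-m-1)!/M!$ cancelling the prefactor, and the integration by parts with $x e^{x/(1-s)}=(1-s)^2\frac{d}{ds}e^{x/(1-s)}$ produces, after expanding $\frac{d}{ds}[(1-s)^{\alpha+1}s^{M-m-1}]$ over the basis $(1-s)^{\alpha-1}s^{j}$, the coefficients $-(M-m-1)$, $\alpha+2M-2m-1$, $-(\alpha+M-m)$ on $\hat{I}_{m+1},\hat{I}_m,\hat{I}_{m-1}$, which combine with the Pochhammer ratios and $\alpha+2M=k+N$, $\alpha+M=\max(k,N)$ to give exactly $1$, $N+k-2m-1$, $(k-m)(N-m)$; the boundary terms at $s=0$ and $s=t$ vanish for the reasons you state. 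What each approach buys: the paper's is shorter but leans on an external generating-function identity (itself proved by an ODE argument in \cite{AW}) plus the dual--associated correspondence, while yours is self-contained given Corollary \ref{cor:Q_recurrence} and makes the dependence on the ranges $0\le m\le M-1$ and $M-m-1\ge 1$ explicit, at the cost of the coefficient bookkeeping you flag. One presentational caveat: you should say a word about why the double integral is independent of the admissible outer contour (the integrand is meromorphic in $t$ off the singularity at $1$ with its only enclosed pole at $t=0$), since your residue extraction implicitly fixes the contour to be a small circle where the segment $[0,t]$ avoids $1$; this is a minor point the paper also leaves implicit.
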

\begin{proof}
As before, let $\mathcal{P}_i(x) = (-1)^i i!L^{(\alpha)}_i(x)$ and let $c$ be a nonnegative integer. Define
\[g(x,t) = \sum_{i=0}^{\infty} \frac{\mathcal{P}_i(x;c)}{(c+1)_i}(-t)^i.\]
This is evaluated in \cite[(2.18)]{AW} by using the recurrence defining associated polynomials to establish a differential equation for $g(x,t)$. \cite{AW} showed
\[g(x,t) = \frac{c}{1-t}\int_0^t \exp\left(\frac{x}{1-s} - \frac{x}{1-t}\right) \left(\frac{1-s}{1-t}\right)^{\alpha} \left(\frac{s}{t}\right)^c \frac{ds}{s(1-s)}.\]
From here we get an expression for $\mathcal{P}_i(x;c)$ by integrating $g(x;t)t^{-i-1}$ along a contour containing 0 in its interior. Finally, the result follows from Lemma \ref{prop:dual_associated_relation} as the orthogonal polynomials $Q_m^{(k)}$ can be evaluated via associated polynomials. In the cases $k > N$, $k \le N$ one makes the same substitutions for $M,\alpha$ as in the proof of Corollary \ref{cor:Q_recurrence}. Lastly, we note that the contour cannot contain 1 as $f(x,t)$ has an essential singularity at $t = 1$.
\end{proof}
\begin{remark}
The essential singularity at 1 is the reason that this paper deals only with the hard edge limit instead both edge limits at zero temperature. In the soft edge limit, we could not find a way of adjusting the contours to make a steepest descent argument work on account of the essential singularity. This may seem a little strange as one can relate the Hermite polynomials to the Laguerre polynomials with parameter equal to $\alpha = \pm\frac{1}{2}$, and there were no corresponding issues with the Hermite analysis in \cite{GK}. The explanation for this is that there are actually two different integration identities at play here. For the Laguerre polynomials, the more analogous series to the one in \cite{GK} would be
\[g(x,t) = \sum_{i=0}^{\infty} \frac{\mathcal{P}_{i}(x;c)}{(c+1)_i(c+\alpha+1)_i} (-t)^{2i}.\]
Using the defining recurrence for associated polynomials one can show that this satisfies the differential equation:
\[\frac{g''(x,t)}{4} + \frac{4t^2 + 4c + 2\alpha + 1}{4t}g'(x,t) + (2c + \alpha + 1 + t^2 - x)g(x,t) + c(c+\alpha) \left(\frac{g(x,t)-1}{t^2}\right) = 0\]
where the derivatives are with respect to $t$. For $c > 0$, the only solution to this nonsingular at the origin is
\[g(x,t) = \pi c(c+\alpha) \int_0^t e^{s^2 - t^2} \left(\frac{s}{t}\right)^{\alpha + 2c}\left(Y_{\alpha}(-2it\sqrt{x})J_{\alpha}(-2is\sqrt{x}) - J_{\alpha}(-2it\sqrt{x})Y_{\alpha}(-2is\sqrt{x})\right)\frac{ds}{s},\]
where $J_{\alpha}, Y_{\alpha}$ are the Bessel functions of the first and second kind respectively. We remark that $g(x,t)$ is entire as a function of $t$ but in the general case $Y_{\alpha}$ needs a branch cut in order to be well-defined, so some care must be taken with this expression. In the Hermite case where $\alpha = \pm\frac{1}{2}$, $J_{\alpha}, Y_{\alpha}$ reduce to polynomial multiples of sine and cosine making this expression tractable. Unfortunately, for general $\alpha$, we were still unable to perform steepest descent using this formula on account of the difficulty in evaluating $Y_{\alpha}, J_{\alpha}$ for $\alpha$ growing polynomially in $N$, which would be neccessary for the soft edge case.
\end{remark}

\subsection{Covariances and Consequences} 
We will now decompose the summations from Proposition \ref{prop:laguerre_infinity_additive_polymer} in terms of the orthogonal polynomials $Q_m^{(k)}(x)$ to derive an alternate expression for $\cov(\xi_{a,k}, \xi_{a',k'})$. The tools we have established thus far are sufficient to decompose the diffusion kernels $K^{k,n}(a \to b)$. However, we still do not have the tools to do the same for the covariances of a single row. To this end, in the proof of Proposition \ref{prop:diagonalization_Q}, we will employ and more formally introduce the diagonalization techniques of \cite{AHV} touched upon in the introduction.
\begin{lemma} \label{lem:diffusive_kernel_expansion} The following holds for positive integers $r \ge k$:
\[
K^{k,r}(a \to b)  = \frac{l_{a,k}}{\sqrt{(k+1)(r+1)}}\sum_{m=0}^{k \wedge N - 1} \tilde{Q}_m^{(r)}(l_{b,r}) \tilde{Q}_m^{(k)}(l_{a,k}) \prod_{j=k}^{r-1} \left(1 - \frac{m+1}{j+1}\right)^{\frac12},
\]
where $a \le k \wedge N, b \le r \wedge N$.
\end{lemma}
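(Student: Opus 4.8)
The plan is to diagonalize the composition of stochastic matrices $A_k\cdots A_{r-1}$ using the orthogonal polynomials $Q_m^{(k)}$ and their normalized versions $\tilde Q_m^{(k)}$. The key observation is that $K^{k,r}(a\to b) = (A_k\cdots A_{r-1})_{a,b} = \mathbb{P}_k(l_{a,k}\to l_{b,r})$ composed $r-k$ times, which is precisely the kernel of the composite operator $D_{r-1}\circ\cdots\circ D_k$ acting on $\mathcal{F}_k$, transported through the identification of functions with their values on roots. By Proposition \ref{prop:diagonalization_Q}, each $D_j$ sends $Q_m^{(j)}$ to $(1-\tfrac{m+1}{j+1})Q_m^{(j+1)}$, so the composite sends $Q_m^{(k)}$ to $\big(\prod_{j=k}^{r-1}(1-\tfrac{m+1}{j+1})\big)Q_m^{(r)}$. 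Thus $\{Q_m^{(k)}\}_{m=0}^{k\wedge N-1}$ (or their normalized counterparts) form an eigenbasis, and I would expand the kernel in this basis.

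Concretely, first I would verify that $D_k$ is self-adjoint in the appropriate sense between the inner products $\langle\cdot,\cdot\rangle_k$ and $\langle\cdot,\cdot\rangle_{k+1}$ — more precisely, that the matrix of $D_k$ with entries $\mathbb{P}_k(x\to y)$ satisfies a detailed-balance-type relation with the weights $w_k(x), w_{k+1}(y)$, so that expanding $K^{k,r}(a\to b)$ against the orthonormal system $\{\tilde Q_m^{(k)}\}$ on the source side and $\{\tilde Q_m^{(r)}\}$ on the target side produces a diagonal sum. Writing $\delta$ for the indicator function at a single root (extended to $\mathcal{F}_k$), one has $K^{k,r}(a\to b) = $ (coefficient extraction), and expanding $\mathbf 1_{\{l_{b,r}\}}$ via the reproducing-kernel identity $\sum_m \tilde Q_m^{(r)}(x)\tilde Q_m^{(r)}(l_{b,r}) w_r(l_{b,r}) = \delta_{x,l_{b,r}}$ on $\mathcal{Y}_r$, then applying $D_{r-1}\circ\cdots\circ D_k$ and using Proposition \ref{prop:diagonalization_Q}, yields
\[
K^{k,r}(a\to b) = w_r(l_{b,r})\sum_{m=0}^{k\wedge N-1} \tilde Q_m^{(k)}(l_{a,k})\tilde Q_m^{(r)}(l_{b,r})\prod_{j=k}^{r-1}\Big(1-\frac{m+1}{j+1}\Big).
\]
It then remains to identify the prefactor. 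By \eqref{eq:weight_evaluation}, $w_r(l_{b,r}) = -P_{r+1}(l_{b,r})/P_{r+1}''(l_{b,r}) = l_{b,r}/(r+1)$; and by \eqref{eq:Q_norm}, the normalization constants satisfy $\langle Q_m^{(k)},Q_m^{(k)}\rangle_k = (k-m)_{m+1}(N-m)_{m+1}/(k+1)$, which must be reconciled with the claimed scalar $l_{a,k}/\sqrt{(k+1)(r+1)}$. The point is that one of the two square-root factors $\sqrt{\langle Q_m^{(k)},Q_m^{(k)}\rangle_k}$ hidden in $\tilde Q_m^{(k)}$ combines with $w_r(l_{b,r})$ via the ratio identity \eqref{eq:Q_norm_ratio}: replacing $\tilde Q_m^{(r)}(l_{b,r})$ by $Q_m^{(r)}(l_{b,r})/\sqrt{\langle Q_m^{(r)},Q_m^{(r)}\rangle_r}$ and absorbing $w_r(l_{b,r})/\sqrt{\langle Q_m^{(r)},Q_m^{(r)}\rangle_r}$ and one power of $\sqrt{\langle Q_m^{(k)},Q_m^{(k)}\rangle_k}$, the $m$-dependent normalization pieces telescope through \eqref{eq:Q_norm_ratio} against the product $\prod_{j=k}^{r-1}(1-\tfrac{m+1}{j+1})$, leaving behind exactly the $m$-independent factor $l_{a,k}/\sqrt{(k+1)(r+1)}$ and the stated product to the power $\tfrac12$. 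I would finish by carefully checking the bookkeeping of which power of each norm lands where, using that $l_{a,N}\cdot(\text{something})$ — more simply, that $l_{a,k} = w_k(l_{a,k})(k+1)$ supplies the remaining $l_{a,k}$ after one factor $w_k(l_{a,k})$ is spent symmetrizing the source side.

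The main obstacle I anticipate is the repeated-roots subtlety: for $r > N+1$ the polynomial $P_r$ has multiple zero roots, so $\mathcal{X}_r \neq \mathcal{Y}_r$, the operator $D_k$ is not symmetric in the naive sense, and the reproducing-kernel expansion is only valid on $\mathcal{Y}_r$ rather than on all of $\mathcal{X}_r$. One must argue that the contributions of $x\in\mathcal{X}_r\setminus\mathcal{Y}_r$ (the extra zeros) drop out of $K^{k,r}(a\to b)$ for $a\le k\wedge N$, $b\le r\wedge N$ — which should follow from the block structure of $A_k$ in \eqref{eq:laguerre_trans_prob_one_level} together with Propositions \ref{prop:op_poly_preservation} and \ref{prop:diagonalization_Q}, since the polynomial-preservation only uses the degrees $m\le |\mathcal{Y}_k|-1$ and the indices $a,b$ in question always correspond to nonzero roots. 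A clean way to handle this is to induct on $r-k$: the base case $r=k$ is the reproducing-kernel identity on $\mathcal{Y}_k$ with the norm formula \eqref{eq:Q_norm}, and the inductive step applies a single $D_{r-1}$ using Proposition \ref{prop:diagonalization_Q} and the norm-ratio \eqref{eq:Q_norm_ratio} to pass from $r-1$ to $r$, with the degenerate roots contributing nothing because $b\le r\wedge N$. I expect the actual manipulations after this point to be routine Pochhammer-symbol algebra.
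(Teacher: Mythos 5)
Your overall strategy coincides with the paper's: realize $K^{k,r}(a\to b)$ as the kernel of $D_{r-1}\cdots D_k$, expand in the eigenbasis supplied by Proposition \ref{prop:diagonalization_Q}, and convert to the normalized polynomials via \eqref{eq:Q_norm} and \eqref{eq:Q_norm_ratio}. However, your central displayed identity
\[
K^{k,r}(a\to b) = w_r(l_{b,r})\sum_{m=0}^{k\wedge N-1}\tilde Q_m^{(k)}(l_{a,k})\tilde Q_m^{(r)}(l_{b,r})\prod_{j=k}^{r-1}\Bigl(1-\frac{m+1}{j+1}\Bigr)
\]
is false, and no amount of downstream ``absorbing'' of norm factors can repair a wrong identity. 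Already for $k=1$, $r=2$ one has $l_{1,1}=N$, $l_{b,2}=N\pm\sqrt N$, $w_1(N)=N/2$, so $K^{1,2}(1\to b)=(N-l_{b,2})^{-2}w_1(N)=\tfrac12$; whereas your formula gives $\frac{N\pm\sqrt N}{3}\cdot\sqrt{2/N}\cdot\sqrt{3/(2N)}\cdot\tfrac12=\frac{1\pm N^{-1/2}}{2\sqrt 3}\neq\tfrac12$. The error comes from expanding the delta at the \emph{target} $l_{b,r}$ and then ``applying $D_{r-1}\circ\cdots\circ D_k$'': that composite maps $\mathcal F_k\to\mathcal F_r$ and cannot act on a function of the target variable. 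Extracting the target-side coefficient requires the adjoint of $D_j$ between $\langle\cdot,\cdot\rangle_{j}$ and $\langle\cdot,\cdot\rangle_{j+1}$, and on the \emph{normalized} polynomials the adjoint (and $D_j$ itself) carries the factor $\sqrt{\tfrac{j+1}{j+2}}\bigl(1-\tfrac{m+1}{j+1}\bigr)^{1/2}$ --- the square root of the eigenvalue, not the eigenvalue --- precisely because the norms $\langle Q_m^{(j)},Q_m^{(j)}\rangle_j$ vary with $j$ according to \eqref{eq:Q_norm_ratio}. Your proposal conflates the monic eigenrelation with the normalized one, which is why both the prefactor and the power of the product come out wrong.

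The clean route (the paper's) is to expand on the source side: with $f=\1(x=l_{a,k})$ one has $K^{k,r}(a\to b)=(D_{r-1}\cdots D_kf)(l_{b,r})$ and $f=\sum_m\langle\tilde Q_m^{(k)},f\rangle_k\,\tilde Q_m^{(k)}$ with $\langle\tilde Q_m^{(k)},f\rangle_k=w_k(l_{a,k})\tilde Q_m^{(k)}(l_{a,k})=\frac{l_{a,k}}{k+1}\tilde Q_m^{(k)}(l_{a,k})$ by \eqref{eq:weight_evaluation}. Pushing forward with the monic eigenrelation gives
\[
(D_{r-1}\cdots D_k\tilde Q_m^{(k)})(l_{b,r})=\sqrt{\frac{\langle Q_m^{(r)},Q_m^{(r)}\rangle_r}{\langle Q_m^{(k)},Q_m^{(k)}\rangle_k}}\;\prod_{j=k}^{r-1}\Bigl(1-\frac{m+1}{j+1}\Bigr)\,\tilde Q_m^{(r)}(l_{b,r}),
\]
and \eqref{eq:Q_norm_ratio} converts the norm ratio times the full product into $\sqrt{(k+1)/(r+1)}\,\prod_{j}(1-\frac{m+1}{j+1})^{1/2}$, producing exactly the stated prefactor $l_{a,k}/\sqrt{(k+1)(r+1)}$. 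Your closing remark on repeated roots is on target ($\mathbb P_k(x\to y)=0$ for $y\in\mathcal Y_{k+1}$ and $x\notin\mathcal Y_k$, so the degenerate zero roots never interfere when $a\le k\wedge N$, $b\le r\wedge N$), but as written the proof does not go through without redoing the central computation.
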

\begin{proof}
Let $f(x) = \1(x = l_{a,k})$. Then
\begin{align*}
    K^{k,r}(a \to b) &= (D_{r-1}\cdots D_k f)(l_{b,r}) \\
    &= \sum_{m=0}^{k \wedge N - 1} (D_{r-1}\cdots D_k \tilde{Q}_m^{(k)})(l_{b,r}) \langle \tilde{Q}_m^{(k)}, f \rangle_k.
\end{align*}
Applying Proposition \ref{prop:diagonalization_Q}, the right-hand side becomes 
\[\sum_{m=0}^{k \wedge N - 1} \left( \tilde{Q}_m^{(r)}(l_{b,r}) \sqrt{\frac{\langle Q_m^{(r)}, Q_m^{(r)} \rangle_r}{\langle Q_m^{(k)}, Q_m^{(k)} \rangle_k}} \prod_{j=k}^{r-1} \left(1 - \frac{m+1}{j+1}\right) \right) \times \left(\frac{l_{a,k}}{k+1}\tilde{Q}_m^{(k)}(l_{a,k})\right), \]
which can be simplified with \eqref{eq:Q_norm_ratio} to get the desired.
\end{proof}
\begin{proposition}
\label{prop:covariance_summation}
Let $\{\xi_{a,k}\}_{1 \le a \le k \le n}$ be random variables distributed according to the Laguerre $\infty$-corners process with $n$ rows. Then for positive integers $k \ge k'$,
\begin{align*} \cov(\xi_{a,k}, \xi_{a',k'}) =& \frac{2l_{a,k}l_{a',k'}}{n+1}\sum_{m=0}^{k' \wedge N -1} \frac{\tilde{Q}_{m}^{(k)}(l_{a,k})\tilde{Q}_{m}^{(k')}(l_{a',k'})}{(m+1)\sqrt{(k+1)(k'+1)}}\prod_{j=k}^{n-1} \left(1 - \frac{m+1}{j+1}\right)^{\frac12} \prod_{j'=k'}^{n-1} \left(1 - \frac{m+1}{j'+1}\right)^{\frac12} \\ &+ \sum_{r=k}^{n-1} \frac{2 l_{a,k} l_{a',k'}}{r+1} \sum_{m=0}^{k' \wedge N-1}  \frac{\tilde{Q}_{m}^{(k)}(l_{a,k})\tilde{Q}_{m}^{(k)}(l_{a',k'})}{\sqrt{(k+1)(k'+1)}}\prod_{j=k}^{r-1} \left(1 - \frac{m+1}{j+1}\right)^{\frac12} \prod_{j'=k'}^{r-1} \left(1 - \frac{m+1}{j'+1}\right)^{\frac12}.
\end{align*}
\end{proposition}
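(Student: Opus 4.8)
The plan is to feed the additive‑polymer decomposition of Proposition~\ref{prop:laguerre_infinity_additive_polymer} into the bilinear form $\cov(\xi_{a,k},\xi_{a',k'})$, expand every diffusion kernel via Lemma~\ref{lem:diffusive_kernel_expansion}, and collapse the resulting multiple sums using the orthonormality of the $\tilde Q_m^{(j)}$. First I would record the independence structure underlying Proposition~\ref{prop:laguerre_infinity_additive_polymer}: since each $\eta^j$ is, conditionally on $\xi^{j+1}$, a centered Gaussian with the deterministic variance \eqref{eq:eta_variance}, the family $\{\eta_{b,j}\}$ is mutually independent and jointly independent of the top level $\xi^n$. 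Writing out $\xi_{a,k}$ and $\xi_{a',k'}$ through Proposition~\ref{prop:laguerre_infinity_additive_polymer}, and using $k\ge k'$ so that the noise levels $k',\dots,k-1$ occur only in $\xi_{a',k'}$ and contribute nothing, the covariance splits into a top‑level term
\[\sum_{b,b'}K^{k,n}(a\to b)\,K^{k',n}(a'\to b')\,\cov(\xi_{b,n},\xi_{b',n})\]
and a noise term $\sum_{r=k}^{n-1}\sum_{b}K^{k,r}(a\to b)\,K^{k',r}(a'\to b)\,\var(\eta_{b,r})$, all cross terms vanishing.

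For the noise term I would use $\var(\eta_{b,r})=\tfrac{2l_{b,r}}{r+1}=2w_r(l_{b,r})$, the weight of $\langle\cdot,\cdot\rangle_r$ by \eqref{eq:eta_variance} and \eqref{eq:weight_evaluation}, substitute the two kernel expansions of Lemma~\ref{lem:diffusive_kernel_expansion}, and perform the sum over $b$ first. Because the support $\mathcal{Y}_r$ of $\langle\cdot,\cdot\rangle_r$ is exactly $\{l_{b,r}:1\le b\le N\wedge r\}$ — for $r\ge N$ the only repeated root of $P_r$ is $0$, which is excluded from $\mathcal{Y}_r$ — this inner sum equals $\langle\tilde Q_m^{(r)},\tilde Q_{m'}^{(r)}\rangle_r=\delta_{m,m'}$, collapsing the double index sum to a single sum over $0\le m\le k'\wedge N-1$; relabeling $r$ produces the second summand in the statement, with the two orthogonal‑polynomial factors coming out as $\tilde Q_m^{(k)}(l_{a,k})\,\tilde Q_m^{(k')}(l_{a',k'})$. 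For the top‑level term I would invoke the single‑level covariance identity
\[\cov(\xi_{b,n},\xi_{b',n})=\frac{2l_{b,n}l_{b',n}}{(n+1)^2}\sum_{m=0}^{N-1}\frac{\tilde Q_m^{(n)}(l_{b,n})\,\tilde Q_m^{(n)}(l_{b',n})}{m+1},\]
which is exactly the point at which the diagonalization technique of \cite{AHV} enters: the precision matrix of $\xi^n$ read off from Corollary~\ref{cor:xi_k_conditional_top_level} is diagonalized by the dual Laguerre polynomials $\tilde Q_m^{(n)}$, the mode $\tilde Q_m^{(n)}$ carrying eigenvalue $\tfrac{m+1}{2}$ after rescaling by the weight $w_n$. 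Inserting this together with the kernel expansions of Lemma~\ref{lem:diffusive_kernel_expansion} and summing over $b,b'$ first, the identity $\sum_b l_{b,n}\tilde Q_m^{(n)}(l_{b,n})\tilde Q_p^{(n)}(l_{b,n})=(n+1)\langle\tilde Q_m^{(n)},\tilde Q_p^{(n)}\rangle_n=(n+1)\delta_{m,p}$ (using $l_{b,n}=(n+1)w_n(l_{b,n})$) collapses the triple sum to a single sum over $m$, yielding the first summand.

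The main obstacle is the single‑level covariance identity, i.e. importing the diagonalization of \cite{AHV} into the present normalization: one must check that the matrix obtained from Corollary~\ref{cor:xi_k_conditional_top_level}, with off‑diagonal entries $-(l_{i,n}-l_{j,n})^{-2}$ and diagonal $\tfrac{n-N+1}{2}l_{i,n}^{-2}+\sum_{j\ne i}(l_{i,n}-l_{j,n})^{-2}$, becomes after conjugation by $w_n$ the operator whose eigenfunctions are the $Q_m^{(n)}$ with eigenvalue $\tfrac{m+1}{2}$ (this is where the Laguerre differential equation and the identities of Section~\ref{sec:Q_m_properties} are used). Everything else is routine bookkeeping, the only care needed being with the truncations $N\wedge k$ and $N\wedge r$, the degeneracy of $\eta_{b,r}$ for $b>N$, and the description of $\mathcal{Y}_r$, which omits the repeated zero root of $P_r$ when $r>N$.
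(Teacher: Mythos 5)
Your proposal follows essentially the same route as the paper: split the covariance via Proposition \ref{prop:laguerre_infinity_additive_polymer} into a top-level term and a noise term, expand all diffusion kernels with Lemma \ref{lem:diffusive_kernel_expansion}, collapse the $b,b'$ sums by the orthogonality relation \eqref{eq:Q_orthogonality_relation}, and import the single-level diagonalization of \cite{AHV} exactly as the paper does. The only divergence is your normalization of the single-level covariance, $\cov(\xi_{b,n},\xi_{b',n})=\tfrac{2l_{b,n}l_{b',n}}{(n+1)^2}\sum_m\tfrac{1}{m+1}\tilde{Q}_m^{(n)}(l_{b,n})\tilde{Q}_m^{(n)}(l_{b',n})$, versus the paper's $(n+1)$ in the denominator; your version is in fact the one consistent with the stated first summand (check the case $k=k'=n$, where the second summand is empty), so this is not a gap in your argument.
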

\begin{proof}
Using Proposition \ref{prop:laguerre_infinity_additive_polymer},
\begin{align} 
\cov(\xi_{a,k}, \xi_{a', k'}) =& \sum_{b,b'=1}^n K^{k,n}(a \to b)K^{k',n}(a' \to b')\cov(\xi_{b,n}, \xi_{b',n})  \label{eq:covar_decomp_diffusive_kernel} \\
&+ \sum_{r=k}^{n-1} \sum_{b=1}^r K^{k,r}(a \to b)K^{k',r}(a' \to b) \var(\eta_{b,r}).  \nonumber
\end{align}
These two summations will respectively equal the two from the proposition statement. By Lemma \ref{lem:diffusive_kernel_expansion}, the latter one equals
\begin{align} \sum_{r=k}^{n-1}\sum_{b=1}^r \sum_{m_1 = 0}^{k \wedge N-1} \sum_{m_2 = 0}^{k' \wedge N - 1} &\frac{2l_{a,k}l_{a',k'}l_{b,r}}{(r+1)^2\sqrt{(k'+1)(k+1)}} \tilde{Q}_{m_1}^{(k)}(l_{a,k})\tilde{Q}_{m_1}^{(r)}(l_{b,r})\tilde{Q}_{m_2}^{(k)}(l_{a',k'})\tilde{Q}_{m_2}^{(r)}(l_{b,r}) \times  \label{eq:polymer_second_term_expansion} \\
&\times \prod_{j=k}^{r-1} \left(1 - \frac{m_1+1}{j+1}\right)^{\frac12} \prod_{j'=k'}^{r-1} \left(1 - \frac{m_2+1}{j'+1}\right)^{\frac12}. \nonumber 
\end{align}
Recall by orthogonality,
\begin{equation} \label{eq:Q_orthogonality_relation}
\sum_{b=1}^r \frac{l_{b,r}}{r+1} \tilde{Q}_{m_1}^{(r)}(l_{b,r})\tilde{Q}_{m_2}^{(r)}(l_{b,r}) = \1(m_1 = m_2).
\end{equation}
Applying this relation to the $b$-dependent terms of \eqref{eq:polymer_second_term_expansion} gives the desired summation.
\par 
For the first summation of \eqref{eq:covar_decomp_diffusive_kernel}, \cite{AHV} showed that the matrix
\[
\Sigma = \left\{\frac{1}{4l_{b,n}l_{b',n}}\cov\left(\xi_{b,n},\xi_{b',n}\right)\right\}_{1 \le b,b' \le N}
\]
has an orthonormal basis of eigenvectors given by
\[
v_m = \frac{1}{\sqrt{n+1}} \langle l_{1,n}^{1/2}\tilde{Q}_m^{(n)}(l_{1,n}),\ldots l_{N,n}^{1/2} \tilde{Q}_{m}^{(n)}(l_{N,n}) \rangle
\]
for $m = 0,1,\ldots, N - 1$ with corresponding eigenvalues $\frac{1}{2}, \frac{1}{4},\ldots, \frac{1}{2N}$. Hence,
\[
\cov(\xi_{b,n}, \xi_{b',n}) = \frac{2l_{b,n}l_{b',n}}{n+1} \sum_{m=0}^{N-1} \frac{1}{m+1} \tilde{Q}_m^{(n)}(l_{b,n})\tilde{Q}_m^{(n)}(l_{b',n}).
\]
With this equality along with Lemma \ref{lem:diffusive_kernel_expansion}, the first summation of \eqref{eq:covar_decomp_diffusive_kernel} becomes
\begin{align*} 
\sum_{b,b' = 1}^n \frac{2l_{b,n}l_{b',n}l_{a,k}l_{a',k'}}{(n+1)^2}\sum_{m_1=0}^{k-1}\sum_{m_2 = 0}^{k'-1}\sum_{m_3 = 0}^{N-1} &\frac{\tilde{Q}_{m_1}^{(n)}(l_{b,n})\tilde{Q}_{m_1}^{(k)}(l_{a,k})\tilde{Q}_{m_2}^{(n)}(l_{b',n})\tilde{Q}_{m_2}^{(k')}(l_{a',k'})}{\sqrt{(k+1)(k'+1)}}\frac{\tilde{Q}_{m_3}^{(n)}(l_{b,n})\tilde{Q}_{m_3}^{(n)}(l_{b',n})}{m_3+1} \times \\
&\times \prod_{j=k}^{n-1} \left(1 - \frac{m_1+1}{j+1}\right)^{\frac12} \prod_{j'=k'}^{n-1} \left(1 - \frac{m_2+1}{j'+1}\right)^{\frac12}.
\end{align*}
Now, interchange the summations such that the ones over $b,b'$ are the innermost ones. One sees that both summations are of the same form as the left-hand side of the orthogonality relation \eqref{eq:Q_orthogonality_relation}. Simplifying these in a similar manner yields the desired.
\end{proof}

\section{Hard Edge Limit} 
\label{sec:hard_edge_limit}
This section will be devoted to proving our main and secondary results. Our proofs will begin in Section \ref{sec:hard_edge_Q_asymp}, where we apply steepest descent to our contour integral formula in order to compute the asymptotics of the orthogonal polynomials $Q_m^{(k)}$ evaluated as the smallest roots of Laguerre polynomials. Following this, in Section \ref{sec:proof_main_hard_edge}, we apply these asymptotics to Proposition \ref{prop:covariance_summation} to obtain the main result.
\par 
Before we proceed with the proofs, however, we begin with a brief exposition on Sturm-Liouville theory and its connection to Bessel and Airy functions. The goal of this exposition will be to give some informal intuition as to why our hard edge result is a natural counterpart to the soft edge results of \cite{GK, AHV}. This discussion will also prove useful in our final Section \ref{sec:additive_polymer_limit}, which will be devoted to proving Theorem \ref{thm:secondary_polymer_result}.

\subsection{Airy, Bessel Bases through Sturm-Liouville Theory} 
\label{sec:airy_bessel_sturm}
The Airy function $\Ai(x)$ is the solution to the differential equation
\[y'' = xy\] 
satisfying $\Ai(x) \to 0$ as $x \to \infty$. It was shown in \cite{GK, AHV} that when our Appell sequence $P_0(x),P_1(x),\ldots$ is given by the Hermite polynomials, under proper scaling the asymptotics of $Q_m^{(k)}(h_{i,k})$ as $k \to \infty$ is given by
\begin{equation} \label{eq:Fourier_Airy}
\frac{\Ai(\mathfrak{a}_i + x)}{\Ai' (\mathfrak{a}_i)}
\end{equation}
where $i$ is some fixed positive integer, $x$ is some scaling of $m$, $h_{i,k}$ is the $i$-th smallest root of the Hermite polynomial $H_k$, and $\mathfrak{a}_1 > \mathfrak{a}_2 > \cdots$ are the real roots of the Airy function. Now, consider the matrix with rows given by $v_m$ as defined in the proof of Proposition \ref{prop:covariance_summation}. This matrix is unitary and thus we get orthogonality of columns:
\begin{equation} \label{eq:column_orthogonality}
\sum_{m=0}^{N-1} \tilde{Q}^{(n)}_m(l_{i,n})\tilde{Q}^{(n)}_m(l_{j,n}) = \frac{n+1}{l_{i,n}}\1(i = j).
\end{equation}
for all positive integers $i,j \le n$. In the Hermite case, the same identity holds with a different constant on the right hand side and $n = N$. One might then expect that the limiting functions \eqref{eq:Fourier_Airy} are orthonormal in $L^2((0,\infty))$. Not only does this orthogonality relation hold true, but they also form a basis for a large class of functions. Indeed, the limiting functions are referred to as the Fourier-Airy series and this orthonormality results falls under the larger framework of Sturm-Liouville theory (see \cite{Tit} for the classic primer), which deals with the eigenvalue problem $Ly = \lambda y$ for operators of the form
\[L = \frac{1}{\omega(x)} \left(-\frac{d}{dx} \left(p(x)\frac{d}{dx}\right) + q(x)\right).\]
Suppose $-\infty \le a < b \le \infty$. Under certain regularity constraints on the functions $p(x), q(x), \omega(x)$ and boundary conditions at $a,b$, the eigenfunctions $y_n$ of $L$ forms an orthonormal basis with respect to the weight function $w(x)$. The Fourier-Airy series consists of precisely these eigenfunctions when $L$ corresponds to the Airy differential equation.
\par 
As one might expect from the finite $\beta$ case, the Bessel function emerges in place of the Airy function in the hard edge limit. The Bessel function of the first kind $J_{v}(x)$ is defined to be the solution to the differential equation 
\begin{equation} \label{eqn:Bessel_DE}
x^2y'' + xy' + (x^2 - v^2)y = 0
\end{equation}
that satisfies
\[J_{v}(x) = \sum_{k=0}^{\infty} (-1)^k \frac{(x/2)^{2k+v}}{k!\Gamma(k+v+1)}.\] As we will see regime of interest for the hard edge involves a spatial scaling of $N^{-1}$. Under this scaling, as $N \to \infty$ \eqref{eq:column_orthogonality} becomes an integral with orthogonality weight 1 over $(0,1)$ indicating $a = 0, b = 1$. The solutions to the eigenvalue problem for $L$ corresponding to the Bessel differential equation ($w(x) = p(x) = x$ and $q(x) = \frac{v^2}{x}$) are given by
\begin{equation} \label{eq:Bessel_Fourier}
\frac{J_v(j_{i,v}y)}{J_v'(j_{i,v})}.
\end{equation}
As will be shown in the next section, the limit of $Q_m^{(k)}(l_{k+1-i,k})$ is indeed given by these eigenfunctions transformed to have orthogonality weight 1, which is reasonable in light of \eqref{eq:column_orthogonality}
\par 
The functions \eqref{eq:Bessel_Fourier} are referred to as the Fourier-Bessel series. We end with a definition and a lemma that states conditions under which a function can be decomposed pointwise in terms of the Fourier-Bessel series. This lemma will also prove useful in Section \ref{sec:additive_polymer_limit}. 
\begin{definition} (Normalized Fourier-Bessel Series) In analogy with the orthogonal polynomials $\tilde{Q}_m^{(k)}(x)$ define
\[\tilde{J}_{b,v}(y) = \frac{J_v(j_{b,v}y)}{J_v'(j_{b,v})}.\]
\end{definition}
\begin{lemma}\cite[Sec. 18.24]{Wat}\label{lem:Tit46_laguerre_expansion} For positive integers $i \neq j$ and nonnegative integers $v$, the functions $\tilde{J}_{i,v}, \tilde{J}_{j,v}$ are orthogonal in $L^2((0,1), w(y) = y)$. Furthermore, if $f:[0,1]\to\R$ is a continuous function having finite total variation in the interval $(a,b)$ for some $0 < a < b < 1$, then the following Fourier-like decomposition holds for all $y \in (a,b)$:
\[f(y) = 2 \sum_{i = 0}^{\infty}\tilde{J}_{i,v}(y) \int_0^1 \tilde{J}_{i,v}(z)f(z)zdz. \]
\end{lemma}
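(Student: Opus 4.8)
The plan is to separate the two assertions. For orthogonality, I would run the standard Sturm--Liouville argument. Setting $u_i(y) = J_v(j_{i,v}y)$, equation \eqref{eqn:Bessel_DE} rewrites as $(y u_i')' + (j_{i,v}^2 y - v^2/y)u_i = 0$, so $u_i$ is an eigenfunction with eigenvalue $j_{i,v}^2$ of the operator attached to $p(y)=\omega(y)=y$, $q(y)=v^2/y$ on $(0,1)$. Multiplying the equation for $u_i$ by $u_j$, subtracting the symmetric relation, and integrating over $(\varepsilon,1)$ gives
\[(j_{i,v}^2 - j_{j,v}^2)\int_\varepsilon^1 u_i u_j\, y\, dy = \bigl[\,y(u_i' u_j - u_i u_j')\,\bigr]_\varepsilon^1 .\]
The boundary term at $y=1$ vanishes because $u_i(1)=u_j(1)=0$ by definition of the $j_{i,v}$; at $y=\varepsilon$, using $u_i(y)=O(y^{v})$ and $u_i'(y)=O(y^{v-1})$ as $y\to 0$ (with $u_i(0)=1$, $u_i'(0)=0$ when $v=0$), the term is $O(\varepsilon^{2v})\to 0$. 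Hence $\int_0^1 u_i u_j\,y\,dy=0$ for $i\neq j$, and dividing by $J_v'(j_{i,v})J_v'(j_{j,v})$ yields the orthogonality of the $\tilde J_{i,v}$. The same computation with $i=j$, together with the classical Lommel identity $\int_0^1 J_v(j_{i,v}y)^2\,y\,dy = \tfrac12 J_v'(j_{i,v})^2$ (obtained by one integration by parts from the Bessel equation), gives $\int_0^1 \tilde J_{i,v}(y)^2\,y\,dy = \tfrac12$; this normalization is what produces the factor $2$ in the stated expansion.

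For the pointwise decomposition I would present it as the classical Fourier--Bessel (Dini) convergence theorem and reduce it to ordinary Fourier cosine convergence. Write the partial sums as $S_M(y) = 2\sum_{i\le M} \tilde J_{i,v}(y)\int_0^1 \tilde J_{i,v}(z)f(z)z\,dz = \int_0^1 K_M(y,z)f(z)z\,dz$ with kernel $K_M(y,z) = 2\sum_{i\le M}\tilde J_{i,v}(y)\tilde J_{i,v}(z)$. Using the large-index asymptotics $j_{i,v} = \pi(i + v/2 - 1/4) + O(1/i)$ and, for argument bounded away from $0$, $J_v(x) = \sqrt{2/(\pi x)}\cos(x - v\pi/2 - \pi/4) + O(x^{-3/2})$, one compares $\sqrt{y}\,J_v(j_{i,v}y)$ with trigonometric factors and shows that $\sqrt{yz}\,K_M(y,z)$ differs from the classical Dirichlet kernel $\sin\!\big((M+c)\pi(y-z)\big)\big/\big(\pi(y-z)\big)$ (plus a harmless reflected term centred at $y+z$) by an error that is uniformly integrable in $z$ on compact subsets of $(0,1)$. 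Granting this, the localization principle applies: the part of $\int K_M(y,z)f(z)z\,dz$ coming from $z$ outside a fixed neighbourhood of $y$ tends to $0$ by Riemann--Lebesgue, while the local part converges to $f(y)$ by the Dirichlet--Jordan test --- this is exactly where continuity of $f$ together with finite total variation on $(a,b)$ enters, via the second mean value theorem.

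The main obstacle is making the kernel comparison of the previous paragraph quantitative enough to pass to the limit term by term: the Bessel asymptotics degrade as $y\to 0$, which is the structural reason the statement requires $(a,b)\subset(0,1)$ and cannot reach the hard edge at $0$; the zeros $j_{i,v}$ are not exactly equally spaced, so the effective frequencies in $K_M$ drift and the error $\sum_i\big(j_{i,v}-\pi(i+v/2-1/4)\big)$ must be summed against the bounded-variation data; and the remainder in $J_v - \sqrt{2/(\pi x)}\cos(\cdots)$ must be controlled uniformly on $[a,1]$. All of this is precisely the content of \cite[Sec.~18]{Wat}, so the cleanest route, once the orthogonality and the normalization $\int_0^1 \tilde J_{i,v}^2\,y\,dy = \tfrac12$ above are recorded, is to invoke that reference; a self-contained proof would follow the comparison scheme just sketched, or alternatively pass through the Hankel-transform/contour-integral representation of $K_M$ and evaluate the relevant residues.
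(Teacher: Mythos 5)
Your proposal is correct and matches the paper's treatment: the paper offers no proof of this lemma, simply citing the classical Fourier--Bessel convergence theory in \cite[Sec.~18]{Wat}, which is exactly where you land after correctly recording the Sturm--Liouville orthogonality and the Lommel normalization $\int_0^1 \tilde J_{i,v}(y)^2\,y\,dy = \tfrac12$ that accounts for the factor $2$. Your sketch of the Dirichlet-kernel comparison for the pointwise convergence is the standard route taken in that reference, so nothing further is needed.
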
 

\subsection{Asymptotics of Orthogonal Polynomials at the Hard Edge}
This section will be devoted to the proof of Proposition \ref{prop:Q_norm_hard_edge}, which will give us the necessary tools to compute $Q_m^{(k)}$ evaluated at the smallest roots of the Laguerre polynomials as $N \to \infty$. As will be demonstrated, one of the steps in this proof is to replace the root $l_{k+1-r,k}$ of a Laguerre function by the corresponding root of a Bessel function that describes its asymptotics. To ensure this correspondence holds regardless of the sign of $k-N$ define
\[j_{r+v,-v} = j_{r,v}\]
whenever $v, r > 0$. Indeed, $J_{-v}(x) = (-1)^v J_{v}(x)$ so the values $j_{r+v,-v}$ will all be roots of $J_{-v}(x)$. We also set $j_{r+v,-v} = 0$ if $0 \ge r > -v$ as the corresponding Laguerre polynomials will have $v$ roots equal to 0.
\label{sec:hard_edge_Q_asymp}.
\begin{proposition} \label{prop:Q_norm_hard_edge} Suppose $k,N$ tend to $\infty$ in such a way that $N-k$ equals a fixed constant $\alpha$. For any fixed positive integer $r > k - N$, one has
\begin{equation} \label{eq:asymptotic_Q_formula_hard_edge}
\frac{(-1)^m\tilde{Q}_m^{(k)}(l_{k+1 - r, k})}{N^{1/2}} = \frac{2}{j_{r,\alpha}} \cdot \frac{J_{\alpha}(j_{r,\alpha}\sqrt{1-\frac{m}{k}})}{J_{\alpha}'(j_{r,\alpha})} + O(N^{-1}),
\end{equation}
uniformly over all $0 \le m < N \wedge k$.
\end{proposition}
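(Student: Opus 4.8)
The plan is to apply steepest descent to the double contour integral from Proposition \ref{prop:Q_contour}, evaluated at $x = l_{k+1-r,k}$, the $r$-th smallest root of $L_k^{(N-k)}$. First I would use the classical hard-edge asymptotics for the small roots of the Laguerre polynomial: for fixed $r$ and $\alpha = N-k$ fixed, one has $l_{k+1-r,k} = \frac{j_{r,\alpha}^2}{4N} + O(N^{-2})$ (this is the standard approximation alluded to in the introduction, obtainable, e.g., from the Mehler--Heine formula $\lim_{k\to\infty} k^{-\alpha}L_k^{(\alpha)}(x/k) = x^{-\alpha/2}J_\alpha(2\sqrt{x})$). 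So the relevant scale for the spatial variable $x$ in $f(x,s) = \exp\!\left(\frac{x}{1-s}\right)(1-s)^{|k-N|}s^{k\wedge N}$ is $x \sim \frac{1}{4N}$; write $x = \frac{\rho^2}{4N}$ with $\rho$ close to $j_{r,\alpha}$.

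Next I would analyze the inner integral $\int_0^t \frac{f(x,s)s^{-m}}{s(1-s)}ds$ and the outer contour integral in $t$. With $k \wedge N = k$ large and $x = O(1/N)$, the exponential factor $\exp\!\left(\frac{x}{1-s}\right)$ is a small perturbation of $1$ away from $s=1$, but the combination $(1-s)^{|\alpha|}s^{k}$ carries the large parameter $k$. Substituting $m = k(1 - \tau^2)$ for $\tau \in [0,1]$ (so that $\sqrt{1-m/k} = \tau$ is the argument appearing in the target), the factor $s^{-m} s^{k} = s^{k\tau^2}$, and the saddle of $s^{k\tau^2}\exp\!\left(\frac{x}{1-s}\right)$ sits at a point where, after rescaling $1-s \approx \frac{\zeta}{\sqrt{k}}$ near $s=1$, the exponent collapses to the combination that generates a Bessel function. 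Concretely, I expect that after the change of variables $1-s = \frac{u}{2\sqrt{Nx}} \cdot \frac{1}{\sqrt{\cdot}}$ (chosen so $\frac{x}{1-s} + k\log s$ becomes $-\frac{u}{2} - \frac{\rho^2}{2u}\cdot(\text{const})$ type expression), the inner integral over $s$, together with the outer integral over $t$ with $t^{-m-1}$, telescopes — via a Hankel-type contour integral representation of $J_\alpha$ — into $J_\alpha(j_{r,\alpha}\tau)/$ (normalization). The outer $t$-integral, again with $t$ scaled as $1-t \sim 1/\sqrt{N}$, should contribute the normalizing factor, and in particular the condition that $x = l_{k+1-r,k}$ is a \emph{root} of $L_k^{(N-k)}$ is exactly what forces $\rho \to j_{r,\alpha}$ (the $r$-th zero of $J_\alpha$) in the limit, which is how the specific root appears. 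The factor $\frac{J_\alpha'(j_{r,\alpha})}{}$ in the denominator of \eqref{eq:asymptotic_Q_formula_hard_edge} then comes from the leading behavior of $J_\alpha$ near its zero, picked up when one accounts for the precise deviation $\rho - j_{r,\alpha} = O(1/N)$.

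After obtaining the pointwise (in $m$) leading asymptotics $(-1)^m Q_m^{(k)}(l_{k+1-r,k}) \sim c_{k,r} \cdot J_\alpha(j_{r,\alpha}\sqrt{1-m/k})$ for an explicit constant $c_{k,r}$, I would divide by the norm $\sqrt{\langle Q_m^{(k)}, Q_m^{(k)}\rangle_k}$ using the exact formula \eqref{eq:Q_norm}, $\langle Q_m^{(k)},Q_m^{(k)}\rangle_k = \frac{(k-m)_{m+1}(N-m)_{m+1}}{k+1}$. For this I need Stirling-type asymptotics of the Pochhammer symbols; note $(N-m)_{m+1}$ involves only finitely many factors all comparable to $N - $ (bounded), so this piece is $\sim N^{|\alpha|}\cdot(\text{const})$ up to the relevant order, while $(k-m)_{m+1}$ is the delicate one and must be handled uniformly in $m \in [0, N\wedge k)$. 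Combining the normalization constant with $c_{k,r}$ and the scaling $l_{k+1-r,k}\sim \frac{j_{r,\alpha}^2}{4N}$ should collapse everything to $\frac{2}{j_{r,\alpha}}\cdot\frac{J_\alpha(j_{r,\alpha}\sqrt{1-m/k})}{J_\alpha'(j_{r,\alpha})}$ with error $O(N^{-1})$.

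The main obstacle, and the place I would spend the most care, is obtaining the steepest descent estimate \emph{uniformly} in $m$ over the full range $0 \le m < N\wedge k$, i.e. for $\tau = \sqrt{1-m/k}$ ranging over all of $(0,1]$. Near $\tau = 1$ (i.e. $m$ small) the saddle point degenerates or collides with the boundary $s = 0$ of the allowed contour, and near $\tau = 0$ (i.e. $m$ close to $k$) the factor $s^{k\tau^2}$ loses its large-parameter character, so the saddle approaches $s = 1$ where $f(x,s)$ has its essential singularity — exactly the difficulty flagged in the remark after Proposition \ref{prop:Q_contour}. The resolution should be that at the \emph{hard} edge the essential singularity is tame enough: since $x = O(1/N) \to 0$, the exponential $\exp(x/(1-s))$ stays bounded on a contour that is allowed to approach $s=1$ at rate $\sim 1/\sqrt{N}$, so one can deform the contour toward the singularity as $N\to\infty$ and still control the integrand. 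I would handle the two regimes (small $m$ / large $m$) with separate contour choices and glue the estimates, checking that the $O(N^{-1})$ error bound survives uniformly; verifying the uniformity of the error term across this gluing is where most of the technical work lies. (At the soft edge the analogous $x = O(1)$ makes $\exp(x/(1-s))$ genuinely blow up, which is why that case is out of reach by this method.)
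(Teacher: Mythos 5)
Your framing matches the paper's: start from the double contour integral of Proposition \ref{prop:Q_contour} at $x=l_{k+1-r,k}$, use the hard-edge root asymptotics $l_{k+1-r,k}=\frac{j_{r,\alpha}^2}{4N}+O(N^{-7/4})$, normalize via the Pochhammer formula \eqref{eq:Q_norm}, and exploit the fact that $x=O(N^{-1})$ tames the essential singularity at $s=1$ so the contour can approach it. That last observation is indeed the crucial insight. However, the analytic core of your sketch has a genuine gap, and the scalings you propose point in the wrong direction. The correct local scale is $1-s\sim N^{-1}$, not $N^{-1/2}$: writing $t=1-v/N$, $s=1-u/N$ makes \emph{all} the competing terms $O(1)$ simultaneously, since $\frac{x}{1-s}\sim\frac{j_{r,\alpha}^2}{4u}$, $k\ln(t/s)\sim u-v$, and $m\ln t$ contributes the factor $e^{yv}$ with $y=m/k$. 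With your proposed $1-s\sim k^{-1/2}$ the term $k\ln s\sim-\sqrt{k}$ blows up while $x/(1-s)\to 0$, so nothing balances. Relatedly, there is no genuine saddle point here: the paper's contour is chosen so the integrand decays away from a single $O(N^{-1})$ neighborhood of the singularity at $1$, uniformly in $m$ (the $m$-dependence survives only as $e^{yv}$ with $y\in[0,1]$, controlled by a cone argument). Your proposed two-regime analysis with separate contours near $\tau=0$ and $\tau=1$, glued together, addresses a difficulty that does not arise in the correct setup.

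The second gap is the identification of the limit with $J_\alpha(j_{r,\alpha}\sqrt{1-y})$. You assert that a Hankel-type representation makes the integrals ``telescope'' into a Bessel function, but the limiting object is an iterated \emph{double} integral
$G(y)=\int\!\!\int \frac{\exp(F(v)-F(u)+yv)}{uv}\,du\,dv$ with $F(x)=\frac{j_{r,\alpha}^2}{4x}-x+\alpha\ln x$, and a single Schl\"afli/Hankel integral does not directly evaluate it. The paper instead shows that $(1-y)^{\alpha/2}G(y)$ is annihilated by an explicit second-order operator (a change of variables of the Bessel operator), using an integration by parts that relies on the root condition $L_k^{(N-k)}(l_{k+1-r,k})=0$ to permit traversing the inner contour in either direction; non-singularity at $y=1$ then forces the solution to be a multiple of $J_\alpha$. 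Finally, the normalizing constant $c$ is not produced by the steepest descent itself: it is pinned down a posteriori by passing the discrete orthogonality relation \eqref{eq:column_orthogonality} to its integral limit and invoking the Fourier--Bessel normalization of Lemma \ref{lem:Tit46_laguerre_expansion}. Your sketch omits both of these steps, so as written the proposal frames the problem correctly but does not contain a proof.
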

\begin{proof}
The cases for $k \le N$ and $k > N$ are nearly identical, so we will proceed by first giving an in depth proof when $k \le N$ and then briefly remarking on what changes one needs to make for the other case. Assuming $k\le N$, \eqref{eq:Q_norm} and Proposition \ref{prop:Q_contour} together imply
\begin{equation} \label{eq:Q_normalized_asymp_1}
(-1)^m\tilde{Q}_m^{(k)}(x) =  -\frac{\sqrt{k+1}}{2\pi i}\sqrt{\frac{(k-m)_{m+1}}{(N-m)_{m+1}}} \oint \int_0^t \frac{f(x,s)s^{-m}}{f(x,t)t} \frac{dsdt}{s(1-s)t(1-t)},
\end{equation}
where
\[f(x,s) = \exp\left(\frac{xs}{1-s}\right)(1-s)^{N-k}s^k\]
and the contour is a positively oriented curve containing 0 but not 1. The constant in front of the integral evaluates as
\begin{equation} \label{eq:hard_edge_product_term}
\sqrt{\frac{(k-m)_{m+1}}{(N-m)_{m+1}}} = \prod_{j=k}^{N-1} \left(1 - \frac{m+1}{j+1}\right)^{1/2} = (1-y)^{\alpha/2} + O(N^{-1}),
\end{equation}
where $y = \frac{m}{k}$. We will now focus our attention on the integral itself. Let $\gamma:[0,1] \to \C$ be a parametrization of the outer contour. Integrating by parts, we get
\begin{align}  \label{eq:airy_hard_edge_integration_by_parts}
\oint_{\gamma} \int_0^t \frac{f(x,s)s^{-m}}{f(x,t)t} \frac{dsdt}{s(1-s)t(1-t)} =& \oint_{\gamma} \frac{f(x,s)}{s^{m+1}(1-s)}ds \oint_{\gamma} \frac{1}{f(x,t)t(1-t)}dt \nonumber \\
&- \oint_{\gamma} \int_{\gamma(0)}^{t} \frac{f(x,t)t^{-m}}{f(x,s)s} \frac{dsdt}{s(1-s)t(1-t)},
\end{align}
where the inner integral of the second term runs along $\gamma$ from $\gamma(0)$ to $t$. As $\frac{f(x,s)}{s^{m+1}(1-s)}$ has no poles inside $\gamma$, the first term vanishes. Now assume $x = l_{k+1-r,k}$. The standard contour integral identity for the Laguerre polynomials yields
\[
\oint_{\gamma} \frac{du}{f(l_{k+1-r,k},s)(1-u)u} = L^{(N-k)}_k(l_{k+1-r,k}) = 0.
\]
This identity implies that for the inner contour of the final term in \eqref{eq:airy_hard_edge_integration_by_parts}, we can travel along either the positive or negative direction of $\gamma$. This will be important when selecting the curve $\gamma$.
\par 
For fixed $\nu$, the Laguerre polynomials satisfy the following asymptotic relation as $n\to\infty$ \cite[p. 199]{Sze}: 
\[e^{-x/2}x^{\nu/2}L_n^{(\nu)}(x) = M^{-\nu/2}\frac{\Gamma(n+\nu + 1)}{\Gamma(n+1)}J_{\nu}(2(Mx)^{1/2})+ O(n^{\nu/2-3/4}),\]
where $M = n + \frac{\nu+1}{2}$ and for any fixed constant $w$ the bound holds uniformly for $0 < x < w$. As all roots of $L^{\nu}_n(x)$ are positive, one obtains by way of Rouche's theorem the following correspondence between roots of Laguerre polynomials and Bessel functions:
\begin{equation} \label{eq:lag_root_hard}
l_{k+1-r,k} = \frac{j_{r,\alpha}^2}{4N} + O(N^{-7/4}).
\end{equation}
Expanding out $f(l_{k+1-r,k},s)$ using this asymptotic formula, we get that the integrand of \eqref{eq:airy_hard_edge_integration_by_parts} equals
\begin{equation} \label{eq:hard_edge_Q_full_integrand} \frac{\exp\left(k\ln\left(\frac{t}{s}\right) + \alpha\ln\left(\frac{1-t}{1-s}\right) + \frac{j_{r,\alpha}^2}{4N}\left(\frac{t}{1-t} - \frac{s}{1-s}\right) - m\log(-t) + O(N^{-7/4})\right)}{s(1-s)t(1-t)}.
\end{equation} 
Our approach is steepest descent, so we would like to select a contour for which this decays exponentially in $N$ outside of a small portion of $\gamma$. To this end, select $\gamma$ to be a curve which first travels from -2 to $1 - N^{-1} + e^{\frac{-\pi i}{4}}$ with $|t|$ strictly decreasing, then travels from $1 - N^{-1} + e^{\frac{-\pi i}{4}}$ to $1 - N^{-1}$ in a straight line, and lastly follows the reflection of the first two segments over the real line back to -2. Per our earlier discussion, the inner contour will travel along the shorter arc of $\gamma$ from -2 to $t$.
\par 
Indeed, this choice of contour ensures that outside of an $O(N^{-1})$ ball around 1, we have $1 < |t| < |s|$ and thus the real parts of $k\ln\left(\frac{t}{s}\right), -m \log(-t)$ are negative. The only remaining term non-negligible outside of the $N^{-1}$ scale is $\alpha \ln\left(\frac{1-t}{1-s}\right)$. However, this is dominated by the other logarithmic term at the constant scale and has negative real part at any smaller scale as the curve satisfies $|1-t| < |1-s|$. Hence, all asymptotically non-vanishing terms of \eqref{eq:hard_edge_Q_full_integrand} will be captured by a Taylor expansion about 1. As such, define
\[t = 1 - \frac{v}{N}, \qquad s = 1 - \frac{u}{N}.\]
Asymptotically, \eqref{eq:airy_hard_edge_integration_by_parts} Taylor expands out to
\begin{equation} 
\label{eq:hard_edge_Q_integral_asymptotic_formula}
G(y) := \int_{1+e^{-\frac{ 3\pi i}{4}}\infty}^{1+e^{\frac{3 \pi i}{4}}\infty} \int_{1 + e^{\pm \frac{3 \pi i}{4}}\infty}^{v} \left( \frac{\exp(F(v) - F(u) + yv)}{uv}\right)du dv,
\end{equation}
where
\[F(x) = \frac{j_{r, \alpha}^2}{4x} - x + \alpha \ln(x).\]
The notation used for the lower bound of the inner integral is shorthand indicating that the contour approaches $v$ from $1 + e^{ -\frac{3 \pi i}{4}}\infty$ if $\text{Im}(v) < 0$ and $1 + e^{\frac{3 \pi i}{4}}\infty$ otherwise. We also note that as our Taylor expansion is of order $N^{-1}$, $G(y)$ will differ from \eqref{eq:airy_hard_edge_integration_by_parts} by a factor of the same order.
\par 
It is not immediately clear that $\tilde{Q}_m^{(k)}(l_{k+1-r,k})$ has a limiting value, so it still remains to see that $G(y)$ is well-defined. Integrating by parts,
\[\int_{1+e^{-\frac{3\pi i}{4}}}^v \frac{\exp(-F(u))}{u}du = \frac{v\exp(-F(v))}{v-v^2F'(v)} - \int_{1+e^{-\frac{3\pi i}{4}}}^v \frac{d}{du}\left[\frac{1}{u-u^2F'(u)}\right]u\exp(-F(u))du.\]
As the integration contour stays away from 0, both terms on the right-hand side are bounded in magnitude by a factor of order $|v|^{-\alpha - 1}$. Hence, $G(y)$ is bounded by a constant multiple of 
\[\int_{1+e^{\frac{3\pi i}{4}}\infty}^{1} \left| \frac{\exp(yv)}{v^2} \right|dv + \int_{1+e^{-\frac{3\pi i}{4}}\infty}^{1} \left| \frac{\exp(yv)}{v^2} \right|dv.\]
Consider the open cone
\[ \mathcal{C} = \left\{z \ : \ \text{arg}(z) \in \left(-\frac{\pi}{4}, \frac{\pi}{4}\right)\right\}.\]
If $y \in \mathcal{C}$, then as the $v$ moves away from $1$ along the contour $\exp(yv)$ decays exponentially. As such, over $\mathcal{C}$, $G(y)$ is holomorphic, non-singular, and we can differentiate it under the integral sign. For $y$ on the boundary of $\mathcal{C}$, $\exp(yv)$ remains bounded in magnitude so $G(y)$ will remain well-defined and thus exists for all $y \in [0,1]$. These properties will be the key to identifying it with desired expression. 
\par 
Indeed, define the differential operator
\begin{equation} \label{eqn:Operator_D}
\mathcal{D} = (1-y)^2 \frac{\partial^2}{\partial y^2} - (1-y)\frac{\partial}{\partial y} + \left(\frac{j_{r,\alpha}^2}{4}(1-y)-\frac{\alpha^2}{4}\right).
\end{equation}
Applying $\mathcal{D}$ to $(1-y)^{\alpha/2}G(y)$ yields
\[
(1-y)^{\frac{\alpha}{2}+1}\int_{1+e^{-\frac{ 3\pi i}{4}}\infty}^{1+e^{\frac{3 \pi i}{4}}\infty} \int_{1 + e^{\pm \frac{3 \pi i}{4}}\infty}^{v}  \frac{\exp(F(v) - F(u) + yv)}{uv} \left(\frac{j_{r,\alpha}^2}{4} - (\alpha + 1)v + (1-y)v\right)du dv,
\]
which may be more succinctly written as
\[
(1-y)^{\frac{\alpha}{2}+1}\int_{1+e^{-\frac{ 3\pi i}{4}}\infty}^{1+e^{\frac{3 \pi i}{4}}\infty} \left(\int_{1 + e^{\pm \frac{3 \pi i}{4}}\infty}^{v}  \frac{\exp(F(u))}{u}du\right)\frac{\partial}{\partial v} \left(v\exp(F(v) + yv)\right) dv.
\]
Using integration by parts to move the derivative in front of the inner integral yields
\[(1-y)^{\frac{\alpha}{2}+1}\int_{1+e^{-\frac{ 3\pi i}{4}}\infty}^{1+e^{\frac{3 \pi i}{4}}\infty} \exp(yv) dv,\]
which vanishes. The operator $\mathcal{D}$ can be identified with a transformation of the Bessel differential operator \eqref{eqn:Bessel_DE}. More exactly, the vanishing of $\mathcal{D}(1-y)^{\alpha/2}G(y)$ implies there exists a $g(x)$ satisfying \eqref{eqn:Bessel_DE} with
\begin{equation} \label{eqn:kernel_D}
(1-y)^{\alpha/2}G(y) = g(j_{r,\alpha}\sqrt{1-y}). 
\end{equation}
As the left-hand side is additionally non-singular at $y = 1$, the function $g(x)$ must be a constant multiple of $J_{\alpha}(x)$. 
\par 
In total, we have now shown
\begin{align} 
(-1)^mN^{-1/2}\tilde{Q}_m^{(k)}(x)
&= cJ_{\alpha}(j_{r,\alpha}\sqrt{1-y}) + O(N^{-1}). \label{eq:hard_edge_final_eq_wo_constant}
\end{align}
All that remains is to evaluate the constant $c$. Since \eqref{eq:hard_edge_final_eq_wo_constant} holds uniformly for $y\in [0,1]$, the orthogonality of columns relation \eqref{eq:column_orthogonality} becomes an integral relation in the limit:
\[c^2 \int_0^1 J_{\alpha}(j_{r,\alpha}\sqrt{1-y})^2 = \frac{4}{j_{r,\alpha}^2}.\]
Setting $f(y) = \tilde{J}_{i,v}(z)$ in Lemma \ref{lem:Tit46_laguerre_expansion} gives the equality
\[2\int_0^1  \frac{J_{v}(j_{i,v}z)^2}{J_v'(j_{i,v})^2}zdz = 1.\]
From here, a simple transformation of variables allows us to compute $c$ to be the desired value.
\par 
This completes the proof of \eqref{eq:asymptotic_Q_formula_hard_edge} for $k \le N$. For $k > N$ we instead get
\[ (-1)^m\tilde{Q}_m^{(k)}(x) =  -\frac{\sqrt{k+1}}{2\pi i}\sqrt{\frac{(N-m)_{m+1}}{(k-m)_{m+1}}} \oint \int_0^t \frac{f(x,s)s^{-m}}{f(x,t)t} \frac{dsdt}{s(1-s)t(1-t)} \]
where
\[f(x,s) = \exp\left(\frac{xs}{1-s}\right)(1-s)^{k-N}s^N.\]
As $k-N$ is a non-negative constant, $f(x,s)$ still has poles at the relevant points and our computations involving it remain unaffected. Similarly, we have  
\[\sqrt{\frac{(N-m)_{m+1}}{(k-m)_{m+1}}} = (1-y)^{-\alpha/2} + O(N^{-1})\]
preserving \eqref{eq:hard_edge_product_term} and our reasoning as $-\alpha > 0$. Finally, on account of the formula \eqref{eq:laguerre_neg_alpha} for $L_{N}^{(\alpha)}(x)$, the asymptotics of $l_{k+1-r,k}$ are still given by \eqref{eq:lag_root_hard} under the sustitution $-\alpha \to \alpha$ . As these exhaust the differences that arise, our argument still holds for $k > N$.
\end{proof}

\subsection{Proof of the Main Result}
\label{sec:proof_main_hard_edge}
\begin{proof}[Proof of Theorem \ref{thm:main_hard_edge}]
The method here will simply be to apply our asymptotic results to the orthogonal polynomial decomposition of the covariances in Proposition \ref{prop:covariance_summation}. We begin by dealing with the latter summation from the proposition. To shorten our expressions we also set $k = N + s, k' = N + t$ and assume $s \ge t$. Using \eqref{eq:hard_edge_product_term} we can first simplify the product terms of the summand to get
\begin{align*}
\sum_{r=k}^{n-1} \frac{1}{r+1} \sum_{m=0}^{k'\wedge N - 1} \frac{\tilde{Q}_m^{(k)}(l_{k+1-a,k})\tilde{Q}_m^{(k')}(l_{k'+1-b,k'})}{\sqrt{(k+1)(k'+1)}}(1-y)^{r-\frac{k+k'}{2}} + O(N^{-1}),
\end{align*}
where $y = \frac{m+1}{N}$. The only $r$ dependent terms are $\frac{1}{r+1}$, which is asymptotically $\frac{1}{N}$, and $(1-y)^{r-\frac{k+k'}{2}}$. Interchanging the summations and simplifying thus yields
\[O(N^{-1}) + \frac{1}{N} \sum_{m=0}^{k'\wedge N - 1} \frac{\tilde{Q}_m^{(k)}(l_{k+1-a,k})\tilde{Q}_m^{(k')}(l_{k'+1-b,k'})}{\sqrt{(k+1)(k'+1)}} \left(\frac{(1-y)^{|s-t|/2}-(1-y)^{n-\frac{k_1+k_2}{2}}}{y}\right).\]
Analogous reasoning applied to the first summation of Proposition \ref{prop:covariance_summation} gives
\[ O(N^{-1}) + \frac{1}{N} \sum_{m=0}^{k'\wedge N - 1} \frac{\tilde{Q}_m^{(k)}(l_{k+1-a,k})\tilde{Q}_m^{(k')}(l_{k'+1-b,k'})}{\sqrt{(k+1)(k'+1)}} \left(\frac{(1-y)^{n-\frac{k_1+k_2}{2}}}{y}\right). \]
Thus altogether we get
\[\Xi(a,s,b,t) = 2l_{k+1-a,k}l_{k'+1-b,k'}\left(O(N^{-1}) + \frac{1}{N} \sum_{m=0}^{k'\wedge N - 1} \frac{\tilde{Q}_m^{(k)}(l_{k+1-a,k})\tilde{Q}_m^{(k')}(l_{k'+1-b,k'})}{\sqrt{(k+1)(k'+1)}} \frac{(1-y)^{|s-t|/2}}{y}\right).\]
Finally, replacing $l_{k+1-a,k}$ and the orthogonal polynomials with their respective asymptotic expressions \eqref{eq:lag_root_hard}, \eqref{eq:asymptotic_Q_formula_hard_edge} gives the Riemann sum
\[\frac{j_{a,s}j_{b,t}}{2N^3} \sum_{m=0}^{k'\wedge N - 1} \left(\frac{J_{s}(j_{a,s}\sqrt{1-y})J_{t}(j_{b,t}\sqrt{1-y})}{J_{s}'(j_{a,s})J_{t}'(j_{b,t})} + O(N^{-1})\right) \frac{(1-y)^{|s-t|/2}}{y}.\]
which is easily identified with the desired expression.
\end{proof}

\subsection{Additive Polymer Limit}
\label{sec:additive_polymer_limit}
This section will be devoted to the proof of our secondary result. As explained in the introduction, this result demonstrates that the limiting Gaussians from Theorem \ref{thm:main_hard_edge} are equal to the partition function of a certain additive polymer arising from the Bessel-$\infty$ random walk. However, there is alternative interpretation of this result. To describe this, recall how the argument used in this paper proceeded: first we demonstrated that for all relevant $a,k$ the random variable $\xi_{a,k}$ equals the partition function of an additive polymer arising from a random walk on the roots of Laguerre polynomials; following that, we used a diagonalization of the random walk to express the covariances $\cov(\xi_{a,k}, \xi_{b,k'})$ in terms of orthogonal polynomials; and lastly, we used asymptotic techniques to take the $N \to \infty$ limit of said expression. We will prove that the additive polymer described by Theorem \ref{thm:secondary_polymer_result} arises by taking the large $N$ limit earlier in this sequence. In particular, as $N\to\infty$ the random walk on the roots of the Laguerre converges to the Bessel-$\infty$ random walk.

\begin{definition} (Bessel-$\infty$ Random Walk) Let $S_v = \{j_{i,v}\}_{i=1}^{\infty}$ where $v$ is some integer. The Bessel-$\infty$ Random Walk is a Markov chain with state space at time $t$ given by $S_{\alpha - t}$ for some $\alpha$ and transition probability from $j_{a,v}$ to $j_{b,v-1}$ given by
\[P^{v,v-1}(a \to b) = \frac{4j_{a,v}^2}{(j_{a,v}^2 - j_{b,v-1}^2)^2}.\]
In the case that $j_{a,v} = 0$, we instead transition to one of the 0 values of $S_{v-1}$ with uniform probability. For $v_2 < v_1$, we also define $P^{v_1, v_2}$ to be given by composing the transition matrices $P^{v_1,v_1-1}, \cdots, P^{v_2+1,v_2}$.
\end{definition}

Applying the asymptotic formula \eqref{eq:lag_root_hard} for $l_{k+1-i,k}$ to our formulas \eqref{eq:weight_evaluation}, \eqref{eq:laguerre_trans_prob_one_level} for the transition probabilities $\alpha_{a,b}^k$ yields
\begin{equation} \label{eq:diffusive_kernel_limit}
P^{v, v-1}(a \to b) = \lim_{N\to\infty} K^{N-v, N-v+1 }(N+1-v-a \to N+2-v-b). 
\end{equation}
Thus the single level transition probabilities for the Bessel-$\infty$ process are the exactly limit of the corresponding probabilities for the jumping process of Section \ref{sec:additive_polymer}. To show that this relation can be extended to $P^{v_1,v_2}$ for any $v_1, v_2$ rather than just $v_2 = v_1 - 1$, we need the following lemma demonstrating that the Bessel-$\infty$ process is indeed a well-defined Markov chain. A similar result relating transition probabilities from $S_v$ to itself can be found in \cite{Cal}.
\begin{lemma} For all positive integers $a$ and integers $v$ with $j_{a,v} \neq 0$ the following holds:
\[\sum_{b=1}^{\infty} \frac{4j_{a,v}^2}{(j_{a,v}^2 - j_{b,v-1}^2)^2} = 1.\]
\end{lemma}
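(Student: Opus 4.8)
The plan is to read off the identity from the classical Mittag--Leffler (partial fraction) expansion of the ratio of consecutive Bessel functions,
\[
\frac{J_v(z)}{J_{v-1}(z)} = \sum_{b=1}^\infty \frac{2z}{j_{b,v-1}^2 - z^2},
\]
(see \cite{Wat}). I would first establish this expansion in a form valid for \emph{every} integer $v$, including the non-positive ones where $J_{v-1}$ (and possibly $J_v$) has a zero at the origin: taking the logarithmic derivative of the Hadamard product $J_{p}(z) = \frac{(z/2)^p}{p!}\prod_{b\ge1}(1-z^2/j_{b,p}^2)$ gives $\tfrac{J_{v-1}'(z)}{J_{v-1}(z)} = \tfrac{v-1}{z} - \sum_b \tfrac{2z}{j_{b,v-1}^2-z^2}$, where, after the reduction $J_{-m}=(-1)^mJ_m$, the $m$ terms with $j_{b,v-1}=0$ contribute exactly the $-2m/z$ coming from the order-$m$ zero of $J_{v-1}$ at $0$; comparing this with the recurrence $J_{v-1}'(z) = \tfrac{v-1}{z}J_{v-1}(z) - J_v(z)$ yields the displayed expansion.

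Next I would divide both sides by $z$, differentiate the resulting series term by term in $z$, and multiply back by $z$, obtaining
\[
z\frac{d}{dz}\!\left[\frac1z\cdot\frac{J_v(z)}{J_{v-1}(z)}\right] = \sum_{b=1}^\infty \frac{4z^2}{(z^2 - j_{b,v-1}^2)^2}.
\]
The termwise differentiation is legitimate on a neighbourhood of $j_{a,v}$: the asymptotics $j_{b,v-1}=b\pi+O(1)$ make $\sum_b (j_{b,v-1}^2-z^2)^{-1}$ and $\sum_b (j_{b,v-1}^2-z^2)^{-2}$ absolutely and locally uniformly convergent away from the points $\pm j_{b,v-1}$, and $j_{a,v}$ is not one of these (consecutive integer-order Bessel functions share no positive zero, since otherwise $J_v$ and $J_v'$ would vanish simultaneously at a regular point of the Bessel ODE). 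It then remains to evaluate the left-hand side at $z=j_{a,v}$. Since $J_v(j_{a,v})=0$, the product rule leaves only $\tfrac{1}{j_{a,v}}\cdot\tfrac{J_v'(j_{a,v})}{J_{v-1}(j_{a,v})}$, and the recurrence $J_v'(z) = J_{v-1}(z) - \tfrac vz J_v(z)$ gives $J_v'(j_{a,v}) = J_{v-1}(j_{a,v})$, so this equals $\tfrac{1}{j_{a,v}}$ and the whole expression equals $1$. Substituting $z = j_{a,v}$ into the series above and using $(z^2 - j_{b,v-1}^2)^2 = (j_{a,v}^2 - j_{b,v-1}^2)^2$ finishes the proof.

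I do not expect a genuine obstacle here; the two points that require actual care are (i) verifying that the Mittag--Leffler expansion of $J_v/J_{v-1}$, with the conventions $J_{-m}=(-1)^mJ_m$ and $j_{b,-m}=0$ for $1\le b\le m$, is still correct when $v\le 0$ — this is precisely the content of the first paragraph — and (ii) the justification of termwise differentiation in the second paragraph. Both are routine given the standard Bessel asymptotics and Hadamard factorization, but should be spelled out. (One could alternatively try to deduce the identity by passing to the limit in the relation $\sum_b \alpha_{a,b}^{N-v} = 1$ for the stochastic matrices of Section~\ref{sec:additive_polymer}, but that would require a dominated-convergence argument controlling the tails as the number of columns grows, which does not seem simpler than the direct computation above.)
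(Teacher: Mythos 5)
Your proposal is correct and follows essentially the same route as the paper: both start from the Mittag--Leffler expansion of $J_v/J_{v-1}$, differentiate it termwise, evaluate at $z=j_{a,v}$ using $J_v'(j_{a,v})=J_{v-1}(j_{a,v})$, and handle non-positive orders via the reduction $J_{-m}=(-1)^mJ_m$ together with the three-term recurrence. Your variant of differentiating $z^{-1}J_v(z)/J_{v-1}(z)$ instead of the ratio itself, and your explicit justification of termwise differentiation, are only cosmetic refinements of the paper's argument.
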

\begin{proof} Suppose $v \ge 0$. By \cite{Wat} Section 15.41, the following Mittag-Leffler expansion holds:
\[-\frac{J_{v}(x)}{J_{v-1}(x)} = \sum_{r=1}^{\infty} \frac{2x}{x^2 - j_{r,v-1}^2}.\]
Taking the derivative with respect to $x$,
\[\frac{J_{v-1}'(x)J_{v}(x) - J_{v}'(x)J_{v-1}(x)}{J_{v-1}(x)^2} = \sum_{r=1}^{\infty} \left(\frac{2}{x^2 - j_{r,v-1}^2} - \frac{4x^2}{(x^2 - j_{r,v-1}^2)^2}\right).\]
Plugging $x=j_{a,v}$ into both equations and using the identity $J_{v}'(j_{a,v}) = J_{v-1}(j_{a,v})$ gives the desired. For $v < 0$, we need to make a small adjustment on account of the $r$ for which $j_{r,-v} = 0$. Applying
\[2vJ_v(x) = x(J_{v-1}(x) + J_{v+1}(x))\]
to the Mittag-Leffler expansion yields
\[\frac{J_{v-1}(x)}{J_v(x)} = \sum_{r=1}^{\infty} \frac{2x}{x^2 - j_{r,-v}^2}.\]
From here we can proceed as before.
\end{proof}

\begin{proposition} \label{prop:additive_polymer_transition_probs} For positive integers $a,b$ and integers $v_1 \ge v_2$ with $j_{b,v_2} \neq 0$, 
\begin{equation} \label{eq:additive_polymer_transition_probs}
P^{v_1,v_2}(a \to b) = \frac{j_{a,v_1}}{j_{b,v_2}}\int_0^1\tilde{J}_{a,v_1}(\sqrt{1-y})\tilde{J}_{b,v_2}(\sqrt{1-y})(1-y)^{|v_1 - v_2|/2}dy,
\end{equation}
and thus one has 
\begin{equation} \label{eq:general_diffusion_kernel_limit}
P^{v_1, v_2}(a \to b) = \lim_{N\to\infty} K^{N-v_1, N-v_2}(N+1-v_1-a \to N+1-v_2-b).
\end{equation}
\end{proposition}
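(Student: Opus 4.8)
The plan is to prove the explicit formula \eqref{eq:additive_polymer_transition_probs} and the limit identity \eqref{eq:general_diffusion_kernel_limit} together: first I would compute $\lim_{N\to\infty}K^{N-v_1,N-v_2}(N+1-v_1-a\to N+1-v_2-b)$ by hand, obtaining exactly the right-hand side of \eqref{eq:additive_polymer_transition_probs}, and then I would identify that right-hand side with $P^{v_1,v_2}(a\to b)$ by induction on $v_1-v_2$. Before doing either, I would dispose of the degenerate case: if $j_{a,v_1}=0$ then $a\le -v_1$, the Bessel-$\infty$ walk started at a zero state stays among zero states, so $P^{v_1,v_2}(a\to b)=0$ when $j_{b,v_2}\neq 0$, and the right-hand side of \eqref{eq:additive_polymer_transition_probs} also vanishes. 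Hence one may assume $j_{a,v_1}\neq 0$, and then together with the standing hypothesis $j_{b,v_2}\neq 0$ the hypotheses of Proposition \ref{prop:Q_norm_hard_edge} hold for every orthogonal polynomial that appears.

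For the limit computation I would set $k=N-v_1$, $r=N-v_2$ (so $r\ge k$) and expand the kernel by Lemma \ref{lem:diffusive_kernel_expansion} into a single sum over $m$. Substituting \eqref{eq:lag_root_hard} for $l_{k+1-a,k}$, the asymptotics \eqref{eq:asymptotic_Q_formula_hard_edge} for $\tilde{Q}_m^{(k)}(l_{k+1-a,k})$ and $\tilde{Q}_m^{(r)}(l_{r+1-b,r})$, and \eqref{eq:hard_edge_product_term} for $\prod_{j=k}^{r-1}(1-\tfrac{m+1}{j+1})^{1/2}$ turns the $m$-sum into a Riemann sum of mesh $1/N$; the bookkeeping is the same as for the second summation in the proof of Theorem \ref{thm:main_hard_edge}, and the uniform $O(N^{-1})$ error in Proposition \ref{prop:Q_norm_hard_edge} makes the passage to the integral legitimate. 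The result is that the limit exists and equals the right-hand side of \eqref{eq:additive_polymer_transition_probs}, which I denote $R^{v_1,v_2}(a\to b)$; in particular \eqref{eq:general_diffusion_kernel_limit} will follow once $R^{v_1,v_2}=P^{v_1,v_2}$ is shown.

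To prove $R^{v_1,v_2}=P^{v_1,v_2}$ I would induct on $v_1-v_2$. The base case $v_1-v_2=1$ is immediate, since \eqref{eq:diffusive_kernel_limit} identifies the same limit with $P^{v,v-1}(a\to b)$. For the inductive step, because $P^{v_1,v_2}$ is by definition the composition $P^{v_1,v_1-1}P^{v_1-1,v_1-2}\cdots P^{v_2+1,v_2}$, it is enough to verify that $R$ obeys the Chapman--Kolmogorov relation $R^{v_1,v_2}(a\to c)=\sum_b R^{v_1,v_1-1}(a\to b)\,R^{v_1-1,v_2}(b\to c)$; together with the base case and the hypothesis $R^{v_1-1,v_2}=P^{v_1-1,v_2}$ this forces $R^{v_1,v_2}=P^{v_1,v_2}$. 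To check the relation I would substitute the two integral formulas, change variables $z=\sqrt{1-y}$ in each so that a weight $(1-y)^{s/2}\,dy$ turns into $2z^{s+1}\,dz$ on $(0,1)$, and observe that the factors $j_{b,v_1-1}$ arising from $R^{v_1,v_1-1}(a\to b)$ and $R^{v_1-1,v_2}(b\to c)$ cancel, leaving the summand proportional to the product of the $b$-th Fourier--Bessel coefficients (of order $v_1-1$) of the two smooth functions $z\mapsto z\,\tilde{J}_{a,v_1}(z)$ and $z'\mapsto (z')^{v_1-v_2-1}\tilde{J}_{c,v_2}(z')$. Summing over $b$, the completeness of the Fourier--Bessel system supplied by Lemma \ref{lem:Tit46_laguerre_expansion} (and its attendant Parseval identity) collapses this to $\tfrac12$ of the $L^2((0,1),z\,dz)$ inner product of these two functions, which after undoing the substitution is exactly $R^{v_1,v_2}(a\to c)$.

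The step I expect to be the main obstacle is making this last collapse rigorous. Two ingredients are needed: the interchange of the infinite sum over $b$ with the integral, and the applicability of Lemma \ref{lem:Tit46_laguerre_expansion}. The first should follow from absolute convergence --- the coefficients $R^{v_1,v_1-1}(a\to b)=P^{v_1,v_1-1}(a\to b)$ are nonnegative and sum to $1$ by the lemma immediately preceding this proposition, while the Fourier--Bessel coefficients of the smooth function $z'\mapsto (z')^{v_1-v_2-1}\tilde{J}_{c,v_2}(z')$ decay rapidly. The second holds because in the inductive step $v_1-1\ge v_2$, so $v_1-v_2-1\ge 0$ and both functions being expanded are smooth on all of $[0,1]$, hence continuous of bounded variation as required. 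A last small point: when $v_1-1<0$ the level $v_1-1$ carries zero states, but a path joining the nonzero states $j_{a,v_1}$ and $j_{c,v_2}$ never visits one, so the sum over $b$ effectively runs over the nonzero roots of $J_{v_1-1}$ --- which are the roots of $J_{|v_1-1|}$ --- and Lemma \ref{lem:Tit46_laguerre_expansion} applies after the reindexing $j_{r+v,-v}=j_{r,v}$.
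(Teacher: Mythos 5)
Your proposal is correct and follows essentially the same route as the paper: the limit of the diffusion kernel is computed exactly as in the proof of Theorem \ref{thm:main_hard_edge}, and the identification with $P^{v_1,v_2}$ is reduced to a composition identity for the integral expression $R^{v_1,v_2}$, verified via the Fourier--Bessel expansion of Lemma \ref{lem:Tit46_laguerre_expansion}. The paper establishes the general composition identity $R^{v_1,v_3}=\sum_b R^{v_1,v_2}R^{v_2,v_3}$ in one stroke by expanding $j_{a,v_1}\tilde{J}_{a,v_1}(\sqrt{1-y})(1-y)^{(v_1-v_2)/2}$ in the Fourier--Bessel basis and iterating, rather than by your single-step induction with Parseval, but this is the same use of completeness and the two arguments are interchangeable.
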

\begin{proof}
Let $R^{v_1,v_2}(a \to b)$ be equal to the right-hand side of \eqref{eq:additive_polymer_transition_probs}. For brevity's sake also set $n_1 = N - v$ and $n_2 = N-v+1$. Recall from Lemma \ref{lem:diffusive_kernel_expansion},
\[K^{n_1, n_2}(n_1+1-a \to n_2+1-b) = \sum_{m=0}^{n_1 \wedge N - 1}\frac{l_{n_1+1-a,n_1}\tilde{Q}_m^{(n_2)}(l_{n_2+1-b,n_2})\tilde{Q}_m^{(n_1)}(l_{n_1+1-a,n_1})}{\sqrt{(n_1+1)(n_2+2)}}\left(1 - \frac{m+1}{n_1+1}\right)^{\frac12}.\]
Applying the same argument as used in the proof Theorem \ref{thm:main_hard_edge}, one gets
\[\lim_{N\to\infty} K^{n_1, n_2}(n_1+1 - a \to n_2+1 - b) = R^{v,v-1}(a\to b). \] 
In light of \eqref{eq:diffusive_kernel_limit}, the first part of the propsition follows for $v_2 = v_1 - 1$. This same argument applied more generally to any $v_1, v_2$ also equates $R^{v_1, v_2}$ with the limit of the diffusion kernels \eqref{eq:general_diffusion_kernel_limit} thus reducing the second part of the proposition to the first. It now suffices to prove the composition identity
\begin{equation}
\label{eq:additive_polymer_composition}
R^{v_1,v_3}(a \to c) = \sum_{v_2 = 0}^{\infty}R^{v_1,v_2}(a \to b)R^{v_2,v_3}(b \to c)
\end{equation}
for all integers $v_1 \ge v_2 \ge v_3$. For this we use Lemma \ref{lem:Tit46_laguerre_expansion}. Applying the transformation of variables $y \mapsto \sqrt{1-y}$ to the equation from the lemma one gets
\[f(\sqrt{1-y}) = \sum_{a = 0}^{\infty}\tilde{J}_{b,v_2}(\sqrt{1-y}) \int_0^1\tilde{J}_{b,v_2}(\sqrt{1-z})f(\sqrt{1-z})dz.\]
Making the substitution $f(y) = \tilde{J}_{a,v_1}(y) y^{v_1-v_2}$ yields the equality
\begin{equation} 
\label{eq:Bessel_in_Bessel_Basis}
j_{a,v_1}\tilde{J}_{a,v_1}(\sqrt{1-y})(1-y)^{(v_1-v_2)/2} = \sum_{b = 0}^{\infty}j_{b,v_2}\tilde{J}_{b,v_2}(\sqrt{1-y}) R^{v_1,v_2}(a\to b).
\end{equation}
From here we can multiply both sides by $(1-y)^{(v_2-v_3)/2}$ and iterate to get
\[
j_{a,v_1}\tilde{J}_{a,v_1}(\sqrt{1-y})(1-y)^{(v_1-v_3)/2} = \sum_{c=0}^{\infty} \left(\sum_{b = 0}^{\infty}  R^{v_1,v_2}(a \to b)R^{v_2,v_3}(b \to c)\right) j_{c,v_3}\tilde{J}_{c,v_3}(\sqrt{1-y}).
\]
This expression and \eqref{eq:Bessel_in_Bessel_Basis} give us two different ways of expanding the left-hand side in terms of the Fourier-Bessel series $\tilde{J}_{c,v_3}(\sqrt{1-y})$. Equating the coefficients of these two expressions gives precisely the equality \eqref{eq:additive_polymer_composition}, as desired.
\end{proof}
We now have all the tools necessary to prove our secondary result. Once again, the overarching idea is to apply the Fourier-Bessel expansion of Lemma \ref{lem:Tit46_laguerre_expansion}.
\begin{proof}[Proof of Theorem \ref{thm:secondary_polymer_result}]
As the Gaussians $\eta_{a,v}$ are independent,
\[ 
Cov(\zeta_{a,v_1}, \zeta_{b,v_2}) = \sum_{v = -\infty}^{\min(v_1, v_2)} \sum_{s=1}^{\infty} P^{v_1,v}(a \to s)P^{v_2,v}(b \to s) \frac{j_{s,v}^2}{2}.
\]
Applying Proposition \ref{prop:additive_polymer_transition_probs}, the right-hand side equals
\[ 
\frac{j_{a,v_1}j_{b,v_2}}{2}\sum_{v = -\infty}^{\min(v_1, v_2)} \sum_{s=1}^{\infty} \int_0^1 \tilde{J}_{a,v_1}(x)\tilde{J}_{s,v}(x) (1-x)^{(v_1-v)/2}dx \int_0^1 \tilde{J}_{b,v_2}(y)\tilde{J}_{s,v}(y) (1-y)^{(v_2-v)/2}dy.
\]
Interchanging the inner summation and outer integral this becomes
\[  
\frac{j_{a,v_1}j_{b,v_2}}{2}\sum_{v = -\infty}^{\min(v_1, v_2)} \int_0^1 \tilde{J}_{a,v_1}(x)(1-x)^{(v_1-v)/2} \left(\sum_{s=1}^{\infty}\tilde{J}_{s,v}(x) \int_0^1 \tilde{J}_{b,v_2}(y)\tilde{J}_{s,v}(y) (1-y)^{(v_2-v)/2}dy\right)dx,
\]
which we simplify using \eqref{eq:Bessel_in_Bessel_Basis} to get
\[
\frac{j_{a,v_1}j_{b,v_2}}{2}\sum_{v = -\infty}^{\min(v_1, v_2)} \int_0^1 \tilde{J}_{a,v_1}(x)\tilde{J}_{b,v_2}(x) (1-x)^{(v_1+v_2-2v)/2} dx.
\]
This interchange is justified as \eqref{eq:Bessel_in_Bessel_Basis} holds in $L^2[0,1]$ and the limit of an inner product equals the inner product of the limit. Finally, as our integrands are bounded, we can interchange the $v$ summation with the integral to get the desired value.
\end{proof}

\printbibliography

\end{document}